\theoremstyle{plain}
\newtheorem{theorem}{Theorem}[subsection]
\newtheorem{lemma}[theorem]{Lemma}
\newtheorem{corollary}[theorem]{Corollary}
\newtheorem{proposition}[theorem]{Proposition}
\theoremstyle{definition}
\newtheorem{definition}[theorem]{Definition}
\newtheorem{example}[theorem]{Example}
\newtheorem{problem}[theorem]{Problem}
\newtheorem{remark}[theorem]{Remark}
\newtheorem{notation}[theorem]{Notation}
\theoremstyle{remark}
\newtheorem*{claim}{Claim}
\numberwithin{figure}{section}
\numberwithin{equation}{section}
\DeclareMathOperator{\id}{id}
\DeclareMathOperator{\res}{res}
\DeclareMathOperator{\tr}{tr}
\DeclareMathOperator{\GL}{\mathbf{GL}}
\DeclareMathOperator{\SO}{\mathbf{SO}}
\DeclareMathOperator{\SL}{\mathbf{SL}}
\DeclareMathOperator{\nSL}{SL} 
\DeclareMathOperator{\End}{End}
\DeclareMathOperator{\Sp}{\mathbf{Sp}}
\DeclareMathOperator{\cl}{cl}
\DeclareMathOperator{\chr}{char}
\DeclareMathOperator{\zardim}{zardim}
\DeclareMathOperator{\Ker}{Ker}
\DeclareMathOperator{\alg}{alg}
\DeclareMathOperator{\divv}{div}
\DeclareMathOperator{\dd}{d}
\DeclareMathOperator{\Ad}{Ad}
\DeclareMathOperator{\adj}{ad}
\DeclareMathOperator{\Id}{Id}
\DeclareMathOperator{\Int}{Int}
\DeclareMathOperator{\RV}{RV}
\DeclareMathOperator{\rv}{rv}
\DeclareMathOperator{\tp}{tp}
\newcommand{\ACVF}{\mathrm{ACVF}}
\newcommand{\Rr}{{\mathcal R}}
\newcommand{\M}{{\mathcal M}}
\renewcommand{\O}{{\mathcal O}}
\newcommand{\N}{{\mathbb N}}
\newcommand{\R}{{\mathbb R}}
\newcommand{\Q}{{\mathbb Q}}
\def\ldim{\mathop{\rm ldim}\nolimits}
\def\acl{\mathop{\rm acl}\nolimits}
\def\Aut{\mathop{\rm Aut}\nolimits}
\def\RV{\mathop{\rm RV}\nolimits}
\def\rv{\mathop{\rm rv}\nolimits}
\newcommand{\Klamalg}{K_\lambda^{\alg}}
\newcommand{\Klampalg}{K_{\lambda'}^{\alg}}
\begin{document}

\title{On simple groups definable in some valued fields}

\author{Jakub Gismatullin}

\address{J.~Gismatullin, Instytut Matematyczny Uniwersytetu Wroc{\l}awskiego, pl. Grunwaldzki 2, 50-384 Wroc{\l}aw, Poland}

\email{jakub.gismatullin@math.uni.wroc.pl, https://gismat.math.uni.wroc.pl/}

\thanks{\noindent The first author is supported by the National Science Centre, Poland NCN grants no.  2014/13/D/ST1/03491 and 2017/27/B/ST1/01467.}

\author{Immanuel Halupczok}

\address{I.~Halupczok, Mathematisches Institut, Heinrich-Heine-Universit\"at D\"usseldorf, Mathematisch-Naturwissenschaftliche Fakult\"at, Universit\"atsstr.\ 1, 40225 D\"usseldorf, Germany}
\email{math@karimmi.de}

\thanks{The second author is supported by the research training group
\emph{GRK 2240: Algebro-Geometric Methods in Algebra, Arithmetic and Topology},
which is funded by the DFG}

\author{Dugald Macpherson}

\address{D.~Macpherson, School of Mathematics, University of Leeds, Woodhouse Lane, Leeds, LS2 9JT, UK}

\email{H.D.MacPherson@leeds.ac.uk}

\thanks{The third author is supported by the Engineering and Physical Sciences Research Council grant UKRI2391.}

\date{\today}

\keywords{henselian valued field, algebraic group, simple group, definable group, tame geometry}

\subjclass[2020]{Primary 03C60; Secondary 20G07, 12J10, 20E32.}

\begin{abstract}
We prove that non-abelian definable, definably simple groups in 1-h-minimal henselian valued fields are essentially already linear algebraic groups. Here, the group is assumed to live in the home sort. We have a similar result in pure algebraically closed valued fields of positive characteristic, under the additional assumption that the definable group is a subgroup of a linear algebraic group.
\end{abstract}

\maketitle

\tableofcontents

\section{Introduction}

In the model theory of fields, the notion of {\em definable set}
(solution set of a first order formula, possibly with parameters) provides an abstraction of that of {\em constructible set}. In the context of algebraically closed fields in the language of rings the notions coincide, whilst in real closed fields in the language of ordered rings, the definable sets are exactly the semi-algebraic sets. Definability has corresponding geometric content in other model-theoretically well-understood structures such as differentially closed fields, algebraically closed fields with an automorphism, separably closed fields, ${\mathbb Q}_p$, and algebraically closed valued fields. In all such structures, definable {\em groups} play a fundamental role, as measures of model-theoretic complexity, or of the interaction between different definable sets, or through definable group actions. Understanding the definable simple groups (or {\em definably simple} groups, that is, definable groups with no proper non-trivial definable normal subgroups) is often critical.

In this paper, we classify definably simple groups in the context of Henselian valued fields of characteristic 0, and, under a linearity assumption,  algebraically closed valued fields of characteristic $p$. In characteristic 0, our methods apply also to {\em 1-h-minimal} expansions of valued fields. This broad framework includes several contexts which have had extensive recent study: the `valued fields with analytic structure' of Cluckers-Lipshitz \cite{CL.analyt} stemming ultimately from
Denef and van den Dries \cite{denef}; and $T$-convex expansions of power-bounded o-minimal expansions of fields \cite{vdDries-Lewenberg}.  See Example~\ref{ex1h} for more detail on these settings. Our main result is the following.

\begin{theorem} \label{main} 
\begin{enumerate}
\item[(i)] Let $(K,v,\ldots)$ be a 1-$h$-minimal expansion of a henselian valued field of characteristic 0. Let $G$ be an infinite non-abelian group definable  in the structure $(K,v,\ldots)$ with universe a definable subset of $K^n$ for some $n$, and suppose that $G$ has no proper infinite definable normal subgroup.
Then there is a semisimple, almost $K$-simple
and $K$-isotropic linear algebraic $K$-group $\mathbf{H}$ and a group $G^*$ definably isomorphic to $G/Z(G)$ 
 such that 
$\mathbf{H}(K)^+\leq G^*\leq \mathbf{H}(K)$. 
\item[(ii)] Let $(K,v)$ be an algebraically closed valued field of characteristic $p>0$, and let $G\leq \SL_n(K)$
be an infinite non-abelian definable group which has no proper infinite definable normal subgroup. Then there is a semisimple, almost $K$-simple and $K$-isotropic linear algebraic $K$-group $\mathbf{H}$ such that
$G/Z(G)$ is definably isomorphic to $\mathbf{H}(K)$. 
\end{enumerate}
\end{theorem}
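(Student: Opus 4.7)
I would treat both parts in parallel, since both aim to identify $G/Z(G)$, up to definable isomorphism, with the $K$-points of a semisimple, almost $K$-simple, $K$-isotropic linear algebraic group $\mathbf{H}$, and then apply Tits' abstract structure theorem for such $\mathbf{H}(K)$ to obtain the sandwich $\mathbf{H}(K)^+\leq G^*\leq \mathbf{H}(K)$. A reduction common to both parts is available at the start: the center $Z(G)$ is a definable normal subgroup, so by hypothesis it is either finite or all of $G$; the latter is excluded since $G$ is non-abelian. Hence $Z(G)$ is finite, and passing to $G/Z(G)$ one may assume $G$ is centerless and strictly definably simple; a similar observation applied to the definable hull of the derived subgroup then shows $G$ must be perfect.

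For part (i) the central task is to produce the algebraic group $\mathbf{H}$ from the abstract definable group $G$. The 1-$h$-minimality of $(K,v,\ldots)$ supplies exactly the tameness one needs: cell decomposition, a well-behaved algebraic dimension, a Jacobian property and almost-everywhere differentiability of definable functions. Using these I would first equip $G$ with a definable $K$-analytic manifold structure in which multiplication and inversion are $K$-analytic on a large open set, thereby producing a Weil-style definable group chunk. I would then invoke, or adapt to the 1-$h$-minimal setting, a Hrushovski--Pillay type recognition theorem, obtaining a connected linear algebraic $K$-group $\mathbf{H}$ together with a definable homomorphism $G\to\mathbf{H}(K)$ with finite kernel and large image. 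Pulling back definable normal subgroups of $G$ to normal algebraic subgroups of $\mathbf{H}$, the definable simplicity of $G$ forces $\mathbf{H}$ to be semisimple and almost $K$-simple; isotropy over $K$ is automatic since $\mathbf{H}(K)$ contains the infinite non-abelian $G$. Tits' simplicity theorem for $\mathbf{H}(K)^+/Z(\mathbf{H}(K))$ together with the definable simplicity of the image of $G$ then yields the sandwich.

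For part (ii) the linearity hypothesis allows a more direct approach. Let $\mathbf{H}$ be the Zariski closure of $G$ in $\SL_n$ over $K$. In ACVF, stable embeddedness and the compatibility of the Zariski topology with the definable topology on $\SL_n(K)$ should ensure that the $K$-points of any normal algebraic subgroup of $\mathbf{H}$ cut out a definable normal subgroup of $G$; applied to the center, unipotent radical and solvable radical of $\mathbf{H}$, together with the definable simplicity of $G$, this forces $\mathbf{H}$ to be semisimple and almost $K$-simple, and $K$-isotropy is automatic since $K$ is algebraically closed. For the sharper conclusion $G/Z(G)\cong\mathbf{H}(K)$, rather than merely a sandwich, one uses that over an algebraically closed field $\mathbf{H}(K)^+$ already coincides with $\mathbf{H}(K)$ modulo its center, so the Tits sandwich collapses; the remaining delicate point is the control of inseparable isogenies in characteristic $p$.

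The main obstacle I anticipate is the recognition step in part (i): promoting a definable group in a 1-$h$-minimal structure to essentially the $K$-points of an algebraic group. Analogues in the o-minimal (Pillay) and pure ACVF (Hrushovski--Loeser, Hrushovski--Rideau) settings are available, but here one must cope simultaneously with the valued-field, residue-field and value-group flavors of tameness supplied by 1-$h$-minimality, and one must ensure that the algebraic group produced is genuinely defined over $K$ rather than over some internal residue field. This is precisely where the hypothesis that $G$ is carried by the home sort $K^n$ is crucial, since otherwise semisimple groups defined over the residue field would furnish counter-examples to any naive formulation.
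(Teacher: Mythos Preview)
Your proposal has the right opening moves (quotienting by the finite centre, linearizing via a manifold/Lie structure and the adjoint representation) but then diverges from the paper at the two most delicate points, and in each case the divergence is a genuine gap rather than an alternative route.

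\textbf{Isotropy is not automatic.} Your claim in part (i) that ``isotropy over $K$ is automatic since $\mathbf{H}(K)$ contains the infinite non-abelian $G$'' is false: anisotropic semisimple $K$-groups can have infinite non-abelian groups of $K$-points (think of $\SO_3$ over $\mathbb{R}$, or norm-one quaternion groups over $\mathbb{Q}_p$). The paper obtains isotropy by a completely different mechanism: it first proves that the definably almost simple linear group $G^*$ cannot be \emph{bounded} in $\SL_n(K)$ (Proposition~\ref{prop:main}, a Bruhat--Tits--flavoured argument with congruence-type subgroups $\nSL_n(\eta,\delta)$ of a twisted $\nSL_n(\eta)$), and then invokes the Bruhat--Tits--Rousseau theorem that $\mathbf{H}(K)$ is bounded if and only if $\mathbf{H}$ is $K$-anisotropic. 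This unboundedness step is one of the main technical contributions and is entirely absent from your outline.

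\textbf{Tits' simplicity does not give the sandwich.} Knowing that $\mathbf{H}(K)^+$ is abstractly almost simple tells you nothing about whether $G^*$ contains it; $G^*$ is just some Zariski-dense subgroup of $\mathbf{H}(K)$, not a priori normal, and $\mathbf{H}(K)^+$ is not a priori definable. The paper instead shows (via a Lie-algebra and centroid argument, Proposition~\ref{jacobson}, adapting \cite{pps}) that $\dim G^* = \dim \mathbf{H}(K)$, so $G^*$ is \emph{open} in $\mathbf{H}(K)$; combined with unboundedness, a variant of a result of Tits from \cite{prasad} (Theorem~\ref{thm:prasadii}, whose extension to non-archimedean value groups is itself nontrivial and uses definability) then yields $\mathbf{H}(K)^+ \le G^*$. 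In part (ii) the same openness$+$unboundedness argument is needed to conclude $G^* = \mathbf{H}(K)$; Zariski density alone does not suffice.

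Finally, your ``recognition theorem'' worry is well-placed, but the paper does not resolve it by a Hrushovski--Pillay style result. It linearizes via the adjoint representation of the dwsd Lie group (Proposition~\ref{linearityproof}) and then, crucially, uses the simplicity of the Lie algebra and Jacobson's centroid construction to match $\dim G$ with $\dim \cl_K(G)$; this is what replaces the unavailable Lemma~\ref{lou}(ii) in the presence of extra $1$-h-minimal structure.
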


For the relevant definitions concerning algebraic groups, see 
Section~\ref{sec:lin}, and for the relevant background on the model theory of valued fields and 1-$h$-minimality see Sections ~\ref{sec:val} and \ref{sec:hen-min}.

The conclusion in (i) says that $G^*$ is ``almost equal'' to $\mathbf{H}(K)$. Indeed, the group $\mathbf{H}(K)^+$ (Definition~\ref{def:defg+}) is Zariski-dense in $\mathbf{H}(K)$ (see Remark~\ref{rem:semdense}); and if $K$ is algebraically closed, those two groups are even equal (see Remark~\ref{rem:g+acf}).
We are not certain whether under the given hypotheses $\mathbf{H}(K)^+$ is automatically definable. If it is definable, then, since
$\mathbf{H}(K)^+$ is a normal subgroup of $\mathbf{H}(K)$,
the conclusion will be that $G^*=\mathbf{H}(K)^+$. This holds, for example, if $K$ is real closed, or is a local field (see Lemma~\ref{lem:kneser}).

\begin{remark}
 We are not certain if the theorem would be correct without first replacing $G$ by $G/Z(G)$. In particular, we factor out the centre when defining a homomorphism to $\SL_n$ in the proof of Proposition~\ref{linearityproof}.
\end{remark}

We stress two key limitations in this result.

First, in Theorem~\ref{main} (ii), which is in characteristic $p$, we have assumed that $G\leq \SL_n(K)$. The reason for this is that we use  the adjoint representation defined via Jacobian matrices in the proof of (i), to linearise the group (see Proposition~\ref{linearityproof}). As presented this method only works in characteristic 0.

Second, Theorem~\ref{main} is a statement only about definable groups in the `home' sort $K$, rather than about arbitrary interpretable groups. The sorts required for elimination of imaginaries in henselian fields are rather complicated (see \cite{hhm,hmrc,mellor,vicaria}) and our methods do not seem strong enough to handle interpretable groups. Some progress on the latter lies in the paper \cite{hrush-rid} which develops a notion of \emph{generically metastable group} interpretable in the theory $\ACVF$ of algebraically closed non-trivially valued fields. The results there  seem to be disjoint from those in this paper. There have also been several recent papers (some in the 1-$h$-minimal context) on {\em interpretable} groups and fields in valued fields, avoiding use of elimination of imaginaries, by Halevi, Hasson and Peterzil -- see e.g. \cite{halevi-fields}, \cite{halevi-groups1},
\cite{halevi-groups3}.
The results of \cite{halevi-groups3} are in more restricted classes of valued fields but complement our theorem well. The main theorem there reduces questions about an {\em interpretable} definably simple group $G$ to the case when $G$ definably embeds in $\GL_n(K)$, and our theorem then applies. In particular, Theorem~\ref{main} above, combined with \cite[Corollary 10.5]{halevi-groups3}, has the following consequence. See \cite[Section 3.4]{hk-motivic} for the notion of $V$-minimal expansion (of an algebraically closed valued field of characteristic 0).

\begin{corollary}
Let $(K,v,\ldots)$ be a power-bounded   $T$-convex field, a $V$-minimal field, or a $p$-adically closed field, and let   $G$ be an infinite non-abelian group which is interpretable in $(K,v,\ldots)$ and is definably simple. Then either $G$ is definably isomorphic to a subgroup of $\GL_n(k)$ where $k$ is the residue field, or $G$ is definably isomorphic to $\mathbf{H}(K)^+$ where $\mathbf{H}$ is a semisimple almost $K$-simple and $K$-isotropic linear algebraic $K$-group. 
\end{corollary}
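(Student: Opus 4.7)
The plan is to combine Theorem~\ref{main}(i) with the interpretable-to-definable reduction of Halevi--Hasson--Peterzil \cite[Corollary 10.5]{halevi-groups3}. Each of the three structures listed --- power-bounded $T$-convex fields, $V$-minimal fields, and $p$-adically closed fields --- is a $1$-$h$-minimal henselian valued field of characteristic $0$, so Theorem~\ref{main}(i) is available throughout.

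First, I would apply \cite[Corollary 10.5]{halevi-groups3} to $G$. This yields a dichotomy: either $G$ is definably isomorphic to a subgroup of $\GL_n(k)$, which is the first alternative of the corollary; or $G$ is definably isomorphic to a definable subgroup $G'$ of $\GL_m(K)$ for some $m$, so that the universe of $G'$ lies in $K^{m^2}$. In the latter case $G'$ inherits from $G$ the properties of being infinite, non-abelian, and definably simple, so it satisfies the hypotheses of Theorem~\ref{main}(i).

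Applying Theorem~\ref{main}(i) to $G'$ produces a semisimple, almost $K$-simple, $K$-isotropic linear algebraic $K$-group $\mathbf{H}$ together with a definable isomorphism of $G'/Z(G')$ with some $G^*$ satisfying $\mathbf{H}(K)^+ \leq G^* \leq \mathbf{H}(K)$. Since $Z(G')$ is a definable normal subgroup of the definably simple group $G'$ and $G'$ is non-abelian, $Z(G') = 1$, whence $G \cong G' \cong G^*$.

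It remains to identify $G^*$ with $\mathbf{H}(K)^+$. The subgroup $\mathbf{H}(K)^+$ is normal in $\mathbf{H}(K)$, hence in $G^*$, and is nontrivial because $\mathbf{H}$ is $K$-isotropic; if it is also definable, then the definable simplicity of $G^* \cong G$ forces $\mathbf{H}(K)^+ = G^*$. Definability holds in all three settings: for $V$-minimal fields $K$ is algebraically closed, so $\mathbf{H}(K)^+ = \mathbf{H}(K)$ is itself algebraic by Remark~\ref{rem:g+acf}; for power-bounded $T$-convex fields $K$ is real closed; and $p$-adically closed fields are elementarily equivalent to $\Q_p$, a local field --- so the last two cases fall under Lemma~\ref{lem:kneser}. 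The main obstacle is arguably this last step: ensuring that Lemma~\ref{lem:kneser} really yields definability (not merely type-definability) in each of these settings, and, in the $p$-adic case, that the conclusion transfers from local fields to arbitrary $p$-adically closed fields by an elementary equivalence argument. Once these points are confirmed the rest is routine bookkeeping.
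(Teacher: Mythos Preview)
Your proposal is correct and follows essentially the same approach as the paper's own proof, which is quite terse: the paper simply notes that all three structures are $1$-h-minimal (citing \cite{iCR.hmin}) and that in each case $\mathbf{H}(K)^+$ is definable by Lemma~\ref{lem:kneser}(ii), leaving the invocation of \cite[Corollary 10.5]{halevi-groups3} and the passage through $Z(G')=1$ implicit. Your explicit flagging of the $p$-adically closed case as requiring a transfer from local fields is apt --- the paper applies Lemma~\ref{lem:kneser}(ii) directly to $p$-adically closed fields even though the lemma is only stated for local fields, so your caution there is well placed.
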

\begin{proof} The only point that needs noting is that in these cases, the underlying field is algebraically closed, real closed, or $p$-adically closed, and so $\mathbf{H}(K)^+$ is definable, by 
Lemma~\ref{lem:kneser}(ii) below. Note also that all of the structures considered here are $1$-h-minimal, by \cite[Corollary~6.2.7, Theorem~6.3.4, Proposition~6.4.2]{iCR.hmin}.
\end{proof}

We also note a result in the $p$-adic case related to Theorem~\ref{main}, pointed out to us by Caprace: In the case $K = \mathbb{Q}_p$, by \cite[Lemma 3.8]{pill-fields} (see also \cite{AH}), our definable group $G$ naturally carries the structure of a $p$-adic Lie group, so our result can be considered as a statement about definable, definably simple $p$-adic Lie groups.
On pp. 77-78 of \cite{willis}, a similar result is given in a non-definable setting: if $G$ is a topologically simple $p$-adic Lie group whose adjoint representation is non-trivial, then $G$ is isomorphic to a group of the form $\mathbf{H}({\mathbb{Q}}_p)^+/Z$, where $\mathbf{H}$ is a non-abelian almost $K$-simple $K$-isotropic linear algebraic $K$-group with $K=\mathbb{Q}_p$, and $Z$ is the centre of $\mathbf{H}({\mathbb{Q}}_p)^+$. Note that in the definable setting, the adjoint representation is automatically non-trivial as soon as $G$ is non-abelian; see Proposition~\ref{linearityproof}.

We sketch the structure of the proof of Theorem~\ref{main}. There is a well-behaved notion of (topological) dimension for definable sets. A key first step, in the characteristic 0 1-$h$-minimal setting, is to endow the definable group $G$ with a definable $C^1$-manifold structure. For this, we use results from  \cite{AH}, who develop the notion of a {\em definable weak strictly differentiable} (dwsd) manifold, and show that any definable group can be definably equipped with  structure so that it becomes a dwsd Lie group.  The method has precedents in the work of Wencel 
\cite{Wen.topGrp}, itself based on a proof of Pillay \cite{pill-omin} and ultimately on  Weil's `group-chunks' argument. Given the dwsd Lie structure, the Jacobian representation then gives a homomorphism
$\varphi\colon G \to \SL_n(K)$, and a dimension argument using definability ensures that ${\rm Im}(\varphi)$ is not trivial, and hence $\Ker(\varphi)$ is finite and equal to the centre $Z(G)$. We thereby replace $G$ by the definable subgroup $G/Z(G)$ of $\SL_n(K)$. This step is not needed in the characteristic $p$ case
(Theorem~\ref{main}(ii)), where linearity is assumed.

The second stage works more directly when $(K,v)$ is a \emph{pure} valued field, \emph{i.e.} does not carry definable extra (e.g. analytic) structure. Slightly over-simplifying, we take the Zariski closure of $G$ in $\SL_n(K)$, and show that this has the form $\mathbf{H}(K)$ for some semisimple almost $K$-simple $K$-isotropic algebraic group $\mathbf{H}$ defined over $K$. 
By dimension arguments, $G$ is valuation-open in $\mathbf{H}(K)$. These dimension arguments are invalid in the more general case when $(K,v,\ldots)$ is equipped with additional 1-$h$-minimal structure, and are replaced in the latter case by a Lie algebra argument: we take the Lie algebra $L(G)$ of the definably almost simple linear group $G$, argue that this is a simple Lie algebra, and that $G$ is embedded   in the automorphism group of the Lie algebra (in place of the Zariski closure of $G$), which has the same dimension as $G$. Thus, roughly speaking, the role of $\mathbf{H}(K)$ is replaced by $\Aut(L(G))$. This last argument uses ideas from \cite{pps}.

The third stage is to show that $G$ cannot be a \emph{bounded} subgroup of $\mathbf{H}(K)$, that is, there is no $\gamma\in \Gamma$ (the value group of $K$) such that all matrix entries of all elements of $G$ have value greater than $\gamma$. The argument here uses ideas from Bruhat-Tits buildings, though these are not mentioned explicitly.  We show that if $G$ were bounded then it would embed in a variant $\nSL_n(\eta)$ of $\SL_n(\mathcal{O})$, where $\O$ is the valuation ring. The  group $\SL_n(\mathcal{O})$ has a chain of normal subgroups with trivial intersection (the \emph{congruence subgroups}) which is  incompatible with definable almost simplicity of $G$, and a similar argument works in $\nSL_n(\eta)$.

Finally, we use variants (for Krull valuations) of two results from Prasad \cite{prasad}, to which he gives earlier attributions. First, by the last paragraph the group $\mathbf{H}(K)$ cannot be bounded, so 
a result in \cite{prasad} yields that $\mathbf{H}$ is $K$-isotropic, which ensures that $\mathbf{H}(K)^+$ is almost simple and Zariski dense in $\mathbf{H}(K)$. Second, since $G$ is valuation-open in $\mathbf{H}(K)$ and not bounded, applying \cite{prasad} with some finessing we obtain $G\geq \mathbf{H}(K)^+$. If $\mathbf{H}(K)^+$ is definable then we have equality. Some additional work is involved in these applications of \cite{prasad}, largely because we are dealing with arbitrary Krull valuations, rather than just valuations with archimedean value group. The biggest difficulties arise when $\Gamma$ has no maximal proper convex subgroup.

Some intermediate results in the paper may have independent interest and further applications. See Theorems~\ref{2.34}, \ref{simpleliealg} and Proposition~\ref{linearityproof} at the end of Section 3, concerning Lie algebras of definable groups in characteristic 0 in the 1-$h$-minimal context, extending material from \cite{AH}. Also Theorem~\ref{thm:prasadii} is a variant of a result from \cite{prasad} attributed there to Tits -- in some ways it is a strengthening of the Tits result, but it has an extra definability assumption. Proposition~\ref{prop:main} is a generalisation of the fact that an infinite bounded subgroup of $\SL_2(\mathbb{Q}_p)$  cannot be simple -- the latter special case is easily seen by considering the action of $\SL_2(\mathbb{Q}_p)$ on a $(p+1)$-branching tree.

We mention briefly some further literature on definable groups in valued fields. 
First, Pillay showed in \cite{pill-fields} that any infinite \emph{field} definable in $\Q_p$ is a finite extension of $\Q_p$. He also investigated in
\cite{pill} definable subgroups $G$ of $\GL_n(\Q_p)$. Using an argument reminiscent of Zilber Indecomposability, he shows that the commutator subgroup $[G,G]$ is Zariski closed, and that $G$ is simple as an abstract group if and only if it has no proper non-trivial Zariski-closed normal subgroup. In the paper \cite{hrush-pill}, among other results it is shown that if $G$ is a definable group in the home sort in ${\mathbb Q}_p$, then an open subgroup of $G$ is definably isomorphic to an open subgroup of $\mathbf{H}({\mathbb{Q}}_p)$ for some connected algebraic ${\mathbb Q}_p$-group $\mathbf{H}$.
A 2015 PhD thesis of Druart contains extensive results on \emph{commutative} definable subgroups of $\GL_n(\Q_p)$ and (see \cite{druart}) on definable subgroups of $\SL_2(K)$ where $K$ is $p$-adically closed. We also  mention a number of recent papers of interest of Ningyuan Yao, Will Johnson,  and co-authors. For example, tackling a question from \cite{pill},  in \cite{yao} it is shown that if $G$ is an algebraic group over $\mathbb{Q}_p$ which is commutative or reductive, then any open subgroup of $G(\mathbb{Q}_p)$ is definable in $\mathbb{Q}_p$  (equivalently, semi-algebraic).

{\bf Organisation of paper.}
The paper is structured as follows. We give background in Section \ref{sec:mod}, on linear algebraic groups, valued fields and on 1-$h$-minimality. Section 3.1 develops the background on definable manifolds
(in particular {\em dwsd manifolds}) from \cite{AH}. This concludes with Proposition~\ref{p.Gman}, due to \cite{AH} but stemming from earlier results of Wencel and previously Pillay,  which says that in a 1-$h$-minimal context, any definable group can be given definable manifold structure. This is applied in Section 3.2 to show that in a 1-$h$-minimal structure, if $G$ is a  definable definably almost simple group then its tangent space $T_e(G)$ can be given definably the structure of a simple Lie algebra of the same dimension, on which $G$ acts as a group of automorphisms.  The variants on Prasad's paper \cite{prasad2}
are presented in Section 4.1, and we show in Section 4.2 that a definable definably almost simple  group cannot be bounded in the valuation sense (Proposition~\ref{prop:main}). The proof of the main theorem is completed in Section 5.

{\bf Further directions.} We mention two further questions.

\begin{problem} \label{q-semisimple} Describe definably semisimple groups definable in the (expansions of) fields which are considered in this paper. Here a group is {\em definably semisimple} if it has no infinite abelian definable normal subgroup. A typical example of such a group should be an  open subgroup of a direct product of the groups described in Theorem~\ref{main}. Note that $\SL_n(\mathcal{O})$ (where $\mathcal{O}$ is the valuation ring of $K$) is an example of a definably semisimple definable group which is far from simple.
\end{problem}

\begin{problem} Extend Theorem~\ref{main} (at least for $\ACVF$) for expansions of fields with a definable section of the residue map (so definably simple groups will include rational points of simple groups defined over the residue field). In the case of $\ACVF$, there is a quantifier-elimination for such structures in Section 6 of \cite{poisson}.
\end{problem}

{\bf Acknowledgements:} This work has benefitted from conversations with many people over a long period. We warmly thank Patrick Simonetta -- indeed, the work began, in the context of $\ACVF$,  through discussions of the third author with Simonetta during and after the latter's Marie Curie Fellowship in Leeds in 1997--98. We thank Yatir Halevi, Assaf Hasson and Kobi Peterzil for communicating results from \cite{halevi-fields, halevi-groups1,  halevi-groups3}, and Gopal Prasad for providing additional information related to \cite{prasad}.

\section{Background} \label{sec:mod}

\subsection{Linear algebraic groups} \label{sec:lin}

We give here some background, terminology, and conventions regarding linear algebraic groups, as needed in this paper. Our terminology concerning algebraic geometry closely follows \cite{Humphreys, springer}, in particular always having some fixed algebraic closed field $\mathbf K$ in the background.
To further simplify the exposition, we will consider all linear algebraic groups as being embedded into $\SL_n = \SL_n(\mathbf K)$, for some $n$, and similarly, all varieties will be embedded into some affine space $\mathbf K^n$. Such linear algebraic groups and varieties will always be denoted by boldface letters, whereas non-boldface letters might typically denote subsets of the $K$-valued points of varieties.
We recall some details for readers not so familiar with this subject.

Fix a perfect field $K$, and also fix some algebraically closed field $\mathbf K$ containing $K$. The precise choice of $\mathbf K$ does not matter, as long as it contains all fields we are interested in; most of the time, one can simply take $\mathbf K$ to be the algebraic closure $K^{\alg}$ of $K$.

A subset $\mathbf X \subset \mathbf K^n$ is called {\em $K$-closed} (or {\em defined over $K$} or a {\em variety defined over $K$}) if it is the set of common zeros of finitely many polynomials in $n$ variables with coefficients in $K$.\footnote{In general,
``defined over $K$'' is a more restrictive condition, but since we assume $K$ to be perfect, it coincides with being $K$-closed; see \cite[AG \textsection 11]{borel}, \cite[Section 1]{borell}.}
We call $\mathbf X$ {\em irreducible over $K$} if it is not the union of two proper $K$-closed subsets. A {\em $K$-regular} function on $\mathbf X$ is the restriction to $\mathbf X$ of a polynomial in $n$ variables with coefficients in $K$.
We write $\mathbf X(K) = \mathbf X \cap K^n$ for the set of $K$-rational points of $\mathbf X$. (Note that $\mathbf X$ is automatically also defined over any field extension $L \subset \mathbf K$ of $K$, so that $\mathbf X(L)$ also makes sense for such $L$.)

The {\em Zariski topology} on $\mathbf X(K)$ is the topology whose closed sets are the sets of the form $\mathbf Z(K)$, for some $K$-closed $\mathbf Z \subset \mathbf X$.
We denote the Zariski closure in $\mathbf X(K)$ of an arbitrary subset $Y \subset \mathbf X(K)$ by $\cl_K(Y)$.

\begin{remark} \label{note-on-dim} Note that the Zariski topology on $\mathbf X(K)$ is not just the restriction of the Zariski topology on $\mathbf X = \mathbf X(\mathbf K)$ (where $\mathbf K$-closed sets are considered). Nevertheless, for $Y \subset \mathbf X(K)$, one obtains that $\mathbf Z' := \cl_{\mathbf K}(Y) \subset \mathbf X$ is $K$-closed, since any field automorphism of $\mathbf K$ fixing $K$ pointwise also fixes $Y$ and hence sends $\mathbf Z'$ to itself. In particular, $\mathbf Z'(K)$ makes sense and is equal to $\cl_K(Y)$.
\end{remark}

The {\em dimension} of a variety $\mathbf X$ can be defined in various equivalent ways,
e.g., as the {\em Zariski dimension}: the length $d$ of the longest chain $\mathbf Y_0  \subsetneq \dots \subsetneq \mathbf Y_d \subseteq \mathbf X$ of irreducible (over $\mathbf K$) varieties $\mathbf Y_i$ defined over $\mathbf K$ contained in $\mathbf X$; see e.g. the Definition after Corollary 1.6 in \cite[Ch. I]{hartshorne}
for more details. We denote that dimension 
by $\zardim \mathbf X$.

We set $\SL_n := \SL_n(\mathbf K)$. 
This is a $K$-closed subset of $\mathbf K^{n^2}$, so the above notions apply to subsets of $\SL_n$.
A {\em linear algebraic} $K$-group (often below just {\em $K$-group} for short) is a subgroup $\mathbf H \le \SL_n$ (for some $n$) which is defined over $K$.
We consider $\GL_n$ as a linear algebraic $K$-group by embedding it into $\SL_{n+1}$, sending a matrix $A \in \GL_n$ to $\begin{pmatrix}A&0\\0&(\det A)^{-1}\end{pmatrix}$.

If $\mathbf H \subset \SL_n$ and $\mathbf H' \subset \SL_{n'}$ are two $K$-groups, then a {\em $K$-morphism} $f\colon \mathbf H \to \mathbf H'$ (also called a {\em morphism defined over $K$}) is a group homomorphism from $\mathbf H$ to $\mathbf H'$ given by an $n'\times n'$-tuple of $K$-regular functions on $\mathbf H$. If $f$ is bijective and its inverse is also a $K$-morphism, we call $f$ a {\em $K$-isomorphism}, and if such an $f$ exists, we say that $\mathbf H$ and $\mathbf H'$ are {\em isomorphic over $K$}.
\begin{remark} \label{rem:bijmorph}
For any $K$-morphism $f: \mathbf H \to \mathbf H'$ of $K$-groups,
the image $f(\mathbf H) \subset \mathbf H'$ is also a $K$-group. If $f$ is injective and $K$ has characteristic $0$, then $f^{-1}\colon f(\mathbf H) \to \mathbf H$ is also a $K$-morphism and so $f$ induces a $K$-isomorphism from $\mathbf H$ to $f(\mathbf H)$ (see for example \cite[Exercise 5.3.5(1)]{springer}).
\end{remark}

We write $\mathbf{G}_a=\mathbf{G}_a(\mathbf{K})$ for the additive group of $\mathbf{K}$, and
$\mathbf{G}_m=\mathbf{G}_m(\mathbf{K})$ for the multiplicative group of $\mathbf{K}$.
Note that both of them can be considered as $K$-groups by identifying them with subgroups of $\SL_2$:
$\mathbf{G}_a = \begin{pmatrix}1&a\\0&1\end{pmatrix}$
and 
$\mathbf{G}_m = \begin{pmatrix}a&0\\0&a^{-1}\end{pmatrix}$.

Usually, linear algebraic $K$-groups $\mathbf H$ are defined abstractly, \emph{i.e.}, $\mathbf H$ does not come with a fixed embedding into $\SL_n$. To see that various notions which we define naively (using a fixed embedding) also make sense abstractly, we need to verify that the choice of the embedding does not matter. More precisely, suppose that $\mathbf H \subset \SL_n$ and $\mathbf H' \subset \SL_{n'}$ are $K$-groups and that 
$f\colon \mathbf H \to \mathbf H'$ is a $K$-isomorphism. Then:
\begin{itemize}
  \item The map $f$ sends $K$-closed subsets of $\mathbf H$ to $K$-closed subsets of $\mathbf H'$, and it sends $K$-rational points to $K$-rational points.
  \item If we fix a first order structure on $K$ expanding the field language, then $f$ sends $K$-definable subsets of $\mathbf H(K)$ to $K$-definable subsets of $\mathbf H'(K)$.\footnote{Beware that the similar-sounding terms ``defined over $K$'' and ``$K$-definable'' mean different things. However, if $\mathbf X \subset \mathbf K^n$ is defined over $K$, then $\mathbf X(K)$ is $K$-definable.}
  \item If $K$ is a valued field, then $f$ sends bounded subgroups of $\mathbf H$ to bounded subgroups of $\mathbf H'$; see Remark~\ref{boundedness}.
\end{itemize}

Several authors often speak about affine $K$-groups. 
Note that this is (up to isomorphism) the same as 
a linear algebraic $K$-group; see e.g. \cite[Proposition~1.10]{borel}.
(Recall that in this paper, all $K$-groups are assumed to be linear.)

The definitions of when $\mathbf H$ is a \emph{connected group}, a \emph{semisimple group} and a \emph{reductive group} can be found in \cite{borell, borel, springer, mar}.

We give some general terminology on almost simple groups.

\begin{definition} \label{def:simple}
We shall say that an abstract group $G$ is \emph{almost simple} if $G$ has no proper infinite normal subgroup. If $G$ is definable in some ambient first-order structure, we call it \emph{definably almost simple} if it has no proper infinite definable normal subgroup.
\end{definition}

\begin{remark} \label{rem:simple}
If $G$ is infinite and almost simple, and $H$ is a proper normal subgroup of $G$, then $H$ is finite, so $C_G(H)=G$ (as $|G:C_G(H)|$ is finite), that is, $H\leq Z(G)$. In particular, $G/Z(G)$ is (abstractly) simple. Likewise if $G$ is definably almost simple then $G/Z(G)$ is definably simple. 
\end{remark}

\begin{definition}\label{def:almost}
A $K$-group $\mathbf{H}$ is said to be \emph{almost $K$-simple} if it has no proper nontrivial connected normal $K$-subgroup. 
\end{definition}

Clearly, if $\mathbf{H}$ is almost simple as an abstract group, then it is almost $K$-simple, since a nontrivial connected $K$-subgroup is in particular infinite.

\begin{definition} \label{def:defg+}
Suppose $\mathbf{H}$ is a connected $K$-group. By $\mathbf{H}(K)^+$ we denote the normal subgroup of $\mathbf{H}(K)$ generated by the $K$-rational points of the unipotent radicals of parabolic $K$-subgroups of $\mathbf{H}$ \cite[1.5.2]{mar}, \cite[1.1]{tits}. 
\end{definition}

\begin{remark} \label{rem:g+}
When $K$  is perfect (as holds throughout this paper), $\mathbf{H}(K)^+$ can be characterized as the subgroup of $\mathbf{H}(K)$ generated by all unipotent elements  \cite[1.5.2]{mar}, \cite[Proposition 6.2$(i)$]{bortit}.
\end{remark}

\begin{remark} \label{rem:g+acf}
Note that $\mathbf{H}(K)^+$ is mostly of interest when $K$ is not algebraically closed. Indeed, if $K$ is algebraically closed and $\mathbf{H}$ is semisimple, then
$\mathbf{H}(K)^+ = \mathbf{H}(K)$ by \cite[27.5 (e)]{Humphreys}.
\end{remark}

\begin{definition} \label{def:isot}
Suppose $\mathbf{H}$ is a connected $K$-group and $K$ is a perfect field.
Then $\mathbf{H}$ is said to be \emph{isotropic over $K$} or \emph{$K$-isotropic} if $\mathbf{H}$ has an algebraic subgroup $\mathbf{T}$ defined over $K$ and isomorphic over $K$ to some torus,
\emph{i.e.}, to a group of the form $(\mathbf G_m)^t$ for some $t \ge 1$.
 Here $\mathbf{T}$ is what is called a \emph{$K$-split torus} (of $\mathbf H$).
We say that $\mathbf{H}$ is \emph{$K$-anisotropic} if it is not $K$-isotropic. See e.g. \cite[0.25]{mar}.
\end{definition}

Note that for $\mathbf{H}$ to be isotropic, we do not require the group $\mathbf{T}$ to be a maximal torus of $\mathbf{H}$. In particular, $\mathbf{H}(K)$ need not be a Chevalley group.

\begin{example}
The $\R$-subgroup $\SO_3 = \{A \in \SL_3 \mid AA^T=1\}$ of $\SL_3$ is $\R$-anisotropic, essentially since
$\SO_3(\R)$ is compact.
\end{example}

\begin{remark} \label{rem:semdense}
Suppose that $\mathbf{H}$ is a connected $K$-group.
\begin{enumerate}
\item[(i)] If $\mathbf{H}$ is semisimple, then $\mathbf{H}$ is $K$-isotropic if and only if $\mathbf{H}(K)^+$ is non-trivial \cite[1.5.2]{mar}. 
\item[(ii)] If $K$ is infinite and $\mathbf{H}$ is almost $K$-simple, $K$-isotropic and non-abelian, then $\mathbf{H}(K)^+$ is Zariski dense in $\mathbf{H}(K)$ \cite[1.5.4]{mar}, \cite[3.2(20)]{tits}. 
\item[(iii)] In the seminal paper \cite{tits}, Tits proved that if $K$ has at least 4 elements and $\mathbf{H}$ is semisimple and almost $K$-simple (Definition \ref{def:almost}), then $\mathbf{H}(K)^+$ is almost simple (see Definition~\ref{def:simple}). This theorem of Tits covers previous results of Dieudonn{\'e},  Chevalley, Steinberg and others on simplicity of classical linear algebraic groups of various special kinds, for example special linear groups $\SL_n(K)$ and symplectic groups $\Sp_{2n}(K)$. 
\item[(iv)] An example where $\mathbf{H}(K)^+$ is a Zariski dense proper subgroup of $\mathbf{H}(K)$ is given in \cite[1.3]{tits}. This is over a non-perfect field, and is also an example where $\mathbf{H}(K)^+$ is not the group generated by all unipotent elements of $\mathbf{H}(K)$.
\end{enumerate}
\end{remark}

\begin{definition}\label{def:simpconn}
A connected semisimple linear algebraic group $\mathbf{H}$ is called \emph{simply connected} if every central isogeny $\pi\colon \widetilde{\mathbf{H}} \to \mathbf{H}$, where $\widetilde{\mathbf{H}}$ is a connected linear algebraic group,  is an algebraic group isomorphism (see \cite[0.18, 1.4.9]{mar}, \cite[2.1.13]{plat-rap}, \cite[8.1.11]{springer}).
\end{definition}

\begin{remark}\label{rem:kneser}
\begin{enumerate}
\item[(i)] Every semisimple $K$-group $\mathbf{H}$ has a \emph{universal $K$-covering} $\pi \colon \widetilde{\mathbf{H}} \to \mathbf{H}$ defined over $K$ (\cite[Proposition 2.10]{plat-rap} \cite[2.6.1]{tits_class}); that is, $\widetilde{\mathbf{H}}$ is a simply connected $K$-group and $\pi$ is a central isogeny defined over $K$. (For more information about these notions, we refer the reader e.g. to \cite[Section 2.1.13]{plat-rap}.) Moreover, (cf. \cite[6.5]{bortit}) \[\pi\left(\widetilde{\mathbf{H}}(K)^+\right) = \mathbf{H}(K)^+.\]

\item[(ii)] The famous \emph{Kneser-Tits problem} asks whether $\mathbf{H}(K) = \mathbf{H}(K)^+$ holds if  $\mathbf{H}$ is simply connected, almost $K$-simple and isotropic over $K$ \cite{tits, gille}. It is known that, under these assumptions, $\mathbf{H}(K)=\mathbf{H}(K)^+$ holds, for example, if $K$ is algebraically closed
(see Remark~\ref{rem:g+acf})
or $K$ is a local field (e.g. $K=\Q_p$ the field of $p$-adic numbers) \cite[2.3.1(a)]{mar}, or if $K$ is a real closed field \cite[6.1, 6.8]{pe-pi-st}; however, in general $\mathbf{H}(K)^+$ might be a proper subgroup of $\mathbf{H}(K)$ -- Platonov gave a counterexample to the Kneser-Tits problem (see \cite{gille}, \cite{plat-rap} and \cite{tits_whit}).
\end{enumerate} 
\end{remark}

It would be very helpful for this paper to know conditions which ensure, for a semisimple almost $K$-simple linear algebraic $K$-group $\mathbf{H}$, that $\mathbf{H}(K)^+$ is definable in the field $(K,+,\times)$ (possibly equipped with a valuation). For example, if this holds then in Theorem~\ref{main} the conclusion could be strengthened to $G^*=\mathbf{H}(K)^+$. We include the following observation on this. 
\begin{lemma} \label{lem:kneser}
Let $\mathbf{H}$ be a semisimple, almost $K$-simple, linear algebraic $K$-group. Suppose that $\pi\colon\widetilde{\mathbf{H}}\to \mathbf{H}$ is the universal $K$-covering (see Remark \ref{rem:kneser}). Then
\begin{enumerate}
\item[(i)] if $\widetilde{\mathbf{H}}(K)^+=\widetilde{\mathbf{H}}(K)$, then $\mathbf{H}(K)^+$ is definable in $(K,+,\times)$;

\item[(ii)] if $K$ is  real closed or a local field, and $\mathbf{H}$ is $K$-isotropic, then 
$\mathbf{H}(K)^+$ is definable in $(K,+,\times)$.
\end{enumerate}
\end{lemma}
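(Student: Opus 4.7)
The plan is to prove (i) directly from Remark~\ref{rem:kneser}(i) using a straightforward definability argument, and then to deduce (ii) by verifying that the universal $K$-covering $\widetilde{\mathbf H}$ satisfies the hypothesis of (i) in both of the listed cases.

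For (i), combine Remark~\ref{rem:kneser}(i), which asserts $\pi(\widetilde{\mathbf H}(K)^+) = \mathbf H(K)^+$, with the standing hypothesis $\widetilde{\mathbf H}(K)^+ = \widetilde{\mathbf H}(K)$, to obtain $\mathbf H(K)^+ = \pi(\widetilde{\mathbf H}(K))$. After fixing embeddings of $\widetilde{\mathbf H}$ and $\mathbf H$ into some $\SL_n$ and $\SL_{n'}$, the $K$-morphism $\pi$ is given by polynomials with coefficients in $K$, so its restriction $\widetilde{\mathbf H}(K) \to \mathbf H(K)$ is a definable function in $(K,+,\times)$. Its image $\mathbf H(K)^+$ is therefore a definable set.

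For (ii), it suffices to verify the hypothesis of (i), namely $\widetilde{\mathbf H}(K)^+ = \widetilde{\mathbf H}(K)$, under the stronger assumptions of (ii). By Remark~\ref{rem:kneser}(ii), this Kneser--Tits equality for $\widetilde{\mathbf H}$ is known when $\widetilde{\mathbf H}$ is simply connected, almost $K$-simple and $K$-isotropic, and $K$ is either real closed or a local field. Simple connectedness holds by construction of the universal $K$-covering, so the task reduces to transferring almost $K$-simplicity and $K$-isotropy from $\mathbf H$ to $\widetilde{\mathbf H}$. Both are standard facts about central $K$-isogenies with finite kernel: any proper connected normal $K$-subgroup of $\widetilde{\mathbf H}$ cannot be contained in the finite central subgroup $\ker\pi$, so it projects to a proper connected normal $K$-subgroup of $\mathbf H$, yielding almost $K$-simplicity; and for a $K$-split torus $\mathbf T \le \mathbf H$, the identity component of $\pi^{-1}(\mathbf T)$ is a $K$-split torus of $\widetilde{\mathbf H}$ of the same positive dimension, yielding $K$-isotropy.

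There is no serious obstacle; the argument is essentially bookkeeping built on Remark~\ref{rem:kneser}. The only point requiring a moment's thought is that $\pi(\widetilde{\mathbf H}(K))$ is genuinely definable in the pure field $(K,+,\times)$, without auxiliary structure, which is immediate from $\pi$ being polynomial over $K$.
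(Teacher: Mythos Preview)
Your proof is correct and follows essentially the same approach as the paper's: both parts rest on Remark~\ref{rem:kneser}, with (i) using $\pi(\widetilde{\mathbf H}(K)) = \mathbf H(K)^+$ and the $K$-definability of $\pi$, and (ii) reducing to (i) via the Kneser--Tits results recorded in Remark~\ref{rem:kneser}(ii). You supply more detail than the paper on transferring almost $K$-simplicity and $K$-isotropy to $\widetilde{\mathbf H}$ (the paper simply asserts the former in a parenthetical and leaves the latter implicit), while the paper in turn inserts a preliminary case distinction in (i) for the anisotropic case---which your argument in fact does not need, since Remark~\ref{rem:kneser}(i) carries no isotropy hypothesis.
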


\begin{proof}
(i) We may assume that $\mathbf{H}$ is $K$-isotropic, as otherwise, by Remark~\ref{rem:semdense}(2) (i), $\mathbf{H}(K)^+$ is trivial, so clearly definable. By Remark~\ref{rem:kneser}, $\pi\left(\widetilde{\mathbf{H}}(K)\right)=\mathbf{H}(K)^+$.  Since $\widetilde{\mathbf{H}}$ and $\pi$ are defined over $K$, $\mathbf{H}(K)^+$ is definable in  the field $K$.

(ii) This follows immediately from (i) and 
Remark~\ref{rem:kneser} (ii). (Note that if $\mathbf H$ is almost $K$-simple, then so is $\widetilde{\mathbf H}$.)
\end{proof}

\subsection{Valued fields} \label{sec:val}

In most of the paper, $K$ will denote a field  with a non-trivial non-archimedean henselian valuation $v\colon K \to \Gamma\cup\{\infty\}$, where  $(\Gamma,<,+)$ is any totally ordered abelian group. If $K$ has positive characteristic $p$, then we usually assume that $K$ is algebraically closed. 
We implicitly always assume $v$ to be surjective.

As usual, the neutral element of $\Gamma$ is denoted by $0$, and we set $\gamma<\infty$ for all $\gamma\in\Gamma$.  We stress that the group $\Gamma$ may be non-archimedean, that is, may not embed in $({\mathbb R},<,+)$ -- we are dealing with {\em Krull} valuations.

We write $(K,v)$ for the valued field. To say that $(K,v)$ is \emph{Henselian} means that $v$ has  (up to equivalence) a unique extension to any algebraic extension of $K$. We shall denote by $K^{\alg}$ an algebraic closure of $K$ and abuse notation by using $v$ to also denote the unique valuation on $K^{\alg}$ extending $v$ on $K$.

\begin{notation}
If $(K,v)$ is a valued field, define $\O = \{x\in K : v(x)\geq 0\}$ (the \emph{valuation ring} of $(K,v)$), $\M = \{x\in K : v(x) > 0\}$ (its unique maximal ideal), $k = \O/\M$ (the \emph{ residue field}) and $\res\colon \O \to k$ (the \emph{residue map}).    
\end{notation}

\begin{notation}
For each $\gamma\in\Gamma$ and $a\in K^n$ define the \emph{open ball} $$B_{\gamma}(a)=\{x\in K^n : v(x_i-a_i)>\gamma \text{ for }i=1,\dots,n\}.$$    
\end{notation}

The family $\{B_{\gamma}(a) : \gamma\in\Gamma\}$ forms a basis of open neighbourhoods of $a$ in a topology, the \emph{valuation topology} on $K^n$.

We next introduce the (higher) leading term structures $\RV_\lambda$, a now-standard ingredient in the model theory of valued fields, and necessary for the next subsection.

\begin{definition}
Let $(K,v)$ be a valued field. Let $\lambda\in \Gamma$ with $\lambda\geq 0$, and define
$I_\lambda:=\{x\in K: v(x)>\lambda\}$, an ideal of $\mathcal{O}$. Let $\RV_\lambda^\times$ be the quotient of multiplicative groups $\RV_\lambda^\times=K^{\times}/(1+I_\lambda)$, let
$\RV_\lambda:=\RV_\lambda^{\times}\cup\{0\}$, and let
$\rv_\lambda:K\to \RV_\lambda$ be the map extending the projection $K^\times \to \RV_\lambda^{\times}$, with $\rv_\lambda(0)=0$.
We write $\RV$ and $\rv$ for $\RV_0$ and $\rv_0$ respectively.
\end{definition}

To get some intuition about $\RV$,
note that since the subgroup $\mathcal{O}^\times/(1+\mathcal{M})$ of $K^\times/(1+\mathcal{M})$ is isomorphic to the multiplicative group $k^\times$, we have a short exact sequence
$$k^\times  \rightarrow {\rm RV}^\times \to \Gamma.$$
Another point of view is that we may identify $\RV^\times$ with the set of open balls $\{B_{v(a)}(a):a \in K^\times\}$.

\subsection{Model theory and 1-$h$-minimality} \label{sec:hen-min}

We will consider our valued field $(K, v)$ as a structure in a suitable first-order language.
Many of our results hold for various different languages, the most basic one being Abraham Robinson's one-sorted language
$L_{\divv}=(+,-,\cdot,0,1,\divv)$, where $\divv$ is a binary relation symbol with $K\models \divv(a,b)$ if and only if $v(a)\leq v(b)$.
It follows from \cite{rob} that any algebraically closed valued field has quantifier elimination in this language, and conversely, by \cite{MMD}, any non-trivially valued field with quantifier elimination in $L_{\divv}$ is algebraically closed.

Most of the time, we will either assume $(K,v)$  
to be a model of $\ACVF_p$, \emph{i.e.} an algebraically closed valued field of positive characteristic $p$, in the language $L_{\divv}$, or we will assume that $\chr(K) = 0$ and we work in any 1-$h$-minimal expansion $(K, v, \dots)$, a notion which we recall below.

The value group, the residue field and the leading term structure $\RV$ will be considered as imaginary sorts. Also, we can consider the disjoint union
$\coprod_{\lambda \ge 0}\RV_\lambda$ of all higher leading term structures as an imaginary sort.

The notion of 1-$h$-minimality has been introduced in equi-characteristic 0 in \cite{iCR.hmin} and in mixed characteristic in \cite{hmin2}.  It provides the framework of Theorem~\ref{main}. We discuss key examples, and describe the consequences of 1-$h$-minimality that we need. Note that we work with the above notation for valuations, not the multiplicative notation from \cite{iCR.hmin} and \cite{hmin2}.

\begin{definition}
Let $(K,v,\ldots)$ be a valued field, and let $\lambda\in \Gamma$ with $\lambda>0$. If $C\subset K$ is finite, and $X\subset K$ is arbitrary, we say that $C$ {\em $\lambda$-prepares} $X$ if the following holds: for any $x,x'\in K$ such that $\rv_\lambda(x-c)=\rv_\lambda(x'-c)$ for all $c\in C$, we have 
$$x\in X \Leftrightarrow x'\in X.$$
\end{definition}

In \cite{iCR.hmin} the authors stress an analogy with an o-minimal field $R$: given a definable $X\subset R$, there is a finite set $C$ such that for $x\in R$, whether or not $x\in X$ depends on the tuple
$({\rm sgn}(x-c):c\in C)$; we write $X$ canonically as a finite union of open intervals and singletons, and $C$ consists of the singletons and endpoints of the intervals.
In $1$-h-minimality, $\rm sgn(x-c)$ is replaced by $\rv_\lambda(x-c)$ (for certain $\lambda$).
In the o-minimal case, $C$ is definable from a canonical parameter for $X$ (in particular from any parameters used to define $X$) but this issue is more subtle in the valued field case and is the point of the next definition.

\begin{definition}\label{1h}
Let $T$ be a theory of valued fields of characteristic 0 in a language $L\supseteq L_{{\rm div}}$. We say that
$T$ is {\em 1-$h$-minimal} if the following holds:
let $K\models T$, let $n\in \mathbb{N}^{>0}$,  let $\lambda\in \Gamma$ with $\lambda>0$, let $A\subset K$ and let $\xi \in \RV_\lambda$, a single element. Then for any $(A \cup \RV_{v(n)}\cup \{\xi\})$-definable set $X\subset K$, there is $m\in \mathbb{N}^{>0}$ such that $X$ is $(\lambda +v(m))$-prepared by an $A$-definable finite
set $C\subset K$. One calls a structure $1$-h-minimal if its theory is.
\end{definition}

This definition, from 
\cite[Definition 2.2.1]{hmin2} and applicable in the mixed characteristic case, extends that given in \cite{iCR.hmin} for equicharacteristic 0 -- in the latter case we have $v(n) = v(m)=0$ so $n$ and $m$ play no role. In fact several versions of this definition are given in \cite[Section 2.2]{hmin2}, and they are shown to be equivalent in Theorem 2.2.8. One version states that for any $(K,v,\ldots)\models T$, if $v'$ is the finest equicharacteristic 0 coarsening of $v$, then $(K,v',\ldots)$ is 1-$h$-minimal in the sense of \cite{iCR.hmin}. Proofs from 1-$h$-minimality in mixed characteristic typically reduce to proofs in equicharacteristic 0 using $\RV$-resplendency results (\cite[Section 2.6]{hmin2}) and the definition of 1-$h$-minimality via the finest equicharacteristic 0 coarsening; see e.g. \cite[Theorem 2.2.8]{hmin2}.

The notion of
1-$h$-minimality is one of a hierarchy of conditions $\ell$-$h$-minimality for $\ell\in \mathbb{N}\cup\{\omega\}$, where $\ell$ governs the number of elements $\xi$ from $\RV_\lambda$ in Definition~\ref{1h}. These have
increasing strength as $\ell$ increases, and it  is not known whether they are all equivalent. These are referred to generically as {\em Hensel minimality}. Provided we assume finite ramification\footnote{meaning that the valuation takes finitely many values between 0 and $v(p)$} in the mixed characteristic case (or allow infinite ramification but add `definable spherical completeness' as an assumption), they all imply that the models of $T$ are Henselian as valued fields
(in equicharacteristic 0, see \cite[Theorem 2.7.2]{iCR.hmin}, and for mixed characteristic, see \cite[Remark 2.2.3]{hmin2}).

\begin{example}  \label{ex1h} We list several types of examples of 1-$h$-minimal expansions of valued fields. For more detail concerning the examples, see \cite{iCR.hmin}.

 1. Henselian valued fields of equicharacteristic 0 (\cite[Theorem 6.2.1]{iCR.hmin}), or mixed characteristic (\cite[Corollary 6.2.7]{iCR.hmin}), possibly equipped with analytic structure in the sense of \cite{CL.analyt}, are 1-$h$-minimal. These fields with analytic structure form a broad family stemming ultimately from the construction of ${\mathbb Q}_p^{{\rm an}}$ in \cite{denef}. (The structure ${\mathbb Q}_p^{{\rm an}}$ is essentially an expansion of ${\mathbb Q}_p$ by multivariable functions of the form 
$f(x)=\Sigma_{\mu \in {\mathbb N}^k} a_\mu x^\mu$, where 
$x=(x_1,\ldots,x_k)$ and $a_\mu\in {\mathbb Z}_p$, with 
$v(a_\mu) \to \infty$ as $|\mu| \to \infty$, where $|\mu|=\mu_1+\ldots+\mu_k$; such functions converge on $\mathbb{Z}_p^k$, and are defined to take value 0 elsewhere.)

2. Let $T_0$ (in a language $L_0$) be an o-minimal expansion of the theory of real closed fields, which is {\em power bounded} in the sense that for $K\models T_0$, and any $L_0(K)$-definable function $f:K \to K$, there is a `power function' $g:K^\times \to K^\times$ (a definable endomorphism of $(K^\times,\cdot)$) such that $|f(x)|\leq g(x)$ for all sufficiently large $x$. Let $K_0\prec K$ be models of $T_0$, let $\mathcal{O}_K$ be the convex closure of $K_0$ in $K$ and assume that $\mathcal{O}_K$ is a proper subring of $K$. Then $\mathcal{O}_K$ is a valuation ring of $K$ and the expansion of the $L_0$-structure $K$ to a language $L$ with a predicate for $\mathcal{O}_K$ is referred to as a $T_0$-convex valued field. (Of course, the valuation can also be defined using the binary predicate ${\rm div}$ in place of the one defining $\mathcal{O}_K$.) By \cite[Theorem 6.3.4]{iCR.hmin}, such a structure is 1-$h$-minimal. 

3. Any expansion of a 1-$h$-minimal field by predicates interpreted by subsets of Cartesian powers of $\RV$ is 1-$h$-minimal
-- see \cite[Section 1.2]{iCR.hmin} in the equicharacteristic $0$ case and \cite[Proposition~2.6.5 (ii)]{hmin2} in mixed characteristic. For example, we may expand the value group $\Gamma$ by a predicate for a proper convex subgroup $\Gamma_0$, ensuring that the coarsening valuation $K \to \Gamma \to \Gamma/\Gamma_0$ is definable. If the coarsened valuation is of equicharacteristic 0,\footnote{Maybe this also works if the coarsened valuation is of mixed characteristic, but the proof has not been written down.} 
then by \cite[Theorem 2.2.8 (4)$\Rightarrow$ (2)]{hmin2}, in this expanded language, $K$ remains $1$-h-minimal if we consider it as a valued field with the coarsened valuation.

 \end{example}

We next recall how one defines dimension and differentiation in the valued field context. 

\begin{definition}\label{def:dim}
The (topological) \emph{dimension} $\dim X$ of a definable set $X \subset K^n$ is the maximal $d$ such that there exists a coordinate projection $\pi\colon K^n \to K^d$ such that $\pi(X)$ contains a ball.
(By a \emph{coordinate projection}, we mean a map $(x_1, \dots, x_n) \mapsto (x_{i_1}, \dots, x_{i_d})$, for some $1 \le i_1 < \dots < i_d \le n$.) We set $\dim X := -\infty$ if $X$ is empty.
\end{definition}
A different definition of dimension is given (in equicharacteristic 0) in \cite[Definition 5.3.1]{iCR.hmin}, but  (under 
1-$h$-minimality) this is shown to be equivalent to the above in Proposition 5.3.4(1) (see also \cite[Proposition 3.1.1(3) part (1)]{hmin2} for mixed characteristic). 

In characteristic zero, differentiation is defined in the usual way, using limits with respect to the valuation topology. In the following definition, we define the valuation of a tuple to be the minimum of the valuations of its entries: For $x = (x_1, \dots, x_n) \in K^n$, set $v(x) := {\rm min}_i v(x_i)$.

We shall work with the following notion of {\em strict differentiability}, adopting the presentation of \cite{AH}.

\begin{definition}\label{d.deriv}
Assume $(K,v)$ is a valued field of characteristic zero. Let $U \subset K^m$ be an open set and $f\colon U \to K^n$ be a map.
We say that $f$ is {\em strictly differentiable} at $a\in U$ if there is a linear map $A:K^m\to K^n$ such that  for every $\epsilon\in \Gamma$ there is $\delta\in \Gamma$ such that for all $x,y\in K^m$ with $v(x-a)>\delta$ and $v(y-a)>\delta$, we have 
$$v(f(x)-f(y)-A(x-y))\geq v(x-y)+\epsilon.$$
We say $f$ is {\em strictly differentiable on $U$} if it is strictly differentiable at every point of $U$. 
\end{definition}

As noted in \cite{AH}, the map $A$ above is uniquely determined, so may be denoted $d_af$. If $f$ is strictly differentiable, then it is continuously differentiable in the more usual sense.

\newcommand{\bad}{{\mathrm{bad}}}

The following proposition collects some key consequences of 1-$h$-minimality. Note, for here and later,  that part of the definition of 1-$h$-minimality includes that the field has characteristic 0 (though the residue field may have prime characteristic).

\begin{proposition} \label{oldhyp} 
Assume that $K$ is either an algebraically closed valued field of characteristic $p$, or is a 1-$h$-minimal expansion of a henselian valued field. Then the following hold.
\begin{enumerate}
\item[(i)] Every infinite definable subset of $K$ contains a ball. Hence 
any non-empty definable set $X \subset K^n$ has dimension $0$ if and only if it is finite.
\item[(ii)] If $f\colon X \to Y$ is a definable map (for some definable $X \subset K^m, Y \subset K^n$) and if
       there exists a $d \ge 0$ such that $\dim f^{-1}(y) = d$ for every $y \in Y$, then $\dim X = \dim Y + d$.
       In particular,  if there exists a definable bijection between some definable sets $X \subset K^m$ and $Y \subset K^n$, then $\dim X = \dim Y$,
and for definable sets $X \subset K^m$ and $Y \subset K^n$, we have $\dim (X \times Y) = \dim X + \dim Y$.
\item[(iii)] For definable sets $X, X' \subset K^n$, we have  $\dim (X \cup X') = \max\{\dim X, \dim X'\}$. In particular, $X \subset X'$ implies $\dim X \le \dim X'$.
\item[(iv)]  For every non-empty definable set $X$, the difference $\bar X \setminus X$ has strictly lower dimension than $X$, where $\bar X$ is the closure of $X$ with respect to the valuation topology.
 \end{enumerate}
 \end{proposition}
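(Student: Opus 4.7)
These are all standard consequences of the preparation/cell-decomposition machinery available in these settings, and are essentially contained in or immediate from results of \cite{iCR.hmin, hmin2} (for the 1-$h$-minimal case) and the classical theory of $\ACVF$.

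For (i), I would first prove the ball-containment clause. In the 1-$h$-minimal case, given an infinite $A$-definable $X\subseteq K$, apply Definition~\ref{1h} with any $\lambda>0$ to obtain $m\in\N^{>0}$ and a finite $A$-definable set $C\subset K$ that $(\lambda+v(m))$-prepares $X$. The equivalence classes of the relation ``$\rv_{\lambda+v(m)}(\,\cdot\,-c)$ agrees for all $c\in C$'' on $K$ are intersections of sets of the form $\{x:\rv_{\lambda+v(m)}(x-c)=\xi_c\}$, each of which is either the singleton $\{c\}$ (when $\xi_c=0$) or an open ball; by ultrametricity any such intersection is a singleton from $C$, an open ball, or empty. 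Since $C$ is finite and $X$ is an infinite union of such classes, $X$ must contain a ball. In $\ACVF_p$ the statement follows from $C$-minimality: every definable subset of $K$ is a finite Boolean combination of balls, so an infinite one contains a ball. The second clause of (i) is then immediate: if $\dim X=0$, then each coordinate projection $\pi_i(X)\subseteq K$ contains no ball (else $\dim X\ge 1$), and hence by the first clause is finite, so $X\subseteq\prod_i\pi_i(X)$ is finite.

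For (ii) and (iii), the cleanest route is via the equivalence of the topological dimension of Definition~\ref{def:dim} with the model-theoretic (algebraic-closure) dimension given in \cite[Definition~5.3.1]{iCR.hmin}; this equivalence is \cite[Proposition~5.3.4]{iCR.hmin} in equicharacteristic zero and \cite[Proposition~3.1.1]{hmin2} in mixed characteristic, and is classical for $\ACVF$, where both notions coincide with the Zariski dimension of the Zariski closure. In terms of this alternative dimension, describable as the maximum transcendence degree (over the parameters) of a point of $X$, part (ii) follows from additivity of transcendence degree applied to a generic $a\in X$, so that $f(a)$ is generic in $Y$: the transcendence degree of $a$ over the parameters decomposes as that of $f(a)$ plus that of $a$ over $f(a)$, giving $\dim X=\dim Y+d$. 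The product and bijection specialisations are immediate. Part (iii) is then likewise immediate, since the maximum over a union is the maximum of the maxima, and maxima are monotone under inclusion.

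For (iv), I would apply a cell-decomposition theorem in each setting (e.g.\ from \cite[Section~5]{iCR.hmin} in the 1-$h$-minimal case): decompose $X$ into finitely many disjoint cells $C_i$, each the graph of a definable continuous map over an open box of some dimension $d_i\le\dim X$, so that $C_i$ is topologically open inside its closure in the corresponding coordinate subspace $K^{d_i}$. The frontier $\bar C_i\setminus C_i$ is then a definable set of dimension at most $d_i-1$. Since $\bar X=\bigcup_i\bar C_i$, we have $\bar X\setminus X\subseteq\bigcup_i(\bar C_i\setminus C_i)$, and (iii) gives $\dim(\bar X\setminus X)\le\dim X-1<\dim X$. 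The only real technical obstacle is ensuring that the foundational results—equivalence of topological and model-theoretic dimension, and cell decomposition—are available uniformly across the equicharacteristic-zero 1-$h$-minimal, mixed-characteristic 1-$h$-minimal, and $\ACVF_p$ settings; once these are cited, each individual clause is short.
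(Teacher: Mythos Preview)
The paper's proof of this proposition is purely by citation: it simply points to the relevant results in \cite{lou}, \cite{iCR.hmin}, and \cite{hmin2}. Your approach is to sketch actual arguments, which is more informative, and your treatment of (i)--(iii) is essentially correct (and in the case of (i), genuinely more explicit than the paper's ``follows from quantifier elimination / almost immediately from the definition'').

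Your argument for (iv), however, has a real gap. You describe the cells as graphs of continuous definable maps over \emph{open boxes}. In the valuation topology, open balls (and hence open boxes) are clopen. With your description, the projection $\pi$ to the base $K^{d_i}$ satisfies $\pi(\bar C_i\setminus C_i)\subseteq \bar U_i\setminus U_i=\emptyset$, forcing every cell to be closed and hence every definable set to be closed---which is plainly false (take $K^\times=K\setminus\{0\}$). The actual cell decompositions available in $\ACVF$ and in the $1$-h-minimal setting produce cells that are more like Swiss cheeses or twisted boxes (preimages of $\RV$-definable sets), not graphs over open boxes, and the frontier of such a cell is not automatically of lower dimension by the elementary argument you give. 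Even if one corrects the description of cells, one still needs to control the dimension of the fibres of $\pi$ over the frontier of the base, and you do not address this. The paper avoids this entirely by citing \cite[Proposition~5.3.4]{iCR.hmin} (equicharacteristic~$0$), \cite[Proposition~3.1.1(3)]{hmin2} (mixed characteristic), and \cite{lou} (for $\ACVF_p$), where (iv) is proved directly; you would do well to do the same rather than route through cell decomposition.
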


 \begin{proof}
 \begin{enumerate}
\item[(i)] If $K$ is algebraically closed of characteristic $p$, this follows directly from quantifier elimination. In the 1-h-minimal case it follows almost immediately from the definition. The second assertion follows immediately. 
\item[(ii), (iii), (iv)] For $\ACVF_p$, see e.g. \cite[2.24(4), Proposition 2.15]{lou}. In the 1-$h$-minimal context, see \cite[Proposition 5.3.4]{iCR.hmin} in the equicharacteristic zero case, and \cite[Proposition 3.1.1(3) parts (4)--(6)]{hmin2} in the mixed characteristic case.
\end{enumerate}
 \end{proof}
\begin{remark} \label{strict-diff-rem}
If $K$ has characteristic 0, then for every definable function $f\colon K \to K$, there exists a finite subset $A \subset K$ such that $f$ is strictly differentiable on $K \setminus A$. This follows from \cite[Proposition 3.13]{AH}, in combination with Proposition~\ref{oldhyp}(i) above.
\end{remark}

\begin{lemma} \label{interior}
    Under the assumptions of Proposition~\ref{oldhyp}, suppose that $U\subset V$ are definable sets with $\dim(U)=\dim(V)$. Then $U$ has non-empty interior in $V$.
\end{lemma}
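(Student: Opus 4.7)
The plan is to argue by contradiction, using only the general properties of the dimension function assembled in Proposition~\ref{oldhyp}, specifically the boundary-dimension inequality in (iv) together with monotonicity and union behaviour in (iii). First I would dispose of the degenerate cases: if $V = \emptyset$ then $U = \emptyset$ and there is nothing to do, and if $U = V$ then the interior of $U$ in $V$ is $V$ itself, which is non-empty. So assume $\emptyset \neq U \subsetneq V$, and set $W := V \setminus U$, a non-empty definable set.

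Next, I would translate the topological hypothesis. A point $u \in U$ fails to lie in the interior of $U$ in $V$ precisely when every valuation-open ball around $u$ in $K^n$ meets $V \setminus U = W$; that is, $u \in \overline{W}$, where the closure is taken in $K^n$ with respect to the valuation topology. So the assumption that $U$ has empty interior in $V$ amounts to
\[
U \subset \overline{W}.
\]
Since $U$ and $W$ are disjoint, this refines to $U \subset \overline{W} \setminus W$.

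Now I would invoke Proposition~\ref{oldhyp}(iv) applied to the non-empty definable set $W$: we have $\dim(\overline{W} \setminus W) < \dim W$. Combining with monotonicity of dimension under inclusion (Proposition~\ref{oldhyp}(iii)) gives
\[
\dim U \;\le\; \dim(\overline{W} \setminus W) \;<\; \dim W \;\le\; \dim V,
\]
where the last inequality uses $W \subset V$ and (iii) once more. This contradicts the hypothesis $\dim U = \dim V$, completing the argument.

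I do not expect any real obstacle here; the only point one must check carefully is the equivalence between the topological condition ``$U$ has empty interior in $V$'' and the inclusion $U \subset \overline{V \setminus U}$, which is immediate from the definition of relative interior in a subspace of $K^n$. Everything else is a direct application of the dimension calculus already collected in Proposition~\ref{oldhyp}.
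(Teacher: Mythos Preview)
Your proof is correct and follows essentially the same route as the paper's: both set $W = V \setminus U$, identify the interior of $U$ in $V$ as $U \setminus (\overline{W}\setminus W)$, and then use Proposition~\ref{oldhyp}(iv) to obtain $\dim(\overline{W}\setminus W) < \dim W \le \dim V = \dim U$. The paper phrases this directly rather than by contradiction, but the argument is the same. (One triviality: your treatment of the case $V=\emptyset$ is not quite right, since then the interior is empty; the lemma tacitly assumes $U$ non-empty, and the paper does too.)
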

\begin{proof} The interior of $U$ in $V$ can be written as $U\setminus Y$, where $Y=\bar{Z}\setminus Z$ and $Z=V\setminus U$. By Proposition~\ref{oldhyp}(iv), $\dim(Y)<\dim(Z)\leq \dim(V)=\dim(U)$, so $U\setminus Y$ is non-empty.
\end{proof}

The following lemma states that the dimension of a definable set $X$ is equal to the maximal ``local dimension'' at points of $X$. Note that clearly, local dimensions cannot exceed the global one, so the only question is whether the global dimension is realized locally.

\begin{lemma}\label{l.locdim}
Let $K$ be a model of $\ACVF_p$ or a 1-$h$-minimal expansion of a valued field. 
Let $X \subset K^n$ be a definable set. Then there exists an $x \in X$ such that for every ball $B \subset K^n$ containing $x$, $\dim (X \cap B) = \dim X$.
\end{lemma}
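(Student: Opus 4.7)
The plan is to reduce to a finite-to-one projection and produce a continuous local section. Let $d := \dim X$; the idea is to find a point $x \in X$ where $X$ locally looks like the graph of a continuous function over a ball in $K^d$, so that small balls around $x$ necessarily intersect $X$ in $d$-dimensional pieces.

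By Definition~\ref{def:dim}, pick a coordinate projection $\pi\colon K^n \to K^d$ whose image contains a ball. Stratifying $X$ by fiber-dimension of $\pi$ and combining Proposition~\ref{oldhyp}(ii), (iii) with Lemma~\ref{interior}, one finds a ball $B_1 \subset \pi(X)$ above which the fibers of $\pi|_X$ are all finite (else $\dim X > d$); refining further by fiber cardinality (a first-order condition), one obtains a sub-ball $B_2 \subset B_1$ and an integer $m \ge 1$ such that every fiber of $\pi$ over $B_2$ has exactly $m$ points. Setting $Y := X \cap \pi^{-1}(B_2)$, Proposition~\ref{oldhyp}(iv) applied to $Y$ gives $\dim(\bar Y \setminus Y) < d$, and since projections cannot increase dimension, $\dim \pi(\bar Y \setminus Y) < d$; Lemma~\ref{interior} then lets us shrink $B_2$ once more so that $\bar Y \cap \pi^{-1}(B_2) = Y$, i.e., there are no ``escaping'' limit points of $Y$ above $B_2$.

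The next step is to produce a continuous local section of $\pi|_Y$. For a generic $y_0 \in B_2$ with fiber $\{x_1, \dots, x_m\}$ and disjoint open balls $U_j \ni x_j$, one shows, using Proposition~\ref{oldhyp}(iv) applied to the definable set $Z := Y \setminus \bigsqcup_j U_j$, that $y_0 \notin \overline{\pi(Z)}$; iterating this argument with shrinking choices of $U_j$ and using that the ``bad'' locus at each scale has dimension $< d$, one obtains a ball $V \ni y_0$ such that $\pi^{-1}(V) \cap Y \subset \bigsqcup_j U_j$, and moreover the local inverse $\sigma\colon V \to Y \cap U_1$ is continuous at $y_0$. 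Setting $x := x_1 = \sigma(y_0)$, for any ball $B \ni x$ continuity of $\sigma$ yields a ball $V' \ni y_0$ with $\sigma(V') \subset X \cap B$, and $\dim \sigma(V') = \dim V' = d$ by Proposition~\ref{oldhyp}(ii), giving $\dim(X \cap B) \ge d$; the reverse inequality is immediate from $X \cap B \subset X$.

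The main obstacle is the ``no escape''/continuity step: in a locally compact setting it would follow from properness, but over an arbitrary Krull-valued field this tool is unavailable and must be traded for the dimension axioms of Proposition~\ref{oldhyp}, with technical care needed to handle uniformity across the index set $\Gamma$ of scales.
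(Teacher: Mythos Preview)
The paper's own proof is a bare citation to \cite{iCR.hmin}, \cite{hmin2}, and \cite{iF.dim}, so your attempt at a self-contained argument from Proposition~\ref{oldhyp} is more ambitious. Your reduction to a finite constant-cardinality cover $Y \to B_2$ is fine (modulo tacitly invoking uniform finiteness of fibers, which does hold here), but the step you flag as ``the main obstacle'' is a genuine gap that your sketch does not close. You want $y_0 \notin \overline{\pi(Z)}$ with $Z = Y \setminus \bigsqcup_j U_j$. Applying (iv) to $Z$ controls $\bar Z$, and combined with your arrangement $\bar Y \cap \pi^{-1}(B_2) = Y$ one does obtain $y_0 \notin \pi(\bar Z)$; but the containment $\pi(\bar Z) \subset \overline{\pi(Z)}$ can be strict---this is exactly the failure of properness you are trying to circumvent---so you have controlled the wrong set. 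Applying (iv) to $\pi(Z)$ instead shows its frontier has dimension $< d$, but $Z$ depends on the $U_j$, which depend on the fiber over $y_0$: the bad locus moves with $y_0$, and one cannot choose $y_0$ ``generic'' relative to a set that depends on $y_0$. Your iteration over shrinking $U_j$ is indexed by $\Gamma$, which may have uncountable cofinality, and nothing in (i)--(iv) bounds the union of bad loci across all scales.

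This is not cosmetic: producing a continuous local section of a definable finite-to-one map over a non-locally-compact valued field requires input beyond the frontier inequality---typically cell decomposition, the Jacobian property, or a saturation argument together with definability of dimension in families. Those tools are available in both the $1$-h-minimal and $\ACVF_p$ settings and underlie the cited references, but they do not follow from Proposition~\ref{oldhyp}(i)--(iv) alone.
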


\begin{proof} For equicharacteristic 0, see \cite[Proposition 5.3.4(5)]{iCR.hmin}, and for mixed characteristic it follows from \cite[Proposition 3.1.1(3) part (5)]{hmin2}.

In the case when $K$ is a pure algebraically closed valued field of characteristic $p$, the result follows immediately from Theorem 3.1 of \cite{iF.dim}; the assumptions of that theorem are very similar to Dim(1)-Dim(4) of \cite{lou}, and easily verified for $\ACVF_p$.
\end{proof}

The following result shows that for $L_{{\rm div}}$-definable sets, topological dimension behaves well with respect to Zariski closure and Zariski dimension.

\begin{lemma}\label{lou}
Suppose that $X\subset K^n$ is $L_{\divv}$-definable with parameters.
Recall that we write $\cl_K(X)$ for the Zariski closure of $X$ in $K^n$,
and let us denote by $\mathbf Y = \cl_{K^{\alg}}(X)$ the the Zariski closure of $X$ in $(K^{\alg})^n$.  Then
\begin{enumerate}
\item[(i)] $X$ has non-empty interior (with respect to the valuation topology) in $\cl_K(X)$.
\item[(ii)]
$\dim X=\dim \cl_K(X) = \zardim(\mathbf Y)$
(where $\zardim$ denotes the Zariski dimension; see Section~\ref{sec:lin}).
\end{enumerate}
\end{lemma}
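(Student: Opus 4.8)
The plan is to prove both parts simultaneously, handling the interaction between the ``coarse'' Zariski topology on $K^n$ (using only $K$-closed sets) and the finer topology on $(K^{\alg})^n$. Recall from Remark~\ref{note-on-dim} that $\mathbf Y = \cl_{K^{\alg}}(X)$ is automatically $K$-closed, and $\mathbf Y(K) = \cl_K(X)$; so the three quantities in (ii) are, respectively, $\dim X$, $\dim \mathbf Y(K)$, and $\zardim \mathbf Y$, and (i) is exactly the statement that $X$ has non-empty interior in $\mathbf Y(K)$. I would structure the argument as: first establish (i); then deduce $\dim X = \dim \cl_K(X)$ from (i) plus Proposition~\ref{oldhyp}(iii); and finally prove $\dim \mathbf Y(K) = \zardim \mathbf Y$ by induction on the Zariski dimension.

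For (i): since $X$ is $L_{\divv}$-definable, quantifier elimination in $\ACVF$ (in characteristic $p$ by hypothesis, in characteristic $0$ because $L_{\divv}$-definable sets are interpreted in the algebraically closed valued field generated by the parameters, and $\ACVF$ has QE) expresses $X$ as a finite Boolean combination of sets of the form $\{x : v(f(x)) \le v(g(x))\}$ or $\{x : f(x) = 0\}$ with $f,g$ polynomials over $K$. Writing $X$ as a finite union of ``cells'' of the form $\{x \in V(K) : \text{conditions } v(f_i) \lessgtr v(g_i)\}$ for an irreducible $K$-variety $V$, a standard dimension count shows that some such cell $C$ has $\dim C = \dim X$, and for the variety $V$ attached to that cell we have $V(K) = \cl_K(C) \subseteq \cl_K(X)$ with $\dim V(K) = \dim X$ (using the ``lower-dimensional boundary'' phenomenon and that $X$ cannot have bigger dimension than the variety carrying it). Then $C$ is cut out inside $V(K)$ by finitely many conditions of the form $v(f) \lessgtr v(g)$; by the local-dimension lemma (Lemma~\ref{l.locdim}) and Proposition~\ref{oldhyp}(iv), at a generic point of $C$ of full local dimension, each non-strict or strict inequality that holds, holds on a neighbourhood in $V(K)$ unless it forces a polynomial identity, which would drop the dimension; hence $C$, and therefore $X$, contains a valuation-open subset of $V(K)$, which, since $\cl_K(X)$ is irreducible over $K$ of the same dimension (by the reduction just made, after noting $\cl_K(X) = \cl_K(C) = V(K)$), is open in $\cl_K(X)$. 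I expect this to be the main obstacle: turning the syntactic description of $X$ into a clean statement that, locally at a full-dimensional point, the defining valuation-inequalities are ``open conditions'', requires care about the distinction between the $K$-Zariski topology and the $K^{\alg}$-Zariski topology, and about strictness of the inequalities $v(f)\le v(g)$ versus $v(f) < v(g)$.

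For the remaining equality $\dim \mathbf Y(K) = \zardim \mathbf Y$: the inequality $\dim \mathbf Y(K) \le \zardim \mathbf Y$ is standard and holds for any $K$-variety — a definable subset of an $n$-dimensional affine variety cannot project onto a ball in $K^{d}$ for $d$ exceeding the Zariski dimension, which one proves by induction using that a proper $K$-closed subvariety has strictly smaller dimension in both senses and that adding a transcendental coordinate raises both by one (here one uses Proposition~\ref{oldhyp}(i), that an infinite definable subset of $K$ contains a ball, to handle the one-dimensional base case). For the reverse inequality, since $\mathbf Y = \cl_{K^{\alg}}(X)$ and $K$ is algebraically closed in characteristic $p$ (and in characteristic $0$ one may pass to $K^{\alg}$, noting the relevant topological-dimension theory for $\ACVF$ from \cite{lou}, \cite{iF.dim} is insensitive to this since $\mathbf Y$ is defined over $K$ and $\mathbf Y(K)$ is Zariski-dense in $\mathbf Y$ over algebraically closed fields), one has $\zardim \mathbf Y = \zardim \cl_{K^{\alg}}(X)$, and for $\ACVF$ it is known that the topological dimension of a definable set equals the Zariski dimension of its Zariski closure — precisely \cite[2.24, Proposition 2.15]{lou} in characteristic $0$ and the analogous statement over $\ACVF_p$ (e.g.\ via \cite{iF.dim}). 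Combined with part (i) this pins down $\dim X = \dim \cl_K(X) = \zardim \mathbf Y$, completing the proof.
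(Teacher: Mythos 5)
Your proposal takes a genuinely different and more ambitious route than the paper, and it has gaps. The paper's proof is a citation argument: part~(i) is taken directly from van den Dries \cite{lou} (Proposition~2.18 together with 2.24 and Theorems~3.5, 3.9), and part~(ii) is proved by checking that the topological dimension of Definition~\ref{def:dim} satisfies the axioms Dim(1)--Dim(4) of \cite{lou} (using Proposition~\ref{oldhyp}) and then invoking van den Dries's uniqueness theorem, which says that the only such dimension function on $L_{\divv}$-definable sets is the algebraic dimension $\zardim(\cl_{K^{\alg}}(\cdot))$; applying this both to $X$ and to $\cl_K(X)$ gives all three equalities at once. You instead attempt to reprove~(i) from scratch via quantifier elimination, a cell decomposition, and a local argument that the valuation-inequality conditions are ``open'' at a generic full-dimensional point. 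That last step is exactly the hard core of van den Dries's Proposition~2.18 / Theorem~3.5, and you never actually establish it --- you flag it yourself as ``the main obstacle.'' As written, this is a plan for a proof, not a proof.

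There is also a concrete gap in your step from~(i) to $\dim X = \dim\cl_K(X)$. From~(i) you get a non-empty $U\subset X$ which is valuation-open in $\cl_K(X)$, hence $\dim X\ge\dim U$; but you still need $\dim U=\dim\cl_K(X)$, and Proposition~\ref{oldhyp}(iii) does not give you that. Proposition~\ref{oldhyp}(iv) controls the valuation boundary $\bar X\setminus X$, not the Zariski boundary $\cl_K(X)\setminus X$, so it does not apply either. Establishing ``every non-empty valuation-open subset of $\mathbf Y(K)$ has dimension $\zardim\mathbf Y$'' is essentially equivalent to the statement you are trying to prove, so the deduction is circular unless you supply an independent argument (this is precisely what Lemma~\ref{lou2} later does, \emph{using} Lemma~\ref{lou}). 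Finally, your concluding citation of \cite[2.24, Prop.~2.15]{lou} for ``topological dimension equals Zariski dimension of the closure'' is slightly off: those references concern van den Dries's abstract algebraic dimension, and the link to the topological dimension of Definition~\ref{def:dim} is exactly what the uniqueness argument in the paper's proof is there to supply. I would recommend following the paper's route: verify Dim(1)--Dim(4) and cite the uniqueness theorem, which gives (ii) cleanly; (i) can then be cited or, if you want equicharacteristic and mixed characteristic both covered, taken from Proposition~2.18 together with Theorems~3.5 and~3.9 of \cite{lou}.
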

Recall in the above that from Remark~\ref{note-on-dim},  $\mathbf Y$ is a variety defined over $K$ and  $\mathbf Y(K) = \cl_K(X)$.

\begin{proof}[Proof of Lemma~\ref{lou}]
(i) See \cite{lou}; namely Proposition 2.18, in combination with 
 2.24 (where Example (4) handles $\ACVF_p$) and  Theorems 3.5 (for equicharacteristic 0) and  3.9 (for mixed characteristic).
 
 (ii)
 It is easily seen that topological dimension as defined above is a {\em dimension function} on the $L_{\divv}$-definable sets, in the sense of \cite[Dim(1)--Dim(4)]{lou}. Conditions Dim(1) and Dim(3) are immediate from the definition of topological dimension, Dim(2) follows from Proposition~\ref{oldhyp}(iii), and Dim(4) is a consequence of  Proposition~\ref{oldhyp}(ii).

The algebraic dimension of a subset $X \subset K^n$ introduced \cite[Definition 2.2]{lou} is just the Zariski dimension $\zardim(\cl_{K^{\alg}}(X))$ of its Zariski closure,
and the assertions of \cite[Theorem 3.5, 3.9]{lou} are that algebraic dimension is the {\em only} dimension function on the $L_{\divv}$-definable sets in henselian valued fields of characteristic 0. The uniqueness proof in Theorem 3.5 applies also for $\ACVF_p$. It follows that the two notions of dimension coincide and hence that both $\dim X$ and $\dim \cl_K(X)$ are equal to $\zardim \mathbf Y$.
\end{proof}

\section{Manifolds and Lie Groups over valued fields}

Throughout this section, we shall work in a 1-$h$-minimal expansion $(K,v,\ldots)$ of a valued field of characteristic 0. In Subsection~\ref{sec:mani} we summarise some material from \cite{AH} which ensures that our definable group $G$ can be endowed with a definable differential manifold structure in an appropriate sense. Subsection~\ref{sec:tan-Lie} applies this, along with a line of argument in the o-minimal case from \cite{pps}, to obtain some results on the Lie algebra structure on the tangent space.

\subsection{Manifolds and Lie groups}
\label{sec:mani}

We need a notion of definable manifold over $K$. Several such notions have been provided in \cite{AH}. The one we will be using is what the authors of \cite{AH}  call a \emph{definable weak strictly differentiable manifold}; we will abbreviate this by \emph{dwsd manifold}. Mostly, the definition of such manifolds is what one would expect: it is a definable set $M$ equipped with definable charts $U_i \to M$ satisfying usual conditions on transition maps. However, the adjective ``weak'' expresses that a chart is not exactly what one might think: Instead of requiring the sets $U_i$ to be subsets of $K^n$ (for $n = \dim M$), one needs to allow them to be some kind of finite (étale) covers of such sets.

A concrete example where such finite covers are necessary is
the ``circle'' $M = \{(x,y) \in K^2 \mid x^2 + y^2 = 1\}$, which clearly should be a manifold. We would like to consider $U := M \setminus \{(\pm 1, 0)\}$ as the disjoint union of the upper and lower half circle, but ``upper'' and ``lower'' make no sense, and there is no way to split $U$ \emph{definably} into two branches. Instead, the notion of chart is modified in such a way that the entire $U$ is a chart.

So here is the definition of dwsd manifolds (\cite[Definitions~5.1--5.3]{AH}):

\begin{definition}\label{d.mnf}
\begin{enumerate}
  \item
  A definable map $r\colon U \to K^n$ (where $U \subset K^m$ for some $m$) is called \emph{(topologically) étale} if every $u \in U$ has a neighbourhood $B \subset U$
  such that the restriction $r|_{B}$ is a homeomorphism onto its image $r(B)$ (with respect to the valuation topology on $U$ and on $K^n$).
  \item
  Given two definable étale maps $r_i\colon U_i \to K^{n_i}$ ($i = 1,2$), we call a definable function $f\colon U_1 \to U_2$ \emph{strictly differentiable}, if every $y \in U_1$ has a neighbourhood $B \subset U_1$ such that $r_1|_B$ is injective and $r_2 \circ f \circ (r_1|_{B})^{-1}$ is strictly differentiable on $r_1(B)$ (in the sense of Definition~\ref{d.deriv}).
    \item
    A \emph{definable $n$-chart} on a definable set $M$ consists 
    of a definable injection $\phi\colon U \hookrightarrow M$ and
    an étale definable map $r\colon U \to K^n$ (where $U \subset K^m$ for some $m$). We often call ``$\phi\colon U \hookrightarrow M$'' the chart, having the map $r$ implicitly in mind.
    \item
    A  \emph{definable weak strictly differentiable $n$-manifold}, or \emph{dwsd $n$-manifold} for short, consists of a definable set $M$ together with finitely many definable $n$-charts. $\phi_i\colon U_i \hookrightarrow M$ such that
    \begin{enumerate}
        \item $M$ is covered by the images $\phi_i(U_i)$;
        \item For each pair $i,j$, the set $U_{ij} := \phi_i^{-1}(\phi_j(U_j))$ is open in $U_i$; and
        \item the transition maps $U_{ij} \to U_{ji}, \phi_j^{-1}\circ \phi_i$ are strictly differentiable in the sense of (2).
    \end{enumerate}
    We endow $M$ with the unique topology turning all maps $\phi_i$ into open immersions (which is possible by (b)). In the following, when we refer to charts of a manifold $M$, we always mean one of those $\phi_i$.

    A \emph{dwsd manifold} is a dwsd $n$-manifold for some $n$.
    \item
       Given two dwsd manifolds $M, N$, a definable map $f\colon M \to N$ is called \emph{strictly differentiable} if  for all charts $\phi_i\colon U_i \to M$ and $\psi_j\colon V_j \to N$,
       the set $W_{ij} := \phi_i^{-1}(f^{-1}(\psi_j(V_j)))$ is open in $U_i$ and
       the composition $\psi_j^{-1} \circ f\circ \phi_i|_{W_{ij}}$ is strictly differentiable on $W_{ij}$ (in the sense of (2)). Dwsd manifolds are considered as a category using strictly differentiable definable maps as morphisms.
\end{enumerate}
\end{definition}

\begin{remark}\label{rem:manichoices}
In the above definition, various choices have been made; we follow \cite{AH} concerning those choices; in particular:
    \begin{enumerate}
        \item The topology on the $U_i$ is always the one induced from the ambient space; in contrast, its differentiable structure is induced by $r_i$.
        \item On $M$, both the topology and the differentiable structure are the ones induced by the $\phi_i$.
        \item It is imposed that a manifold $M$ has a cover by \emph{finitely many} charts. This implies that if $M$ is an $n$-manifold, then $n$ is also equal to the dimension of $M$ as a definable set (in the sense of Definition~\ref{def:dim}). 
    \end{enumerate}
\end{remark}

\begin{remark}
Formally, a dwsd manifold comes with a choice of finitely many charts.
However, replacing those charts by other compatible ones yields an isomorphic manifold and we care about manifolds only up to isomorphism. 
\end{remark}

\begin{remark}
In Definition~\ref{d.mnf}, one could replace ``strictly differentiable'' everywhere by some other condition. Most importantly,
to obtain that every Lie group gives rise to a  Lie algebra (\cite[Proposition~6.19]{AH}), \cite{AH} use
$T_2$-manifolds. Here, $T_2$ is a rather technical condition about maps which morally is something like ``strictly differentiable of order two'', but which does not follow from twice strictly differentiable. Since we will use \cite[Proposition 6.19]{AH}) as a black box, we do not introduce this notion here.
\end{remark}
\begin{remark} We justify the assertion above that the $U_i$ are finite covers. 
Consider $r\colon U \to K^n$ as in \ref{d.mnf}(1) above and pick any $a$ in the range of $r$.  Then $r^{-1}(a)$ is finite, for if it were infinite, then by 1-$h$-minimality it would not be discrete; we could then choose non-isolated $u\in r^{-1}(a)$, and
 the \'etaleness condition fails for this $u$. 
    
\end{remark}

Next, we recall the notion of tangent space from \cite{AH}. The definition might look a bit odd at first sight (in particular Definition~\ref{d.diff} (1)), but it comes out naturally from the notions of differentiable maps given in Definition~\ref{d.mnf} (2), (5).

\begin{definition}[{\cite[Definition~5.4]{AH}}]\label{d.diff}
\begin{enumerate}
\item 
Given a definable étale map $r_1\colon U_1 \to K^{n_1}$ and a point $y \in U_1$, the \emph{tangent space} $T_yU_1$ is simply defined to be $K^{n_1}$. 

If $r_2\colon U_2 \to K^{n_2}$ is a second definable étale map and $f\colon U_1 \to U_2$ is strictly differentiable, the \emph{differential} $d_yf\colon T_yU_1 \to T_{f(y)}U_2$ is defined to be the strict derivative of the map $r_2 \circ f \circ (r_1|_{B})^{-1}$ from Definition~\ref{d.mnf} (2).
\item
Given a dwsd manifold $M$ with charts $\phi_i\colon U_i \to M$, and given a point $x \in M$, the \emph{tangent space} $T_xM$ is obtained as follows:
Let $I$ be the set of indices $i$ such that $x$ lies in $\phi_i(U_i)$,
set $y_i := \phi_i^{-1}(x)$, take the disjoint union
$\coprod_{i \in I} T_{y_i}U_i$
of the corresponding tangent spaces, and identify two points $v \in T_{y_i}U_i$ and $w \in T_{y_j}U_j$ if the 
differential
$d_{y_j}(\phi_i^{-1}\circ \phi_j)$
of the corresponding transition map sends $w$ to $v$. Note that for every $i$, we obtain a natural identification of $T_xM$ with $T_{y_i}U_i$.
\item
If $N$ is a second dwsd manifold, $f\colon M \to N$ is strictly differentiable, and $x\in M$, then the \emph{differential} $d_xf\colon T_xM \to T_{f(x)}N$ is obtained by taking any charts $\phi_i\colon U_i \to M$ and $\psi_j\colon V_j \to N$ having $x$ and $f(x)$ in their images, respectively, and defining $d_xf$ to be the map induced by
$d_{\phi_i^{-1}(x)}(\phi_j^{-1} \circ f\circ \phi_i)\colon 
T_{\phi_i^{-1}(x)}U_i \to T_{\phi_j^{-1}(f(x))}V_j$.
\end{enumerate}
\end{definition}

\begin{definition}
A \emph{dwsd Lie group} is a definable group $G$ equipped with a dwsd manifold structure, such that the group multiplication $G\times G\to G$ and the group inversion $G\to G$ are both morphisms of dwsd manifolds (\emph{i.e.}, are strictly differentiable). A \emph{homomorphism of dwsd Lie groups} is a group homomorphism which is  also a morphism of dwsd manifolds (\emph{i.e.}, it is definable and strictly differentiable).
\end{definition}

We can now state one of the central results of \cite{AH}, which asserts that definable groups are Lie groups. (Recall that the only non-imaginary sort of our structure $(K,v,\dots)$ is $K$ itself, so the proposition does not apply e.g. to the value group.)

\begin{proposition}[{\cite[Proposition~6.4]{AH}}]\label{p.Gman}
Any definable group can be equipped with the structure of a dwsd manifold in such a way that it becomes a dwsd Lie group.
\end{proposition}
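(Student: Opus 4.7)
The plan is to implement the Weil group chunk construction in the 1-h-minimal valued setting, following the o-minimal template of Pillay \cite{pill-omin} and its extension by Wencel \cite{Wen.topGrp}. The main steps are: (i) find a ``smooth seed'' open subset of $G$ admitting an \'etale map to $K^d$, where $d=\dim G$; (ii) translate this seed via left multiplications to obtain a finite atlas for $G$; and (iii) verify strict differentiability of multiplication and inversion, which also subsumes checking compatibility of transition maps.

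For (i), view $G\subset K^n$. By Lemma~\ref{l.locdim} pick $x_0\in G$ at which every neighbourhood in $G$ attains the full dimension $d$. By Proposition~\ref{oldhyp}(ii), some coordinate projection $\pi\colon K^n\to K^d$ sends a sufficiently small definable neighbourhood $U_0$ of $x_0$ in $G$ with finite fibres to a $d$-dimensional set, which therefore contains a ball by Proposition~\ref{oldhyp}(i) and Lemma~\ref{interior}. Applying the multivariate analogue of Remark~\ref{strict-diff-rem} (which is part of the 1-h-minimal toolkit) and discarding a closed subset of smaller dimension via Proposition~\ref{oldhyp}(iv), shrink $U_0$ so that $\pi|_{U_0}$ is \'etale in the sense of Definition~\ref{d.mnf}(1). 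This produces a dwsd chart $\phi_0=\mathrm{id}\colon U_0\hookrightarrow G$ with associated \'etale map $r_0:=\pi|_{U_0}$. For (ii), for each $g\in G$ the left translation $L_g$ is a definable bijection of $G$, so $\phi_g:=L_g|_{U_0}$ together with $r_0$ is a candidate chart containing $gx_0$ in its image. The family $\{gU_0:g\in G\}$ covers $G$, and a definability/compactness argument (applying the dimension calculus to the parameter family) shows that finitely many translates suffice.

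The heart of the proof, and the main obstacle, is step (iii). The transition maps $\phi_{g_i}^{-1}\circ\phi_{g_j}$ and the Lie-group conditions on $\mu\colon G\times G\to G$ and on inversion all reduce, after reading everything through the \'etale map $r_0$, to showing that $\mu$ is strictly differentiable at every relevant pair $(a,b)\in G\times G$. I would first establish strict differentiability of $\mu$ on a dense definable open set $V\subset G\times G$, by a multivariate 1-h-minimal strict-differentiability theorem combined with Proposition~\ref{oldhyp}(iv) to remove a ``bad'' locus of strictly smaller dimension. To promote smoothness of $\mu$ from $V$ to an arbitrary $(a,b)$, I use the classical Weil trick: by a dimension count using Proposition~\ref{oldhyp}, pick a generic $c\in G$ such that all pairs obtained by translating $(a,b)$ and $(a,c)$ and $(c^{-1},b)$ into the relevant charts lie in $V$, and then use the identity $xy=(xc)(c^{-1}y)$ together with the chain and product rules to write $\mu$ near $(a,b)$ as a composition of restrictions of $\mu|_V$ and left/right translations that are themselves strictly differentiable by the same generic argument applied one step earlier. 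Inversion is handled analogously via the identity $x^{-1}=c(cx)^{-1}$ for generic $c$. The technical subtlety is that all this bookkeeping takes place in the weak (\'etale-covered) differentiable category of Definition~\ref{d.mnf}, rather than on honest open subsets of $K^d$; this is exactly why the dwsd formalism was introduced in \cite{AH}.
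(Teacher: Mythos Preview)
The paper does not prove this proposition at all: it is quoted verbatim as \cite[Proposition~6.4]{AH} and used as a black box, with the introduction merely noting that the method ``has precedents in the work of Wencel \cite{Wen.topGrp}, itself based on a proof of Pillay \cite{pill-omin} and ultimately on Weil's `group-chunks' argument.'' So there is no proof in the paper to compare against.

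Your sketch follows exactly the strategy the paper attributes to \cite{AH}, and the overall shape (generic smooth chart, translate to a finite atlas, Weil trick to propagate strict differentiability of $\mu$ and inversion from generic pairs to all pairs) is correct. That said, as a self-contained proof several of your steps are thin. In (i), going from ``$\pi$ has finite fibres and image containing a ball'' to ``$\pi|_{U_0}$ is topologically \'etale'' is not automatic: you need a cell-decomposition/preparation statement ensuring that on a large-dimensional piece $\pi$ is a local homeomorphism, and this is precisely where the weak-manifold formalism of \cite{AH} does real work. In (ii), the ``definability/compactness argument'' for finitely many translates is the standard one but needs an explicit dimension-drop argument on the uncovered locus. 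In (iii), you correctly identify the Weil trick, but the multivariate strict-differentiability input you invoke (``the multivariate analogue of Remark~\ref{strict-diff-rem}'') is itself a nontrivial 1-h-minimal result proved in \cite{AH}, not in the present paper. None of this is wrong, but what you have written is an outline rather than a proof; the substance lives in \cite{AH}.
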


As noted after  the proof of \cite[Proposition 6.4]{AH}, the dwsd Lie group structure on a definable group $G$ is unique up to a unique isomorphism. In the rest of this section, when referring to a definable group $G$, any implicit references to a manifold structure on $G$ refer to this dwsd manifold structure.

\subsection{Tangent spaces and Lie algebras} \label{sec:tan-Lie}

In this subsection we overview some fairly standard material on tangent spaces and Lie algebras, but in the definable non-archimedean 1-$h$-minimal context, using the framework from \cite{AH} described in the previous subsection. The goal of the subsection is Theorem~\ref{simpleliealg} below.
Our treatment is influenced by \cite{pps}, which itself is an adaptation of the exposition in \cite{vo} for the context in \cite{pps} of groups definable in o-minimal structures.
Any definable group which we mention is assumed to be equipped with the unique dwsd manifold structure obtained using Proposition~\ref{p.Gman}.
(In particular, the topology on the group is not necessarily the one from the ambient $K^n$.)
We denote the identity element of a group by $e$.

\begin{lemma} \label{opensubgroup}
Let $H \leq G$ be definable groups. Then
\begin{enumerate}
\item[(i)] $H$ is a closed subset of $G$ (with respect to the topology on $G$ given by the dwsd manifold structure on $G$), and furthermore, the inclusion $i:H \to G$ is a closed embedding of dwsd manifolds.
\item[(ii)] $H$ is open in $G$ if and only if $T_e(H)=T_e(G)$ if and only if $\dim(H)=\dim(G)$. Those conditions in particular hold if $H$ has finite index in $G$.
\end{enumerate}

\end{lemma}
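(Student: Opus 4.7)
My plan is to prove the closedness part of (i) first, then the dimension/openness equivalences of (ii), and finally the embedding part of (i) together with the tangent space equivalence in (ii), which are the most delicate. For closedness: since multiplication and inversion on $G$ are strictly differentiable, the dwsd topology makes $G$ a topological group, and a standard argument then shows that the closure $\bar H$ is itself a subgroup of $G$. By Proposition~\ref{oldhyp}(iv) applied to $H$, $\dim(\bar H \setminus H) < \dim H$, and (iii) yields $\dim \bar H = \dim H$. Assuming $\bar H \setminus H \ne \emptyset$ and picking $x$ there, the set $Hx$ is contained in $\bar H \setminus H$ (since $H$ is a subgroup and $x \notin H$) and equals the image of $H$ under the definable bijection $h \mapsto hx$, so $\dim(Hx) = \dim H$ by Proposition~\ref{oldhyp}(ii), contradicting $\dim(\bar H \setminus H) < \dim H$. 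Hence $\bar H = H$.

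For (ii), I first establish the equivalence of $\dim H = \dim G$ with $H$ being open in $G$. If $\dim H = \dim G$, then Lemma~\ref{interior} supplies a nonempty open $U \subset G$ contained in $H$; translating $U$ by elements of $H$ (each a homeomorphism of $G$, since $L_h$ is strictly differentiable with strictly differentiable inverse $L_{h^{-1}}$) yields open subsets of $G$ lying inside $H$ and together covering $H$, so $H$ is open in $G$. Conversely, a nonempty open subset of the dwsd $n$-manifold $G$ has dimension $n = \dim G$ by Remark~\ref{rem:manichoices}(3) and the chart definition. For the finite-index case, $G$ decomposes as a finite disjoint union of cosets $g_j H$, each definably in bijection with $H$ and so of equal dimension by Proposition~\ref{oldhyp}(ii); Proposition~\ref{oldhyp}(iii) then gives $\dim G = \dim H$.

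The final and hardest step is showing that $i\colon H \to G$ is a closed embedding of dwsd manifolds and deducing the tangent space equivalence. The strategy: at a point $h_0 \in H$ realising the local dimension (Lemma~\ref{l.locdim}), invoke the 1-$h$-minimal analytical machinery of \cite{AH} to produce a chart of $G$ near $h_0$ in which $H$ appears locally as a dwsd submanifold of dimension $\dim H$; then propagate this via left-translations by elements of $H$ (which act as dwsd automorphisms of $G$) to a finite atlas covering $H$, making $i$ a strictly differentiable immersion with closed image. By the uniqueness (up to unique isomorphism) of the dwsd Lie group structure on a definable group, noted after Proposition~\ref{p.Gman}, this atlas coincides with the canonical one on $H$, and hence $d_e i\colon T_e H \hookrightarrow T_e G$ is a $K$-linear injection between spaces of dimensions $\dim H$ and $\dim G$ (Remark~\ref{rem:manichoices}(3)), which is an equality precisely when $\dim H = \dim G$. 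The main obstacle is the submanifold chart construction itself: the weak/étale nature of dwsd charts precludes a naive implicit function theorem argument and the proof must rely on the foundational results of \cite{AH} to obtain the required compatibility.
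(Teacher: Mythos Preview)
Your approach differs from the paper's: the paper simply cites \cite{AH} (Propositions~6.9, 6.10 and Corollary~6.14) for part (i) and the first equivalence in (ii), deduces the second equivalence from $\dim G = \dim T_e(G)$ (Remark~\ref{rem:manichoices}(3)), and handles the finite-index case by noting that the complement $G\setminus H$ is a finite union of (closed) translates of $H$, hence closed. You instead try to prove closedness and the openness/dimension equivalence directly via dimension theory.

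There is a genuine gap in your closedness and openness arguments: Proposition~\ref{oldhyp}(iv) and Lemma~\ref{interior} are statements about the \emph{valuation} topology on $K^n$, whereas the topology in Lemma~\ref{opensubgroup} is the dwsd manifold topology on $G$. These need not coincide --- indeed, the point of Proposition~\ref{p.Gman} is precisely that $G$ may fail to be a topological group for the subspace topology, so a new manifold topology is constructed. When you write $\bar H$ for the dwsd-closure and invoke $\dim(\bar H\setminus H)<\dim H$, or use Lemma~\ref{interior} to produce a dwsd-open subset of $G$ inside $H$, you are conflating the two topologies. This is fixable by working chart-by-chart (each $\phi_i\colon U_i\to G$ is a definable bijection onto a dwsd-open set, and the topology on $U_i$ is the valuation subspace topology by Remark~\ref{rem:manichoices}(1), so the valuation-topology results transfer), but that argument must be made; as written, the step does not go through.

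For the embedding part of (i) and the tangent-space equivalence you correctly identify that the machinery of \cite{AH} is required, which is exactly what the paper invokes. So your proof is not in the end more self-contained, and the paper's route of citing \cite{AH} uniformly is cleaner.
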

\begin{proof} 
Part (i) and the first equivalence in (ii) follow from Proposition 6.9 (with Proposition 6.10) and Corollary 6.14 respectively of \cite{AH}. For the second equivalence in (ii), note that by Remark~\ref{rem:manichoices}(3), we have $\dim G = \dim T_e(G)$, and similarly for $H$.
Finally, if $H$ has finite index in $G$, then its complement $G \setminus H$ is a finite union of translates of $H$ and hence closed in $G$, so $H$ is open in $G$.
\end{proof}

If $H_1$, $H_2$ are subgroups of a group $G$, then we say that $H_1,H_2$ are \emph{commensurable},
if $H_1 \cap H_2$ has finite index in each of $H_1$ and $H_2$.

\begin{lemma}\cite[Claim 2.20]{pps} \label{2.20}
Let $H_1,H_2$ be definable subgroups of a definable group $G$
which are commensurable.
Then $T_e(H_1)=T_e(H_2)$. 
\end{lemma}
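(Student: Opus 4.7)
The plan is to use the intersection $H_1 \cap H_2$ as a bridge between $H_1$ and $H_2$ and then apply Lemma~\ref{opensubgroup}(ii) twice. Since $H_1$ and $H_2$ are both definable, so is $H := H_1 \cap H_2$, and it is a subgroup of each $H_i$. By the commensurability hypothesis, $H$ has finite index in both $H_1$ and $H_2$.

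By Lemma~\ref{opensubgroup}(i), each $H_i$ sits inside $G$ as a closed dwsd submanifold, so carries its own dwsd Lie group structure, and $H$ in turn is a definable subgroup of each $H_i$. Applying Lemma~\ref{opensubgroup}(ii) to $H \le H_1$ (using that $H$ has finite index in $H_1$) yields that $H$ is open in $H_1$ and hence that $T_e(H) = T_e(H_1)$. Applying the same lemma to $H \le H_2$ gives $T_e(H) = T_e(H_2)$. Combining, $T_e(H_1) = T_e(H_2)$, as subspaces of $T_e(G)$ (via the closed embeddings from Lemma~\ref{opensubgroup}(i)).

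The only point to check carefully is the identification of tangent spaces: the subspace $T_e(H)$ computed inside $H_1$ agrees, under the closed embedding $H_1 \hookrightarrow G$, with $T_e(H)$ computed inside $G$ (and similarly for $H_2$). This is a consequence of the functoriality of $d_e$ for the chain of closed embeddings $H \hookrightarrow H_i \hookrightarrow G$ coming from Lemma~\ref{opensubgroup}(i) and Definition~\ref{d.diff}(3), so no real difficulty arises. Thus the argument is essentially a direct two-step application of Lemma~\ref{opensubgroup}(ii), with no serious obstacle.
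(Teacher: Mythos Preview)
Your proof is correct and follows essentially the same approach as the paper: both use $H_1 \cap H_2$ as a bridge and apply Lemma~\ref{opensubgroup}(ii) to each inclusion $H_1 \cap H_2 \le H_i$. The only cosmetic difference is that the paper first invokes Proposition~\ref{oldhyp}(iii) to obtain $\dim(H_1 \cap H_2) = \dim(H_i)$ and then the dimension clause of Lemma~\ref{opensubgroup}(ii), whereas you use the finite-index clause of that same lemma directly; these are equivalent and equally short.
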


\begin{proof}
By Proposition~\ref{oldhyp}(iii) we have $\dim(H_1\cap H_2)=\dim(H_i)$ for each $i$. Hence, by Lemma~\ref{opensubgroup}, $H_1\cap H_2$ is open in $H_i$, so $T_e(H_1 \cap H_2)=T_e(H_i)$ (for $i=1,2$).
\end{proof}

For the next result, \cite[Corollary 6.11]{AH} provides an important tool. In the o-minimal setting, this is \cite[Theorem 2.21]{pps}.

\begin{proposition}\label{2.21} Let $G,H$ be definable groups and $\phi\colon G \to H$ a homomorphism of dwsd Lie groups (\emph{i.e.} a group homomorphism which is definable and strictly differentiable). Let $H_1\leq H$ be definable, and $G_1:=\phi^{-1}(H_1)$. Then $T_e(G_1)=(\dd_e(\phi))^{-1}(T_e(H_1))$.
\end{proposition}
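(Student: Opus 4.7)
The plan is to establish both inclusions, with the forward one being a formal consequence of naturality and the reverse one via a dimension count ultimately relying on \cite[Corollary~6.11]{AH}.

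For the forward inclusion $T_e(G_1) \subseteq (\dd_e\phi)^{-1}(T_e(H_1))$: the map $\phi$ restricts to a strictly differentiable homomorphism $\phi|_{G_1}\colon G_1 \to H_1$, and by Lemma~\ref{opensubgroup}(i) the inclusion $G_1 \hookrightarrow G$ is a closed embedding of dwsd manifolds, so $T_e(G_1)$ is identified with its image in $T_e(G)$ and $\dd_e(\phi|_{G_1})$ with $(\dd_e\phi)|_{T_e(G_1)}$. Functoriality of the differential then gives $\dd_e\phi(T_e(G_1)) \subseteq T_e(H_1)$, as needed.

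For the reverse inclusion, since both sides lie in $T_e(G)$, it suffices to match dimensions, and by Remark~\ref{rem:manichoices}(3), $\dim T_e(G_1) = \dim G_1$. I would first establish the auxiliary fact that for any strictly differentiable homomorphism $\psi\colon G'\to H'$ of dwsd Lie groups, one has $T_e(\ker\psi) = \ker(\dd_e\psi)$ (from \cite[Corollary~6.11]{AH}), and consequently $\im(\dd_e\psi) = T_e(\psi(G'))$: the latter follows because Proposition~\ref{oldhyp}(ii) applied to $\psi\colon G'\to\psi(G')$ (whose fibres are cosets of $\ker\psi$) gives $\dim G' - \dim\ker\psi = \dim\psi(G') = \dim T_e(\psi(G'))$, while $\im(\dd_e\psi) \subseteq T_e(\psi(G'))$ is automatic by functoriality.

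Applying this to $\phi$ and to $\phi|_{G_1}\colon G_1 \to \phi(G_1) = H_1\cap \phi(G)$, and noting $\ker(\phi|_{G_1}) = \ker\phi$ (since $\ker\phi \subseteq G_1$), I obtain
\[\dim G_1 = \dim\ker\phi + \dim(H_1\cap\phi(G)),\]
while rank-nullity for $\dd_e\phi$, combined with $\ker(\dd_e\phi) = T_e(\ker\phi)$ and $\im(\dd_e\phi) = T_e(\phi(G))$, gives
\[\dim(\dd_e\phi)^{-1}(T_e(H_1)) = \dim\ker\phi + \dim\bigl(T_e(H_1)\cap T_e(\phi(G))\bigr).\]
Thus the proof reduces to the intersection identity
\[\dim(H_1 \cap H') = \dim\bigl(T_e(H_1)\cap T_e(H')\bigr)\]
for two definable subgroups $H_1,H' \le H$ (with $H' := \phi(G)$), which is the main obstacle; a direct appeal to the proposition itself would be circular. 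My approach is to apply \cite[Corollary~6.11]{AH} to the strictly differentiable map $\mu\colon H_1 \times H' \to H$, $(h_1,h')\mapsto h_1(h')^{-1}$: the fibre $\mu^{-1}(e)$ is the diagonal copy of $H_1 \cap H'$, while $\ker\dd_{(e,e)}\mu = \{(v,v) : v\in T_e(H_1)\cap T_e(H')\}$ has dimension $\dim(T_e(H_1)\cap T_e(H'))$, yielding the required equality and completing the proof.
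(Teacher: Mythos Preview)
Your proof is correct, but it takes a more circuitous path than the paper's. The paper establishes the dimension equality in a single application of \cite[Corollary~6.11]{AH}: it considers the maps $g_1, g_2\colon G \times H_1 \to H$ given by $g_1(a,b) = \phi(a)$ and $g_2(a,b) = b$, observes that their equalizer $Z \subset G \times H_1$ is isomorphic to $G_1$ (via $a \mapsto (a,\phi(a))$), and that the equalizer of the differentials $\dd_e g_1, \dd_e g_2$ is precisely $(\dd_e\phi)^{-1}(T_e(H_1))$ (embedded via $w \mapsto (w, \dd_e\phi(w))$). Then \cite[6.11]{AH} directly gives $\dim G_1 = \dim Z = \dim\Ker(\dd_e g_1 - \dd_e g_2) = \dim(\dd_e\phi)^{-1}(T_e(H_1))$.

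Your approach instead extracts two consequences of \cite[6.11]{AH} (for kernels and for intersections of subgroups) and threads them together via rank--nullity and the dimension formula for fibres. In fact, your final step---applying \cite[6.11]{AH} to $\mu\colon H_1 \times H' \to H$, $(h_1,h')\mapsto h_1(h')^{-1}$---is precisely the paper's equalizer construction in the special case where $\phi$ is the inclusion $H' \hookrightarrow H$. So your preliminary reductions effectively reduce the general $\phi$ to this inclusion case, only to then apply the very argument that already handles arbitrary $\phi$ directly. The paper's route is shorter; yours has the minor side benefit of isolating the kernel and image identities $T_e(\ker\psi)=\ker(\dd_e\psi)$ and $\im(\dd_e\psi)=T_e(\psi(G'))$ as standalone facts.
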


\begin{proof}
We clearly have the inclusion $\subset$, so it suffices to show that both sides have the same dimension.

Consider the maps
\[
f\colon G_1 \to G \times H_1,
a \mapsto (a, \phi(a))
\]
and
\[
g_1,g_2\colon G \times H_1 \to H,
g_1(a,b) = \phi(a), g_2(a,b) = b.
\]
One easily verifies that $f$ is the equalizer of $g_1$ and $g_2$, \emph{i.e.}, $f$ is injective and its image is exactly the set $Z$ of points where $g_1$ and $g_2$ agree.
In a similar way, one verifies that
the equalizer of $\dd_e g_1, \dd_e g_2\colon T_e(G) \times T_e(H_1) \to T_e(H)$ is the map
\[
(\dd_e(\phi))^{-1}(T_e(H_1))
\to  T_e(G) \times T_e(H_1),
w \mapsto (w, \dd_e(\phi)(w)).
\]

Applying \cite[6.11]{AH} and putting everything together, we obtain
\[
\dim T_e(G_1) = \dim T_e(Z) = \dim Z = \dim (\Ker(\dd_e g_1 - \dd_e g_2))
= \dim (\dd_e(\phi))^{-1}(T_e(H_1)).
\]
\end{proof}

As in the o-minimal setting (see \cite[Theorem 2.24]{pps}), we deduce the following:

\begin{corollary}\label{2.24}
Let $G$ be a definable group, $V$ an $n$-dimensional definable vector space over $K$, and $\phi:G \to \Aut(V)$ a definable homomorphism. Let $U$ be a subspace of $V$, and put $G_1:=\{g\in G: \phi(g)(U)\subseteq U\}$ and $G_2:=\{g\in G: \phi(g)|_U=\id|_U\}$.
Then, viewing  $\dd_e(\phi)$ as a linear function from $T_e(G)$ to $\End(V)$ (the latter identified with $T_e({\rm Aut}(V))$), we have
\begin{enumerate}
    \item[(i)] $T_e(G_1)=\{\xi\in T_e(G):\dd_e(\phi)(\xi)(U)\subseteq U\}$
    \item[(ii)] $T_e(G_2)=\{\xi\in T_e(G):\dd_e(\phi)(\xi)(U)=\{0\}\}$.
\end{enumerate}
\end{corollary}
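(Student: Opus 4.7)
The plan is to derive both (i) and (ii) by applying Proposition~\ref{2.21} to the two obvious ``target'' subgroups of $\Aut(V)$. Define
\[
H_1 := \{A\in\Aut(V) : A(U)\subseteq U\},\qquad H_2 := \{A\in\Aut(V) : A|_U = \id_U\};
\]
both are definable subgroups of $\Aut(V)$ and $G_i = \phi^{-1}(H_i)$ by construction. Since $\Aut(V)$ is an open subset of the finite-dimensional $K$-vector space $\End(V)$, it carries a canonical dwsd Lie structure and we have the natural identification $T_e(\Aut(V)) = \End(V)$.

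The computational heart of the proof is then to verify
\[
T_e(H_1) = \{A\in \End(V) : A(U)\subseteq U\}, \qquad T_e(H_2) = \{A\in \End(V) : A(U) = \{0\}\}
\]
inside $T_e(\Aut(V)) = \End(V)$. Picking a complement $W$ of $U$ and writing endomorphisms as block matrices $\bigl(\begin{smallmatrix}a&b\\c&d\end{smallmatrix}\bigr)$ with respect to $V = U \oplus W$, one sees that $H_1$ is the intersection of $\Aut(V)$ with the \emph{linear} subspace $L_1 := \{c=0\}$ of $\End(V)$, and is therefore open in $L_1$. Hence $L_1$ itself furnishes a chart for $H_1$ and $T_e(H_1) = L_1 = \{A : A(U)\subseteq U\}$, with the identification inside $\End(V)$ provided by the closed embedding of Lemma~\ref{opensubgroup}(i). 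Similarly, $H_2$ is open in the affine subspace $\{c=0,\ a=\id_U\}$, so its tangent space at $e$ is the parallel linear subspace $\{c=0,\ a=0\} = \{A : A(U) = \{0\}\}$.

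Combining these computations with Proposition~\ref{2.21} gives $T_e(G_i) = (\dd_e\phi)^{-1}(T_e(H_i))$, which is exactly the content of (i) and (ii). The only hypothesis of Proposition~\ref{2.21} requiring a remark is strict differentiability of $\phi$, which is automatic for any definable homomorphism between definable groups equipped with their canonical dwsd structures from Proposition~\ref{p.Gman}. Thus no genuine obstacle arises: the argument is a direct transfer of the o-minimal proof of \cite[Theorem~2.24]{pps} to the 1-$h$-minimal setting, with all nontrivial work packaged into Proposition~\ref{2.21}.
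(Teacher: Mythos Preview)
Your proof is correct and follows essentially the same approach as the paper: both apply Proposition~\ref{2.21} with $H=\Aut(V)$ and the same subgroups $H_1$, $H_2$, reducing the problem to identifying $T_e(H_1)$ and $T_e(H_2)$ inside $\End(V)$. The paper is terser on this last step (merely asserting the tangent space of $H_2$ and leaving that of $H_1$ implicit), whereas you spell out the block-matrix computation; otherwise the arguments are the same.
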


\begin{proof}
\begin{enumerate}
\item[(i)] This follows from Proposition~\ref{2.21}, putting $H=\Aut(V)$ and $$H_1:=\{x\in \Aut(V):x(U)\leq U\}.$$
\item[(ii)] Again, apply Proposition~\ref{2.21}, putting 
$$H_2:=\{x\in \Aut(V): x|_U=\id|_U\}=\{x\in \Aut(V): (x-\id)(U)=\{0\}\},$$ and noting that
$$T_e(H_2)=\{x\in \End(V): x(U)=\{0\}\}.$$
\end{enumerate}
\end{proof}

\begin{corollary} \label{2.22}
Let $G$ be a definable group, and $f$ a definable strictly differentiable Lie group automorphism of $G$. Then $\dd_e(f) \in\Aut(T_e(G))$.
\end{corollary}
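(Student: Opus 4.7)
The plan is to derive injectivity of $\dd_e f$ from a direct application of Proposition~\ref{2.21}, and then upgrade injectivity to bijectivity using a dimension count on the finite-dimensional tangent space $T_e(G)$.

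First I would view $f\colon G\to G$ as a homomorphism of dwsd Lie groups and apply Proposition~\ref{2.21} with $\phi := f$ and with $H_1 := \{e\}$, the trivial (definable) subgroup of $G$. Since $f$ is an automorphism, in particular an injective homomorphism, one has $G_1 := f^{-1}(H_1) = \ker f = \{e\}$. Viewing $\{e\}$ as a $0$-dimensional dwsd submanifold of $G$ (which is legitimate since, by Remark~\ref{rem:manichoices}(3), the manifold dimension of a dwsd manifold coincides with its dimension as a definable set), we get $T_e(\{e\}) = \{0\}$. Plugging these identifications into Proposition~\ref{2.21} yields
\[
\{0\} \;=\; T_e(G_1) \;=\; (\dd_e f)^{-1}\bigl(T_e(\{e\})\bigr) \;=\; \ker(\dd_e f),
\]
so $\dd_e f$ is injective.

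Second, I would note that since $f$ is a group homomorphism we have $f(e) = e$, so the differential $\dd_e f$ is a linear endomorphism of one and the same finite-dimensional $K$-vector space $T_e(G)$ (of dimension $\dim G$, cf.\ Remark~\ref{rem:manichoices}(3)). Injectivity of a linear endomorphism of a finite-dimensional vector space forces bijectivity, and therefore $\dd_e f \in \Aut(T_e(G))$.

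I do not expect a genuine obstacle here: the whole argument is a one-line application of Proposition~\ref{2.21}, with the only mild care needed being the identification $T_e(\{e\}) = \{0\}$ and the observation that the source and target of $\dd_e f$ coincide because $f$ fixes the identity. Note that this approach is cleaner than trying to invoke the chain rule via $\dd_e f\circ \dd_e(f^{-1}) = \id$, since it avoids having to separately verify that the group-theoretic inverse $f^{-1}$ is itself strictly differentiable.
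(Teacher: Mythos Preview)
Your proposal is correct and takes essentially the same approach as the paper: apply Proposition~\ref{2.21} with $H_1$ the trivial subgroup to see that $\dd_e f$ has trivial kernel, and then conclude by finite-dimensionality of $T_e(G)$. The paper's proof is just a one-line version of exactly this argument.
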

\begin{proof} See \cite[Corollary 2.22]{pps}. Apply Proposition~\ref{2.21} with $H=G$ and with $H_1$ as the trivial group, to obtain that $\dd_e(f)$ has trivial kernel and so  lies in $\Aut(T_e(G))$.
\end{proof}

Let $G$ be a definable group. We recall the Lie algebra structure on $T_e(G)$ obtained in \cite[6.15--6.19]{AH}; see \cite[Section 2.4]{pps} for the o-minimal analogue. 
For $g\in G$,  we write $\Int_g$ for the map $G \to G$ given by $\Int_g(x)=gxg^{-1}$.  Put $\Ad g:=\dd_e(\Int_g)$. Then  by
Corollary~\ref{2.22}, $\Ad g \in \Aut(T_e(G))$. Also,  $\Ad\colon G \to \Aut(T_e(G))$ is a strictly differentiable definable homomorphism, below called the {\em adjoint} representation of $G$. 

Now let $\adj:= \dd_e(\Ad)$. Then $\adj\colon T_e(G)\to \End(T_e(G))$ is a definable homomorphism. Define the operation
$[-,-]$ on $T_e(G)$ by $[\xi,\zeta]=\adj(\xi)(\zeta)$.
\begin{lemma}\label{2.27}
$(T_e(G), [-,-])$ is a Lie algebra.
\end{lemma}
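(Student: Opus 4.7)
The statement is essentially a consequence of \cite[Proposition~6.19]{AH}, where the Lie algebra structure on the tangent space at the identity of a dwsd Lie group is established. My plan is to verify the three defining axioms separately, using only the definitions and basic differentiation rules already set up in the excerpt; the bulk of the analytic work (second-order strict differentiability in the dwsd framework) is done in \cite{AH}.

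First I would observe that bilinearity of $[-,-]$ is immediate: $\adj\colon T_e(G)\to\End(T_e(G))$ is linear as the differential of a strictly differentiable map, and each $\adj(\xi)$ is itself a linear endomorphism, so $(\xi,\zeta)\mapsto\adj(\xi)(\zeta)$ is bilinear. Next, for the Jacobi identity I would differentiate twice the homomorphism identity $\Ad(ghg^{-1})=\Ad(g)\,\Ad(h)\,\Ad(g)^{-1}$. Differentiating first at $h=e$ in the direction $\xi$ gives, via Corollary~\ref{2.22} and the chain rule, the equality $\adj(\Ad(g)\xi)=\Ad(g)\,\adj(\xi)\,\Ad(g)^{-1}$ as elements of $\End(T_e(G))$. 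Then differentiating this at $g=e$ in the direction $\zeta$ and using $d_e\Ad=\adj$ produces $\adj([\zeta,\xi])=\adj(\zeta)\adj(\xi)-\adj(\xi)\adj(\zeta)$; evaluating on $\eta\in T_e(G)$ rearranges to the Jacobi identity $[\zeta,[\xi,\eta]]=[[\zeta,\xi],\eta]+[\xi,[\zeta,\eta]]$.

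The most delicate point, and the expected obstacle, is antisymmetry $[\xi,\xi]=0$. The natural route is to analyze the group commutator map $c\colon G\times G\to G$, $c(g,h)=ghg^{-1}h^{-1}$, whose first-order differential at $(e,e)$ vanishes and whose second-order behaviour captures the bracket; the symmetry $c(h,g)=c(g,h)^{-1}$ combined with $d_e(\mathrm{inv})=-\mathrm{id}$ then forces $[\xi,\zeta]=-[\zeta,\xi]$. Carrying this out rigorously requires a second-order Taylor expansion, which in the dwsd setting is exactly what the $T_2$-condition of \cite{AH} is designed to support; once $G$ is equipped via Proposition~\ref{p.Gman} with its canonical structure (which is in fact a $T_2$-manifold, cf.\ the remark preceding Definition~\ref{d.diff}), the required computation is performed in \cite[Proposition~6.19]{AH}, and the proof of the lemma reduces to quoting that result. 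So in practice my write-up would verify bilinearity and Jacobi by the short differentiation arguments above and cite \cite[Proposition~6.19]{AH} for antisymmetry.
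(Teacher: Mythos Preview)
Your proposal is correct and matches the paper's proof, which is simply the citation of \cite[Proposition~6.19]{AH}. One small remark on your added exposition: your Jacobi argument presupposes that $\Ad$ is strictly differentiable, but since $\Ad(g)=d_e\Int_g$ already involves one derivative, establishing this is itself part of the $T_2$-machinery in \cite{AH}---so the second-order subtleties you flag for antisymmetry are in fact already present in the very definition of $\adj$, not only in the skew-symmetry step.
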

\begin{proof}
See \cite[Proposition 6.19]{AH}.
\end{proof}

We denote by $L(G)$ the Lie algebra of $G$ as given by the last lemma.

\begin{remark}
A standard computation shows that for $g \in G$, $\Ad g \in \Aut(T_e(G))$ is even a Lie algebra automorphism: From $\Ad(ghg^{-1}) = (\Ad g) \circ (\Ad h) \circ (\Ad g)^{-1}$, one obtains $\Ad \circ \Int_g = \Int_{\Ad g} \circ \Ad$
(where $\Int_{\Ad g}\colon \Aut(T_e(G)) \to \Aut(T_e(G)), \phi \mapsto (\Ad g) \circ \phi \circ (\Ad g)^{-1}$).
We take the derivative at $e$ on both sides, and use that since
$\Int_{\Ad g}$ is a linear automorphism of the vector space $V=\End(T_e(G))$, it is its own derivative -- indeed, for any $v\in V$ the tangent space to $V$ at $v$ is $V$ itself, and the derivative of a linear map $f$ at $v$ is the linear approximation of $f$, which is $f$ itself.  One gets $\adj \circ (\Ad g) = \Int_{\Ad g} \circ \adj$, which, evaluated at $\xi$ and then at $(\Ad g)(\zeta)$, gives $[(\Ad g)(\xi), (\Ad g)(\zeta)] = (\Ad g)[\xi, (\Ad g)^{-1}((\Ad g)(\zeta))]= (\Ad g)[\xi,\zeta]$.
\end{remark}

Recall that if $\mathfrak{g}$ is a Lie algebra with Lie bracket $[-]$, then an \emph{ideal} of $\mathfrak{g}$ is a subspace $\mathfrak{h}$ such that 
$[\mathfrak{g},\mathfrak{h}]\subset \mathfrak{h}$. We say $\mathfrak{g}$ is \emph{abelian} if $[\mathfrak{g},\mathfrak{g}]=\{0\}$. It is \emph{simple} if its only ideals are $\{0\}$ and $\mathfrak{g}$, and is \emph{semisimple} if its only abelian ideal is $\{0\}$. Furthermore, any finite-dimensional semisimple Lie algebra is a direct sum of finitely many ideals which are simple Lie algebras.

\begin{lemma}\label{2.31}
Let $\mathfrak{g}$ be the Lie algebra of the definable group $G$ and let $\mathfrak{h}$ be a subspace of $\mathfrak{g}$. Then
\begin{enumerate}
\item[(i)] The subalgebra $\{\xi\in \mathfrak{g}: [\xi,\mathfrak{h}]=0\}$ is the Lie algebra of the subgroup 
$\{g\in G: \Ad g|_{\mathfrak{h}}=\Id\}$, and

\item[(ii)] the subalgebra $\{\xi\in \mathfrak{g}: [\xi,\mathfrak{h}]\subseteq \mathfrak{h}\}$ is the Lie algebra of
the subgroup $\{g\in G: \Ad g(\mathfrak{h})\subseteq \mathfrak{h}\}$.
\end{enumerate}
\end{lemma}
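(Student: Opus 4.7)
The plan is to reduce both parts directly to Corollary~\ref{2.24}, applied to the adjoint representation $\Ad\colon G \to \Aut(T_e(G))$ (a definable strictly differentiable homomorphism as noted in the discussion preceding Lemma~\ref{2.27}). Take $V := T_e(G) = \mathfrak{g}$, viewed as an $n$-dimensional definable vector space over $K$ (for $n = \dim G$), and take $U := \mathfrak{h}$. The crucial identification is $d_e(\Ad) = \adj$, which is the definition of $\adj$, combined with the formula $\adj(\xi)(\zeta) = [\xi,\zeta]$ that defines the Lie bracket on $\mathfrak{g}$.

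For part (ii), apply Corollary~\ref{2.24}(i) with the above $\phi,V,U$. The subgroup $G_1 = \{g\in G : \Ad g(\mathfrak{h})\subseteq \mathfrak{h}\}$ is definable (since $\Ad$ is a definable homomorphism), and its Lie algebra is $T_e(G_1)$. Corollary~\ref{2.24}(i) gives
\[
T_e(G_1) = \{\xi \in T_e(G) : d_e(\Ad)(\xi)(\mathfrak{h}) \subseteq \mathfrak{h}\}
         = \{\xi \in \mathfrak{g} : [\xi,\mathfrak{h}] \subseteq \mathfrak{h}\},
\]
which is the normalizer of $\mathfrak{h}$ in $\mathfrak{g}$, and hence a Lie subalgebra.

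For part (i), apply Corollary~\ref{2.24}(ii) to the same data. The subgroup $G_2 = \{g \in G : \Ad g|_{\mathfrak{h}} = \Id\}$ is definable, and Corollary~\ref{2.24}(ii) yields
\[
T_e(G_2) = \{\xi \in T_e(G) : d_e(\Ad)(\xi)(\mathfrak{h}) = \{0\}\}
         = \{\xi \in \mathfrak{g} : [\xi,\mathfrak{h}] = 0\},
\]
i.e.\ the centralizer of $\mathfrak{h}$ in $\mathfrak{g}$.

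There is no real obstacle: the whole lemma is essentially Corollary~\ref{2.24} translated through the identity $d_e(\Ad) = \adj$ and $\adj(\xi)(\cdot) = [\xi,\cdot]$. The only small points to mention are that the two subgroups appearing on the right-hand sides are genuinely definable (immediate from the definability and strict differentiability of $\Ad$), and that ``Lie algebra of the subgroup'' means its tangent space at $e$ equipped with the induced bracket, which coincides with the bracket inherited from $\mathfrak{g}$ by functoriality of the construction $G \mapsto L(G)$ applied to the inclusions $G_1,G_2 \hookrightarrow G$ (cf.\ Lemma~\ref{opensubgroup}(i) and Lemma~\ref{2.27}).
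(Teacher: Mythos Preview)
Your proposal is correct and takes essentially the same approach as the paper, which simply says ``This follows by applying Corollary~\ref{2.24} to the adjoint representation of $G$.'' You have supplied the details the paper omits, correctly matching Corollary~\ref{2.24}(i) to part (ii) and Corollary~\ref{2.24}(ii) to part (i), and spelling out the key identification $d_e(\Ad) = \adj$.
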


\begin{proof} See \cite[Claim 2.31]{pps}. This follows by applying Corollary~\ref{2.24} to the adjoint representation of $G$.
\end{proof}

\begin{lemma} \label{2.26}
Let $G$ be a definable group, and $\Ad\colon G \to \Aut(T_e(G))$ be as above.
\begin{enumerate}
\item[(i)] 
For $x\in G$, we have
$x\in \Ker(\Ad)\Leftrightarrow |G:C_G(x)|<\infty$.
\item[(ii)] Assume $G$ is a definable subgroup of some $\GL_n(K)$. Then $\Ker(\Ad)$  centralises a definable normal subgroup of $G$ of finite index.
\end{enumerate}
\end{lemma}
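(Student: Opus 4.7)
The plan is to reduce both parts to the identification $T_e(C_G(x)) = \ker(\Ad(x) - \Id)$ inside $\g := T_e(G)$.

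For (i), I would first observe that $C_G(x)$ is exactly the equalizer of the two strictly differentiable definable maps $\Int_x\colon G \to G$ and $\Id_G\colon G \to G$. Applying the equalizer tool \cite[6.11]{AH} used in the proof of Proposition~\ref{2.21} gives
$$T_e(C_G(x)) \;=\; \ker(\dd_e \Int_x - \dd_e \Id_G) \;=\; \ker(\Ad(x) - \Id),$$
so $\dim C_G(x) = \dim \g$ if and only if $\Ad(x) = \Id$, i.e.\ $x \in \Ker(\Ad)$. Independently, the orbit map $\alpha\colon G \to G$, $y \mapsto yxy^{-1}$, has image the conjugacy class $x^G$ and all fibers equal to cosets of $C_G(x)$, so Proposition~\ref{oldhyp}(ii) yields $\dim G = \dim x^G + \dim C_G(x)$. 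Thus $\dim C_G(x) = \dim G$ is equivalent to $\dim x^G = 0$, which by Proposition~\ref{oldhyp}(i) is equivalent to $x^G$ being finite, and hence to $|G : C_G(x)| < \infty$. Chaining these two equivalences yields (i).

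For (ii), set $N_0 := \Ker(\Ad)$, a definable normal subgroup of $G$ as the kernel of the definable homomorphism $\Ad$. The main difficulty here is that $N_0$ may be infinite, so we cannot simply intersect all the centralizers $C_G(x)$ for $x \in N_0$ and hope to stay of finite index. The way around this is a linear-algebra finiteness observation: the commutation relation in $M_n(K)$ is $K$-linear in the second argument, so for any subset $S \subseteq \GL_n(K) \subseteq M_n(K)$, we have $g \in C_{\GL_n(K)}(S)$ if and only if $g$ commutes with every element of the $K$-linear span of $S$ in $M_n(K)$. Since $M_n(K)$ has dimension $n^2$, by a standard exchange argument I can pick a basis $\{x_1,\dots,x_m\}$ of this span from within $N_0$ itself, with $m \le n^2$, giving $C_G(N_0) = \bigcap_{i=1}^m C_G(x_i)$. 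By part (i), each $C_G(x_i)$ has finite index in $G$, so the intersection $N := C_G(N_0)$ is a definable subgroup of finite index. Normality of $N$ in $G$ is inherited from normality of $N_0$: for $g \in G$, $h \in N$, and $x \in N_0$, the conjugate $g^{-1}xg$ lies in $N_0$ and so commutes with $h$, whence $ghg^{-1}$ commutes with $x$. By construction, $N$ is centralised by $N_0$, completing the proof of (ii).
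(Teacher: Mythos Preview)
Your proof is correct. A brief comparison with the paper's approach:

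For part (i), the paper simply cites \cite[Lemma 8.7]{AH} as a black box, whereas you spell out a direct argument via the equalizer description of $C_G(x)$ and the orbit--stabiliser dimension count. Your argument is essentially what underlies that cited lemma, so the content is the same, just made explicit.

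For part (ii), the paper takes a slightly different route: it invokes the descending chain condition on centralisers in subgroups of $\GL_n(K)$, justified via stability of $\GL_n(K^{\alg})$, to conclude that $N := \bigcap_{g \in \Ker(\Ad)} C_G(g)$ is already a finite intersection and hence definable of finite index. Your argument replaces this model-theoretic DCC with the elementary linear-algebra observation that $gx = xg$ is $K$-linear in $x$, so centralising $N_0$ is equivalent to centralising a basis (of size at most $n^2$) of its $K$-span in $M_n(K)$. The two arguments are really the same phenomenon seen from different angles---the DCC on centralisers in $\GL_n$ holds precisely because of this finite-dimensionality---but your version is more self-contained and avoids the appeal to stability theory.
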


\begin{proof} (i) is exactly (the first statement of) \cite[Lemma 8.7]{AH}.

(ii) It is well-known that any subgroup of a group $\GL_n(K)$  has the descending chain condition on centralisers; for example, this holds for the group $\GL_n(K^{\alg})$ because the latter is  stable  (see e.g. \cite [Corollary 1.0.7]{wagner}), and is inherited by subgroups of $\GL_n(K^{\alg})$ since the formula $xy=yx$ is quantifier-free.  

Let $g\in \Ker(\Ad)$. Then  $C_G(g)$ is open in $G$ of finite index by (i).  Put $N:=\bigcap_{g\in \Ker(\Ad)} C_G(g)$, a definable normal subgroup of $G$ with $\Ker(\Ad)\leq C_G(N)$. As $G$  has the descending chain condition on centralisers, we have $N=C_G(F)$ for some finite $F\subset \Ker(\Ad)$, so $|G:N|$ is finite as required.
\end{proof}

\begin{lemma}\label{2.32}
Let $G$ be a definable subgroup of $\GL_n(K)$ with Lie algebra $\mathfrak{g}$.
\begin{enumerate}
\item[(i)] $G$ has an abelian definable subgroup of finite index if and only if $\mathfrak{g}$ is abelian.  
\item[(ii)] Let $H$ be a definable subgroup of $G$ with Lie algebra $\mathfrak{h}$.  
Suppose that $H$ is commensurable with a definable normal subgroup $H_0$ of  a definable open subgroup $G_0$ of $G$. Then $\mathfrak{h}$ is an ideal of $\mathfrak{g}$.
\end{enumerate}
\end{lemma}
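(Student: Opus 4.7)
For part (ii), the plan is to reduce to the case $H \trianglelefteq G$ and then apply Lemma~\ref{2.31}(ii). By Lemma~\ref{opensubgroup}(ii) the openness of $G_0$ in $G$ gives $T_e(G_0) = \mathfrak{g}$, and by Lemma~\ref{2.20} the commensurability of $H$ with $H_0$ gives $T_e(H) = T_e(H_0) = \mathfrak{h}$; so replacing $G$ by $G_0$ and $H$ by $H_0$ one reduces to $H \trianglelefteq G$. For any $g \in G$ the inner automorphism $\Int_g$ restricts to a self-map of $H$, so $\Ad g = \dd_e(\Int_g)$ sends $\mathfrak{h}$ into itself. Hence $G = \{g \in G : \Ad g(\mathfrak{h}) \subseteq \mathfrak{h}\}$, and computing Lie algebras via Lemma~\ref{2.31}(ii) yields $\mathfrak{g} = \{\xi \in \mathfrak{g} : [\xi, \mathfrak{h}] \subseteq \mathfrak{h}\}$, which says precisely that $\mathfrak{h}$ is an ideal of $\mathfrak{g}$.

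For the forward direction of part (i), if $H \leq G$ is an abelian definable subgroup of finite index then $H$ is open in $G$ by Lemma~\ref{opensubgroup}(ii), so $T_e(H) = \mathfrak{g}$; commutativity of $H$ forces $\Ad h = \Id$ on $\mathfrak{g}$ for every $h \in H$, and differentiating the map $\Ad|_H\colon H \to \Aut(\mathfrak{g})$ at $e$ yields $\adj = 0$, \emph{i.e.} $\mathfrak{g}$ is abelian.

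For the converse, suppose $\mathfrak{g}$ is abelian, so $\adj = 0$. Proposition~\ref{2.21} applied to $\Ad\colon G \to \Aut(\mathfrak{g})$ with the trivial subgroup $\{\Id\}$ gives $T_e(\Ker \Ad) = \Ker \adj = \mathfrak{g}$, so $\Ker \Ad$ is open in $G$. Lemma~\ref{2.26}(ii) supplies a definable normal finite-index subgroup $N \trianglelefteq G$ with $\Ker \Ad \subseteq C_G(N)$. Setting $H_1 := N \cap \Ker \Ad$, one checks that $H_1 = Z(N)$: the inclusion $H_1 \subseteq Z(N)$ follows from $\Ker \Ad \subseteq C_G(N)$, and the reverse inclusion follows from Lemma~\ref{2.26}(i) applied to any $z \in Z(N)$ (whose centraliser contains $N$, hence has finite index). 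Thus $H_1$ is an abelian definable subgroup.

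The main obstacle is to show that $H_1$ has finite index in $G$: openness of $\Ker \Ad$ does not by itself imply finite index in the $1$-$h$-minimal context, in contrast with the o-minimal setting of \cite{pps}. Here the plan is to exploit the hypothesis $G \leq \GL_n(K)$: let $\mathbf{H}$ denote the Zariski closure of $G$ in $\GL_n$, a linear algebraic $K$-group. Results from Section~8 of \cite{AH} identify $\mathfrak{g}$ with the Lie algebra of $\mathbf{H}$ (as subspaces of $M_n(K)$), so this Lie algebra is abelian, which in characteristic $0$ forces the algebraic connected component $\mathbf{H}^{\circ}$ to be commutative. Since $\mathbf{H}^{\circ}$ has finite index in $\mathbf{H}$, the intersection $G \cap \mathbf{H}^{\circ}(K)$ is an abelian definable subgroup of $G$ of finite index, completing the proof.
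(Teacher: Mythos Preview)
Your proofs of part~(ii) and the forward direction of part~(i) are correct and follow essentially the same route as the paper. For the converse of~(i), however, there is a genuine gap.

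You correctly identify the obstacle: in the 1-$h$-minimal setting, openness of $\Ker(\Ad)$ does not by itself give finite index. Your proposed fix via the Zariski closure $\mathbf{H} = \cl_{K^{\alg}}(G)$ rests on the claim that $\mathfrak{g}$ equals the Lie algebra of $\mathbf{H}$ as subspaces of $M_n(K)$, and this is where the argument breaks. What one can plausibly extract from Section~8 of \cite{AH} is that the Lie algebra $\mathfrak{g}$ of $G$, computed via the dwsd manifold structure, is realised as a Lie subalgebra of $M_n(K)$ with the commutator bracket. But there is no reason for this subalgebra to coincide with the Lie algebra of $\mathbf{H}$: in a 1-$h$-minimal expansion carrying genuine extra (e.g.\ analytic) structure, $G$ may have strictly smaller dimension than its Zariski closure --- Lemma~\ref{lou} requires $L_{\divv}$-definability precisely for this reason --- so $\mathfrak{g}$ may be a proper subalgebra. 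Without the equality you cannot conclude that the Lie algebra of $\mathbf{H}$ is abelian, and the deduction that $\mathbf{H}^\circ$ is commutative collapses. (It is true \emph{a posteriori} that $\mathbf{H}^\circ$ is abelian, but only because the lemma holds.)

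The paper sidesteps this entirely: it invokes the full statement of \cite[Lemma~8.7]{AH}, of which Lemma~\ref{2.26}(i) records only the first part. The remaining content of that lemma gives directly that $\mathfrak{g}$ abelian implies $|G:\Ker(\Ad)|$ finite. Once $\Ker(\Ad)$ has finite index, your own argument with $N$ from Lemma~\ref{2.26}(ii) finishes: $N \cap \Ker(\Ad)$ is abelian (since $\Ker(\Ad) \subseteq C_G(N)$) and of finite index in $G$.
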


\begin{proof}
Suppose $\mathfrak{g}$ is abelian. By \cite[Lemma 8.7]{AH}, this implies that $|G:\Ker(\Ad)|$ is finite. 
By Lemma~\ref{2.26} (ii), $\Ker(\Ad)$ centralises a definable normal subgroup $H \subset G$ of finite index. The intersection $H \cap \Ker(\Ad)$ also has finite index in $G$, and it is abelian.

Conversely, suppose that $G$ has a definable abelian subgroup $U$ of finite index. Then for each $g\in U$ the group $C_G(g)$ is of finite index, so by Lemma~\ref{2.26}(i), for all $g\in U$ we have $\Ad g=1$, so $\Ker(\Ad)\geq U$. It follows by Lemmas~\ref{2.31}(i) and \ref{opensubgroup} that $L(\Ker(\Ad))=\{\xi\in \mathfrak{g}:[\xi,\mathfrak{g}]=0\}=L(G)$. Hence $\mathfrak{g}$ is abelian.

(ii) Let $G_0$, $H$, and $H_0$ be as in the statement, and let $\mathfrak{h}=L(H)$ and $\mathfrak{h}_0=L(H_0)$. First observe that $\mathfrak{h}=\mathfrak{h}_0$ by Lemma~\ref{2.20}.
Then for every $g\in G_0$, $\Int_g(H_0)=H_0$, so $\Ad g(\mathfrak{h})=\mathfrak{h}$ (e.g. by Corollary~\ref{2.22}). 
Let $G_1:=\{g\in G: \Ad g(\mathfrak{h})\subseteq\mathfrak{h}\}$. Then $G_0\leq G_1\leq G$,
and $L(G_1)=\{\xi\in \mathfrak{g}: [\xi,\mathfrak{h}]\subseteq\mathfrak{h}\}$ by Lemma~\ref{2.31}(ii). Also, by Lemma~\ref{opensubgroup}, $L(G_0)=L(G_1)=L(G)$ as $G_0$ is open in $G$, so $\mathfrak{h}$ is an ideal of $\mathfrak{g}$.
\end{proof}

\begin{theorem} \label{2.34}
Let $G$ be a definable subgroup of $\GL_n(K)$ with Lie algebra $\mathfrak{g}$. Then the following are equivalent.

(i) $G$ has an infinite definable abelian normal subgroup.

(ii) $G$ has a definable open subgroup $H$ with an infinite definable abelian normal subgroup.

(iii) $\mathfrak{g}$ is not semisimple.

\end{theorem}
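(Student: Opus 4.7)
The plan is to prove the cycle (i) $\Rightarrow$ (ii) $\Rightarrow$ (iii) $\Rightarrow$ (i). The first implication is immediate with $H = G$. For (ii) $\Rightarrow$ (iii), suppose $H \le G$ is open with an infinite definable abelian normal subgroup $A$. Because $H$ is open in $G$, Lemma~\ref{opensubgroup}(ii) yields $L(H) = \mathfrak{g}$. Applying Lemma~\ref{2.32}(i) to the abelian group $A$ gives that $L(A)$ is abelian, and applying Lemma~\ref{2.32}(ii) with $G_0 = H$ and $H_0 = A$ (noting that $A$ is commensurable with itself) gives that $L(A)$ is an ideal of $\mathfrak{g}$. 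Since $A$ is infinite, Proposition~\ref{oldhyp}(i) forces $\dim A \ge 1$, so $L(A) \ne 0$ is a nonzero abelian ideal of $\mathfrak{g}$ and $\mathfrak{g}$ is not semisimple.

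The substantive direction is (iii) $\Rightarrow$ (i). Suppose $\mathfrak{g}$ is not semisimple. I would first pick a nonzero abelian ideal of $\mathfrak{g}$ which is moreover \emph{characteristic} (invariant under all Lie algebra automorphisms), taking for instance $\mathfrak{a} := \mathfrak{r}^{(k)}$, the last nonzero term of the derived series of the solvable radical $\mathfrak{r}$ of $\mathfrak{g}$; this is abelian because $\mathfrak{r}^{(k+1)} = 0$, characteristic because $\mathfrak{r}$ and its derived subalgebras are invariant under every Lie algebra automorphism, and definable because it is constructed by finitely many bracket operations from the definable ideal $\mathfrak{r}$. Since each $\Ad(g)$ is a Lie algebra automorphism of $\mathfrak{g}$ (see the remark after Lemma~\ref{2.27}), $\Ad(g)(\mathfrak{a}) = \mathfrak{a}$ for every $g \in G$. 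I then introduce the nested definable subgroups
\[
H := \{g \in G : \Ad(g)|_{\mathfrak{a}} = \Id_{\mathfrak{a}}\}, \qquad M := \Ker\bigl(\Ad \colon H \to \Aut(L(H))\bigr).
\]
By Corollary~\ref{2.24}(ii), $L(H) = \{\xi \in \mathfrak{g} : [\xi,\mathfrak{a}] = 0\} \supseteq \mathfrak{a}$, so $H$ is infinite, and the identity $\Ad(g)(\mathfrak{a}) = \mathfrak{a}$ for $g \in G$ gives by direct computation that $H$ is normal in $G$. By Proposition~\ref{2.21}, $L(M) = \Ker(\adj|_{L(H)})$ is the centre of $L(H)$; this centre contains $\mathfrak{a}$ (for $\xi \in \mathfrak{a}$ and $\eta \in L(H)$, $[\xi,\eta] = -[\eta,\xi] = 0$), so $M$ is infinite. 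An analogous conjugation argument, using that each $\Ad(g)$ preserves $L(H)$ because it preserves $\mathfrak{a}$, shows that $M$ is also normal in $G$.

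To finish, note that every $g \in M$ acts trivially on $L(H)$, and hence on $L(M) \subseteq L(H)$; so, viewing $\Ad \colon M \to \Aut(L(M))$ as the adjoint representation of $M$, one has $\Ker(\Ad|_M) = M$. Applying Lemma~\ref{2.26}(ii) to $M \le \GL_n(K)$ yields that $\Ker(\Ad|_M) = M$ centralises a definable normal subgroup $M_0$ of $M$ of finite index, so $M_0 \subseteq Z(M)$ and $[M : Z(M)] < \infty$. Hence $Z(M)$ is an infinite abelian definable subgroup of $G$; since $Z(M)$ is characteristic in $M$ and $M$ is normal in $G$, it is also normal in $G$. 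This is the subgroup required by (i). The main obstacle I anticipate is precisely obtaining normality in all of $G$ (rather than in some smaller open subgroup): the crucial step is choosing $\mathfrak{a}$ characteristic, since without that the analogous construction would yield only a subgroup normal in the open subgroup $N_G(\mathfrak{a})$, giving (ii) but not (i) directly.
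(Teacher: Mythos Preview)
Your proof is correct and follows essentially the same approach as the paper: both choose the characteristic abelian ideal $\mathfrak{a}$ (the paper's $\mathfrak{j}$) as the last nonzero term of the derived series of the radical, form the normal subgroup $H = \{g : \Ad(g)|_{\mathfrak{a}} = \Id\}$, then the normal subgroup $M = \{h \in H : \Ad(h)|_{L(H)} = \Id\}$ (the paper's $B$), and finish by showing $[M : Z(M)] < \infty$. The only cosmetic differences are that you invoke Lemma~\ref{2.26}(ii) directly where the paper unpacks it via Lemma~\ref{2.26}(i) and the descending chain condition, and you cite Proposition~\ref{2.21}/Corollary~\ref{2.24} where the paper cites the derived Lemma~\ref{2.31}.
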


\begin{proof} We adapt \cite[Theorem 2.34]{pps}. The direction (i) $\Rightarrow$ (ii) is trivial. 

(ii)  $\Rightarrow$ (iii).  Suppose that $G$ has an open subgroup $H$ which has an infinite definable abelian normal subgroup $B$. We apply Lemma~\ref{2.32}: part (ii) yields that  $L(B)$ is a non-trivial ideal of $\mathfrak{g}$, and it is abelian as $B$ is abelian.

(iii) $\Rightarrow$ (i) Suppose that $\mathfrak{g}$ is not semisimple. Then it has an abelian non-trivial ideal, so has non-trivial soluble radical, and the last non-trivial term in its derived series is an abelian ideal  $\mathfrak{j}$ which is invariant under $\Aut(\mathfrak{g})$. 

 Put $H:=\{g\in G: \Ad g|_{\mathfrak{j}}=\Id\}$. By Lemma~\ref{2.31}(i), 
$\mathfrak{h}:=L(H)=\{\xi\in \mathfrak{g}: [\xi,\mathfrak{j}]=0\}$. Then $\mathfrak{h}$ is an ideal of $\mathfrak{g}$ (use the Jacobi identity), and contains $\mathfrak{j}$ as the latter is abelian. Also, by the above invariance, for every $g\in G$ we have $\Ad g(\mathfrak{j})=\mathfrak{j}$, 
so $g^{-1}Hg=H$ from the definition of $H$, so $H$ is normal in $G$. 
Now let $B:=\{h\in H\colon  \Ad h|_{\mathfrak{h}}=\Id\}$. Then $B$ is normal in $G$ (as $\mathfrak{h}$ is  $\Aut(\mathfrak{g})$-invariant), and $L(B)=\{\xi\in \mathfrak{h}: [\xi,\mathfrak{h}]=0\}$ by Lemma~\ref{2.31}(i). Now by the Jacobi identity, $L(B)$ is an ideal of $\mathfrak{g}$, and contains $\mathfrak{j}$, so $B$ is infinite. Furthermore, the adjoint action of $B$ on $L(B)$ is trivial, so
(by Lemma~\ref{2.26} (i)) for every $g\in B$ we have $|B:C_B(g)|$ finite. It follows by  the descending chain condition on centralisers for $G$ that $|B:Z(B)|$ is finite. Thus, $Z(B)$ is an infinite definable abelian normal subgroup of $G$.
\end{proof}

\begin{theorem}\label{simpleliealg}
Suppose that $G$ is a definably almost simple non-abelian definable subgroup of $\GL_n(K)$. Then its Lie algebra $\mathfrak{g}$ is simple.
\end{theorem}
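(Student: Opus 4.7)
The plan is to invoke Theorem~\ref{2.34} to reduce to semisimplicity, then use the decomposition into simple ideals together with the $\Ad$-action on them to produce a proper infinite definable normal subgroup of $G$ out of any non-trivial decomposition, contradicting the hypothesis.

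First I would observe that $G$ must be infinite (otherwise $\mathfrak{g}=0$, which is not simple). Since $G$ is non-abelian and definably almost simple, any proper definable normal subgroup of $G$ is finite, so in particular $G$ has no infinite definable abelian normal subgroup. By Theorem~\ref{2.34}, $\mathfrak{g}$ is therefore semisimple.

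Now decompose $\mathfrak{g}=\mathfrak{g}_1\oplus\cdots\oplus\mathfrak{g}_k$ into simple (necessarily non-abelian) ideals, and assume for contradiction that $k\geq 2$. The $\mathfrak{g}_i$ are precisely the minimal non-zero ideals of $\mathfrak{g}$, so every $\Ad g\in\Aut(\mathfrak{g})$ permutes them; hence for each $i$, the set $\{g\in G:\Ad g(\mathfrak{g}_i)=\mathfrak{g}_i\}$ is a definable subgroup of $G$ of index at most $k$. Its normal core is a definable normal subgroup of finite index, which by definable almost simplicity (and infiniteness of $G$) equals $G$. Thus every $\mathfrak{g}_i$ is $\Ad(G)$-invariant. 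Fix $i$ and set $H_i:=\{g\in G:\Ad g|_{\mathfrak{g}_i}=\Id\}$, the kernel of the $G$-action on $\mathfrak{g}_i$; this is a definable normal subgroup of $G$. By Lemma~\ref{2.31}(i),
\[ L(H_i)=\{\xi\in\mathfrak{g}:[\xi,\mathfrak{g}_i]=0\}. \]
Using that $[\mathfrak{g}_j,\mathfrak{g}_i]\subseteq \mathfrak{g}_i\cap\mathfrak{g}_j=0$ for $j\neq i$ and that $Z(\mathfrak{g}_i)=0$ (because $\mathfrak{g}_i$ is simple and non-abelian), one computes $L(H_i)=\bigoplus_{j\neq i}\mathfrak{g}_j$. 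Since $k\geq 2$, this is non-zero, so $H_i$ is infinite; and $H_i\neq G$, for $H_i=G$ would force $[\mathfrak{g}_i,\mathfrak{g}_i]=0$, contradicting simplicity of $\mathfrak{g}_i$. Thus $H_i$ is a proper, infinite, definable normal subgroup of $G$, the desired contradiction.

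The only mildly delicate point is the dimension computation showing that $L(H_i)\neq 0$ implies $H_i$ is infinite; this relies on Lemma~\ref{2.31}(i), itself a consequence of Proposition~\ref{2.21} via Corollary~\ref{2.24}, tying the Lie-algebraic obstruction directly to the definable dwsd Lie-group structure furnished by Proposition~\ref{p.Gman}. Once that machinery is in place, no new ideas beyond the adjoint action on the simple factors are required.
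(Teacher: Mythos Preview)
Your proof is correct and essentially the same as the paper's: both invoke Theorem~\ref{2.34} for semisimplicity, argue that each simple ideal is $\Ad(G)$-invariant via the finite-index/normal-core trick, and then apply Lemma~\ref{2.31}(i) to extract the contradiction. The paper's endgame is marginally different --- it shows the analogous kernels (on the complementary ideal $\mathfrak{j}=\bigoplus_{j\neq 1}\mathfrak{h}_j$) all equal $G$, deduces that $\Ad$ is trivial, and finishes via Lemma~\ref{2.26}(ii) --- whereas you conclude more directly that $H_i$ is proper and infinite, which is slightly cleaner.
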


\begin{proof} By Theorem~\ref{2.34}, $\mathfrak{g}$ is semisimple, so is a direct sum of a unique  set $\{\mathfrak{h}_1,\ldots,\mathfrak{h}_t\}$ of ideals which are non-abelian and simple as Lie algebras. We suppose for a contradiction that $t>1$.

Let $\mathfrak{j}:=\mathfrak{h}_2 \oplus \ldots \oplus \mathfrak{h}_t$. Then $\mathfrak{h}_1=\{\xi\in \mathfrak{g}:[\xi,\mathfrak{j}]=0\}$, and by Lemma~\ref{2.31}(i), $\mathfrak{h}_1$ is the Lie algebra of $N_1:=\{g\in G: \Ad g|_{\mathfrak{j}}=\Id\}$. Note that $G$ has no proper definable  subgroup $N$ of finite index, since otherwise $\bigcap_{g\in G}g^{-1}Ng$ is a proper definable {\em normal} subgroup of $G$ of finite index, contradicting definable almost simplicity. Since  the collection $\{\mathfrak{h}_1,\ldots,\mathfrak{h}_t\}$ is uniquely determined, it follows that each $\mathfrak{h}_i$ is $G$-invariant under the adjoint action. Thus, $N_1$ is a normal subgroup of $G$, and is clearly definable. Also $N_1$ is infinite
 as $\mathfrak{h}_1$ is non-trivial, so $N_1=G$ by definable almost simplicity. Since we may replace $\mathfrak{h}_1$ by any $\mathfrak{h}_j$, it follows that the adjoint representation of $G$ is trivial, so $|G:Z(G)|$ is finite by Lemma~\ref{2.26}(ii). 
This contradicts the assumption that $G$ is non-abelian and definably almost simple.
\end{proof}

\begin{lemma}\label{al-simp}
Let $G$ be an infinite non-abelian definable and definably almost simple subgroup of $\GL_n(K)$. Then $G\leq \SL_n(K)$.
\end{lemma}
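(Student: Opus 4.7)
The plan is to consider $S := G \cap \SL_n(K)$, which is a definable normal subgroup of $G$ because $\SL_n(K)$ is a normal subgroup of $\GL_n(K)$. By definable almost simplicity, $S$ is either finite or equal to $G$, and I want to exclude the first case. I would argue by contradiction: assume $S$ is finite and derive that $\dim G \leq 1$, then obtain a conflicting lower bound $\dim G \geq 3$ from the Lie algebra.

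For the upper bound: if $S$ is finite, then $\det|_G\colon G \to K^\times$ is a definable homomorphism with finite kernel, so every fibre is zero-dimensional, and $\det(G)$ is an infinite definable subgroup of $K^\times$ (infinite because $G$ is infinite and $\ker=S$ is finite). By Proposition~\ref{oldhyp}(ii) applied to $\det|_G$, I get $\dim G = \dim \det(G) \leq 1$.

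For the lower bound: since $G$ is non-abelian and definably almost simple, Theorem~\ref{simpleliealg} gives that $L(G)$ is simple. Moreover $L(G)$ must be non-abelian, for otherwise Lemma~\ref{2.32}(i) would supply a definable abelian subgroup $A \leq G$ of finite index, and the intersection $N := \bigcap_{g \in G} gAg^{-1}$ would then be a definable normal abelian subgroup of finite index; being infinite, $N$ would equal $G$ by definable almost simplicity, contradicting that $G$ is non-abelian. Now any non-abelian simple Lie algebra over a field of characteristic zero has dimension at least $3$ (one- and two-dimensional Lie algebras are all solvable), so $\dim G = \dim L(G) \geq 3$, using that the dimension of a dwsd manifold equals the dimension of its tangent space at any point (Remark~\ref{rem:manichoices}(3)). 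This contradicts $\dim G \leq 1$, and so $S = G$, i.e.\ $G \leq \SL_n(K)$.

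I do not foresee a real obstacle: this is a short dimension count assembling ingredients already proved in the paper. The only auxiliary fact beyond the stated machinery is the standard classification statement that a non-abelian simple Lie algebra in characteristic zero has dimension at least $3$, which is elementary.
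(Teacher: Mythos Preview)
Your proof is correct, but it takes a different route from the paper's. Both arguments begin the same way: set $S = G \cap \SL_n(K) = \ker(\det|_G)$, note it is a definable normal subgroup, and use definable almost simplicity to reduce to the case where $S$ is finite. From there the approaches diverge.

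The paper proceeds by pure group theory: since $S$ is finite and normal, it is central (Remark~\ref{rem:simple}), so $G/Z(G)$ embeds in the abelian group $K^\times$; hence $[G,G] \leq S$ is finite, every conjugacy class is finite, every centraliser has finite index, and by definable almost simplicity every centraliser equals $G$, forcing $G$ abelian. This argument is entirely elementary and does not invoke any of the Lie-algebra machinery of Section~\ref{sec:tan-Lie}.

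Your argument instead compares dimensions: $\dim G \le 1$ from the finite-fibred map to $K^\times$, versus $\dim G = \dim L(G) \ge 3$ from Theorem~\ref{simpleliealg} and the classification of low-dimensional Lie algebras. This is perfectly valid, and your handling of the non-abelianness of $L(G)$ via Lemma~\ref{2.32}(i) and the normal core is clean (the core is a \emph{finite} intersection since $A$ has finite index, so definability is fine). The trade-off is that you are invoking the substantial results of Section~\ref{sec:tan-Lie} for what the paper achieves with a two-line group-theoretic trick; on the other hand, since Lemma~\ref{al-simp} sits immediately after Theorem~\ref{simpleliealg} and is only used in characteristic~$0$, there is no real cost to your approach in context.
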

\begin{proof} Suppose that $G$ is not contained in $\SL_n(K)$. Then there is a definable non-trivial homomorphism $\sigma:G \to \GL_n(K)/\SL_n(K) \cong (K^\times,\cdot)$. As $\Ker(\sigma)$ is a definable proper normal subgroup of $G$, it is  finite, and its centraliser equals $G$, so $\Ker(\sigma)$ is central (see Remark~\ref{rem:simple}), and $G$ is nilpotent of class 2. Now for any $g\in G$, $C_G(g)$ has finite index in $G$ so equals $G$. It follows that $G$ is abelian, a contradiction.
\end{proof}

The following result pulls together what is needed from this section. 

\begin{proposition} \label{linearityproof}
Let $(K,v,\ldots)$ be a 1-$h$-minimal valued field, and let $G \subseteq K^n$ be a definable definably almost simple non-abelian group, equipped with the dwsd Lie group structure as in Proposition~\ref{p.Gman}. Then there is $d\in \N$ and a definable homomorphism $\rho\colon  G \to \SL_d(K)$ with  kernel $Z(G)$, given by the adjoint action of $G$ on the tangent space $T_e(G)$. Moreover, $T_e(G)$ carries the structure of a simple Lie algebra $\mathfrak{g}$ over $K$, and $\rho(G)$
 acts as a  group of automorphisms of $\mathfrak{g}$.  We have $\dim(G)=\dim_K(\mathfrak{g})$.
\end{proposition}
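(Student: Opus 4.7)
The plan is to take $\rho$ to be the adjoint representation $\Ad\colon G \to \Aut(T_e(G))$ constructed in Section~\ref{sec:tan-Lie}. Since by Remark~\ref{rem:manichoices}(3) we have $\dim G = \dim T_e(G) =: d$, a choice of basis identifies $\Aut(T_e(G))$ with $\GL_d(K)$ and turns $\Ad$ into a definable homomorphism $\rho\colon G \to \GL_d(K)$. Lemma~\ref{2.27} gives $T_e(G)$ the structure of a Lie algebra $\mathfrak{g}$ over $K$, and the remark following it states that each $\Ad g$ is a Lie algebra automorphism; so $\rho(G)$ will automatically act by Lie algebra automorphisms on $\mathfrak{g}$, and the dimension identity $\dim G = \dim_K \mathfrak{g}$ is immediate.

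Next I would compute the kernel of $\rho$. As $\rho$ is definable, $\Ker(\rho)$ is a definable normal subgroup of $G$, hence by definable almost simplicity either finite or equal to $G$. First I would note that $G$ has no proper definable subgroup of finite index: the normal core of such a subgroup would be a proper definable normal subgroup of finite index, hence finite by definable almost simplicity, forcing $G$ finite---a contradiction. If $\Ker(\rho) = G$, then Lemma~\ref{2.26}(i) would yield $C_G(g) = G$ for every $g$, making $G$ abelian; thus $\Ker(\rho)$ is finite, and by Remark~\ref{rem:simple} it is contained in (and so equals) $Z(G)$.

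To get $\rho(G) \subseteq \SL_d(K)$, I would apply Lemma~\ref{al-simp} to the definable subgroup $\rho(G) \le \GL_d(K)$, which is isomorphic to $G/Z(G)$ and is definably simple by Remark~\ref{rem:simple}, as well as infinite since $Z(G)$ is finite. The nontrivial hypothesis to check is non-abelianness: if $G/Z(G)$ were abelian, then $[G,G] \le Z(G)$ is finite, so $C_G([G,G]) = G$ by the core argument, and for any fixed $g \in G$ the map $h \mapsto [g,h]$ is then a homomorphism from $G$ into the finite group $Z(G)$; hence $C_G(g)$ has finite index, so $C_G(g) = G$, and $G$ is abelian---contradiction.

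Finally, simplicity of $\mathfrak{g}$ follows by applying Theorem~\ref{simpleliealg} to the definably almost simple non-abelian subgroup $\rho(G) \le \SL_d(K)$: its Lie algebra is simple. By Lemma~\ref{2.31}(i), the kernel of $\adj = d\rho_e\colon \mathfrak{g} \to \End(\mathfrak{g})$ is the Lie algebra of $\Ker(\rho) = Z(G)$, which is $0$ since $Z(G)$ is finite; hence $\adj$ is injective, and by the dimension equality $\dim G = \dim \rho(G)$ it identifies $\mathfrak{g}$ with the Lie algebra of $\rho(G)$. The most delicate bookkeeping is precisely this identification of $\mathfrak{g}$ with $L(\rho(G))$ via $\adj$: it is what lets us transport simplicity from $L(\rho(G))$ back to $\mathfrak{g}$, and it relies crucially on the fact that $Z(G)$ is finite (which in turn came from definable almost simplicity together with the non-abelian hypothesis).
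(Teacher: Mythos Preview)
Your proof is correct and follows essentially the same approach as the paper: both take $\rho = \Ad$, compute $\Ker(\Ad) = Z(G)$ via definable almost simplicity and Lemma~\ref{2.26}(i), land in $\SL_d(K)$ via Lemma~\ref{al-simp}, and invoke Theorem~\ref{simpleliealg} for simplicity of $\mathfrak{g}$. You are in fact more careful than the paper on one point: the paper applies Theorem~\ref{simpleliealg} directly to $G$, but that theorem is stated for subgroups of $\GL_n(K)$, which $G$ is not assumed to be; your detour through $\rho(G)\le \SL_d(K)$ and the identification $\mathfrak{g}\cong L(\rho(G))$ via $\adj$ fills this gap. The only thing you leave implicit is that this identification is a Lie algebra isomorphism (not just linear), which follows from the Jacobi identity for $\adj$ together with the standard fact that the Lie bracket on $L(\GL_d(K))$ is the matrix commutator.
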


\begin{proof}
For $g\in G$, let ${\rm Int}_g\colon G \to G$ be the map $x \mapsto gxg^{-1}$.
 Then the differential $d_e {\rm Int}_g$ is a definable linear map $T_e(G) \to T_e(G)$,
and gives the adjoint representation ${\rm Ad}\colon  G \to \GL_d(K)$, defined by putting $\Ad(g)=d_e {\rm Int}_g$.

Suppose first for a contradiction that $G=\Ker(\Ad)$. Then for each $x\in G$, $C_G(x)$ has finite index in $G$ (by Lemma~\ref{2.26} (i)); hence $C_G(x)=G$ by definable almost simplicity of $G$, for otherwise $\bigcap_{g\in G}g^{-1}C_G(x)g$ is a proper definable normal subgroup of $G$. Thus $G$ is abelian, a contradiction.

Thus,  ${\rm Ad}$ has finite  kernel containing $Z(G)$, and $C_G(\Ker(\Ad))$ has finite index in $G$ so equals $G$ by definable almost simplicity, so $\Ker(\Ad)=Z(G)$.
Since by Lemma~\ref{al-simp} ${\rm Ad}$ has image in $\SL_n(K)$), we obtain the first assertion, putting $\rho:=\Ad$. 
Furthermore, by Lemma~\ref{2.27}, the tangent space  of $G$ at the identity carries the structure of a  Lie algebra $\mathfrak{g}=L(G)$ of $G$, and by Theorem~\ref{simpleliealg}, $\mathfrak{g}$ is a simple Lie algebra. In particular, we have $\rho(G)\leq\Aut(\mathfrak{g})$.

Finally, $\dim(G)=\dim(T_e(G))$, and this equals the $K$-vector space dimension of $\mathfrak{g}$; see e.g. Remark~\ref{rem:manichoices}(3).
\end{proof}

\section{Bounded subgroups}
\label{sec:bounded}

The purpose of this section is to present and adapt results from \cite{prasad} which are key to our proof, and (in Proposition~\ref{prop:main}) to eliminate the possibility that the group $G$ in Theorem~\ref{main} is `bounded'.

If $(K,v)$ is any valued field, we shall say that a subset of $K^n$ is \emph{bounded} if it is contained in the ball of $K^n$ of form $B_{\gamma}(0)$ around 0, for some $\gamma\in\Gamma$ (see also \cite[Section 3]{bt}). For example 
the local ring $\O$ of the valuation is a bounded set.
We say that a subgroup $G \subset \mathbf H(K)$ is \emph{bounded}, where $\mathbf{H}$ is a linear algebraic $K$-group, if $G$ is bounded as a subset of $K^{n^2}$ when we consider $\mathbf{H}$ as being embedded into $\SL_n$. Note that we have a running assumption that the valuation $v$ is surjective, so if a set $B$ is bounded then  there is $a\in K$ such that for all $b\in B$ we have $v(a)<v(b)$. 

\begin{remark} \label{boundedness} Note that this notion of boundedness of $G$ is well-defined in the sense that it does not depend on the embedding of  $\mathbf{H}$ into $\SL_n$.
Indeed, if $\mathbf H \subset \SL_n$ and $\mathbf H' \subset \SL_{n'}$ are linear algebraic $K$-groups and $f\colon \mathbf H \to \mathbf H'$ is a $K$-isomorphism, then a bound on $G \subset \mathbf H(K)$ yields a bound on $f(G) \subset \mathbf H'(K)$ (since $f$ is defined by polynomials with coefficients in $K$), and similarly for $f^{-1}$.
\end{remark}

\subsection{Bounded subgroups of $\mathbf{H}(K)$}

In this subsection, we record the following two theorems, which  are crucial to our proof of Theorem~\ref{main}. In the first theorem, we have added the assumption that $K$ is perfect for consistency with our definitions in this paper.

\begin{theorem}[{\cite[Theorem (BTR) on p.~198]{prasad}}] \label{thm:prasadi}
Suppose that $K$ is a perfect field with a non-trivial
henselian valuation $v\colon K \to \Gamma\cup\{\infty\}$
(with possibly non-archimedean value group $\Gamma$)
and $\mathbf{H} \le \SL_n$ is a connected reductive linear algebraic $K$-group.
Then $\mathbf{H}(K)$ is bounded if and only if $\mathbf{H}$ is anisotropic over $K$.
\end{theorem}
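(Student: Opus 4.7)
\emph{Plan.} I would prove the two directions separately. For the easy direction, suppose $\mathbf{H}$ is $K$-isotropic, with a $K$-split torus $\mathbf{T} \cong \mathbf{G}_m^t$ of dimension $t \ge 1$ inside $\mathbf{H}$. Then $(K^\times)^t \hookrightarrow \mathbf{T}(K) \hookrightarrow \mathbf{H}(K)$ under any fixed embedding $\mathbf{H} \hookrightarrow \SL_n$. Since $v$ is surjective and $\Gamma$ is nontrivial, pick $a \in K^\times$ with $v(a) > 0$; then $v(a^{-n})$ is unbounded below in $\Gamma$. The corresponding diagonal matrices in $\mathbf{H}(K)$ have entries of arbitrarily negative valuation, so $\mathbf{H}(K)$ is unbounded in $K^{n^2}$.

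For the hard direction, suppose $\mathbf{H}$ is $K$-anisotropic. The classical Bruhat--Tits--Rousseau theorem as proved by Prasad handles the case when $\Gamma$ is archimedean (embeddable into $(\R,+)$); the perfectness hypothesis on $K$ is either automatic or a mild addition to Prasad's hypotheses. For general Krull valuations, my plan is to reduce to the archimedean case by coarsening. If $\Gamma_0 \subsetneq \Gamma$ is a proper convex subgroup with $\Gamma/\Gamma_0$ archimedean (i.e.\ $\Gamma_0$ is a maximal proper convex subgroup), the coarsened valuation $v_0 \colon K \to \Gamma/\Gamma_0$ is still henselian, and $K$-anisotropy of $\mathbf{H}$ is an algebraic property independent of the chosen valuation. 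So Prasad's theorem applied to $v_0$ gives $\mathbf{H}(K) \subseteq \mathbf{H}(\mathcal{O}_{v_0})$, and then reduction modulo $\mathcal{M}_{v_0}$ lands in $\bar{\mathbf{H}}(k_{v_0})$, where $k_{v_0}$ carries an induced valuation $\bar v$ with value group $\Gamma_0$. Provided $\bar{\mathbf{H}}$ remains reductive and $k_{v_0}$-anisotropic, induction on the length of the convex filtration of $\Gamma$, combined with boundedness of the congruence kernel $\{g \in \mathbf{H}(\mathcal{O}_{v_0}) : g \equiv I \pmod{\mathcal{M}_{v_0}}\}$, gives $v$-boundedness of $\mathbf{H}(K)$.

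The main obstacle is the case the authors highlight in the introduction: when $\Gamma$ has no maximal proper convex subgroup, no archimedean coarsening of $v$ exists, and the clean induction above breaks down. Here I would argue pointwise instead: to bound $\mathbf{H}(K)$ by some $\gamma \in \Gamma$, pick a proper convex subgroup $\Gamma_1 \subsetneq \Gamma$ containing $\gamma$, find a suitable maximal proper convex subgroup $\Gamma_0$ of $\Gamma_1$, and coarsen $v$ to value group $\Gamma/\Gamma_0$. The archimedean case then controls valuations modulo $\Gamma_0$, which suffices because $\gamma \in \Gamma_1 \setminus \Gamma_0$ sits above $\Gamma_0$ in $\Gamma$. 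The subtle points are verifying the descent step (that $\bar{\mathbf{H}}$ remains reductive and anisotropic over $k_{v_0}$, where the perfectness of $K$ enters) and handling the book-keeping so that the various coarsenings produce compatible bounds; this is essentially the ``additional work'' for arbitrary Krull valuations that the introduction alludes to.
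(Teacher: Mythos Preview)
You have misread the paper's treatment of this result. The paper does \emph{not} supply a proof of Theorem~\ref{thm:prasadi} at all: it simply cites Prasad's proof and remarks that ``The proof of the above Theorem~\ref{thm:prasadi} in \cite{prasad} (attributed there to Bruhat, Tits, and Rousseau) does not use the archimedean value group assumption (see also the proof of Theorem 1.1 of \cite{prasad2}).'' So no reduction to the archimedean case is needed here; Prasad's argument works verbatim for arbitrary Krull valuations. The ``additional work'' and the difficulty when $\Gamma$ has no maximal proper convex subgroup that you allude to from the introduction concern Theorem~\ref{thm:prasadii}, not this one.

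Your proposed reduction also has genuine gaps. The descent step --- that the reduction $\bar{\mathbf H}$ modulo $\mathcal M_{v_0}$ remains reductive and $k_{v_0}$-anisotropic --- is a substantial result (essentially part of Bruhat--Tits theory) and cannot be assumed for free; indeed, making sense of $\bar{\mathbf H}$ already requires knowing $\mathbf H(K) \subset \mathbf H(\mathcal O_{v_0})$ together with a good integral model. More seriously, your final paragraph is confused: if $\Gamma_0$ is a maximal proper convex subgroup of $\Gamma_1$, that does not make $\Gamma/\Gamma_0$ archimedean (you would need $\Gamma_0$ maximal in $\Gamma$, which by hypothesis does not exist), so you cannot invoke the archimedean case after coarsening to $\Gamma/\Gamma_0$. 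The easy direction you sketch is fine.
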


\begin{theorem}\label{thm:prasadii}
Suppose $(K,v,\ldots)$ is a 1-$h$-minimal expansion of a henselian valued field of characteristic 0. Let
$\mathbf{H} \le \SL_n$ be a semisimple almost $K$-simple algebraic $K$-group.
Then any unbounded definable open subgroup $G$ of $\mathbf{H}(K)$ 
contains $\mathbf{H}(K)^+$.
\end{theorem}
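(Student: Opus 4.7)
I would adapt Prasad's proof for local fields \cite{prasad} to the Krull-valuation / $1$-$h$-minimal setting, using the preparation property of $1$-$h$-minimality (Definition~\ref{1h}) and the dimension theory of Proposition~\ref{oldhyp} as substitutes for the local-compactness arguments available in Prasad's archimedean setup.

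First I would reduce the conclusion to a concrete statement about unipotent radicals. Since $G \subseteq \mathbf{H}(K)$ is unbounded, the ambient group $\mathbf{H}(K)$ is unbounded too, so Theorem~\ref{thm:prasadi} forces $\mathbf{H}$ to be $K$-isotropic. Fix a non-trivial maximal $K$-split torus $\mathbf{S} \subseteq \mathbf{H}$, a minimal parabolic $K$-subgroup $\mathbf{P} \supseteq C_{\mathbf{H}}(\mathbf{S})$, and write $\mathbf{U}$, $\mathbf{U}^-$ for the unipotent radicals of $\mathbf{P}$ and of its opposite. By Remark~\ref{rem:g+}, $\mathbf{H}(K)^+$ is generated by the unipotent elements of $\mathbf{H}(K)$, and a standard Borel--Tits generation result gives $\langle \mathbf{U}(K), \mathbf{U}^-(K) \rangle \supseteq \mathbf{H}(K)^+$. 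So it suffices to show $\mathbf{U}(K) \subseteq G$ and $\mathbf{U}^-(K) \subseteq G$, and the symmetry $\mathbf{P} \leftrightarrow \mathbf{P}^-$ reduces this further to $\mathbf{U}(K) \subseteq G$.

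The main technical step is an ``expansion'' argument. Openness of $G$ supplies a valuation-open neighborhood $V$ of $e$ with $V \subseteq G$; in particular $V_{\mathbf{U}} := V \cap \mathbf{U}(K)$ is a neighborhood of $e$ in $\mathbf{U}(K)$. I would propagate this to all of $\mathbf{U}(K)$ by exploiting the fact that $\mathbf{S}$ acts on $\mathbf{U}$ through the positive roots, so conjugation by $s \in \mathbf{S}(K)$ rescales each root subgroup $\mathbf{U}_{\alpha}(K) \cong \mathbf{G}_a(K)$ by multiplication by $\alpha(s) \in K^{\times}$. To produce such rescalers inside $G$, apply the Bruhat decomposition of $\mathbf{H}(K)$ to the given unbounded element $g \in G$, writing $g = u_1 n s u_2$ with $u_i \in \mathbf{U}(K)$, $s \in \mathbf{S}(K)$ and $n$ a Weyl representative; the $u_i$ can be absorbed up to bounded error via openness of $G$ and $n$ contributes only bounded ambiguity, so $s$ itself must be unbounded along some root direction. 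Iterating this extraction, together with finitely many conjugations inside $G$ that permute the simple roots, yields rescalers in $G$ which expand $V_{\mathbf{U}}$ to all of $\mathbf{U}(K)$.

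The main obstacle lies in this last extraction, which is clean only when the value group $\Gamma$ admits a maximal proper convex subgroup $\Gamma_0$: one then coarsens $v$ to the archimedean valuation $v / \Gamma_0$ and reduces directly to Prasad's setting, and by Example~\ref{ex1h}(3) the coarsened structure remains $1$-$h$-minimal with $G$ still definable and open, so Prasad's arguments transport almost verbatim. When $\Gamma$ has no maximal proper convex subgroup --- flagged in the introduction as the hardest case --- the torus element extracted from a single $g$ might be unbounded only within some proper convex subgroup of $\Gamma$, so no one coarsening suffices. I would handle this by running the extraction simultaneously over the cofinal family of coarsenings by proper convex subgroups and using the preparation of Definition~\ref{1h} to produce a \emph{definable} family of rescaling torus elements in $G$ indexed by $\Gamma$; the definability of this family, combined with the dimension-theoretic bounds of Proposition~\ref{oldhyp}, should then force the family to cover every character direction of $\mathbf{S}$, yielding the desired inclusion $\mathbf{U}(K) \subseteq G$.
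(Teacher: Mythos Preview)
Your proposal has two genuine gaps.

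First, the Bruhat-decomposition extraction does not work as stated. Writing an unbounded $g \in G$ as $g = u_1 n s u_2$ gives no control over $s$: an unbounded unipotent element (e.g.\ $\left(\begin{smallmatrix}1&x\\0&1\end{smallmatrix}\right)$ in $\SL_2$) has $s = 1$. Your claim that ``the $u_i$ can be absorbed up to bounded error via openness of $G$'' is unclear---openness gives you small unipotents in $G$, not a bound on the unipotent parts of an arbitrary $g$. Even if $s$ were unbounded, you need rescalers \emph{in} $G$, and $s$ need not lie in $G$. The paper (following Prasad's Lemma~1) instead uses a trace-form argument on the adjoint representation to produce a \emph{semisimple} $g \in G$ whose $\Ad(g)$ has an eigenvalue of negative valuation, and then chooses a maximal torus $\mathbf{T}$ containing $g$ (rather than fixing $\mathbf{S}$ in advance); conjugation by $g$ itself then rescales the root subgroups of $\mathbf{T}$.

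Second, and more seriously, your treatment of the hard case (no maximal proper convex subgroup in $\Gamma$) is not a proof sketch but a hope. Preparation in Definition~\ref{1h} governs definable subsets of $K$ in one variable; it is not clear what ``definable family of rescaling torus elements indexed by $\Gamma$'' you would construct from it, nor how the dimension statements of Proposition~\ref{oldhyp} would force that family to cover all root directions. The paper's route is structurally different and worth knowing: after reducing to equicharacteristic $0$, pass to an elementary extension which is a Hahn field $K = k((t^\Gamma))$, arranged so that $\mathbf{H}$ is defined over a proper Hahn subfield. For each positive $\lambda \in \Gamma$ one has the Hahn \emph{subfield} $K_\lambda = k((t^{C_\lambda}))$ (not a coarsening), on which the coarsened valuation $v_\lambda$ with value group $C_\lambda / C_\lambda^-$ is genuinely archimedean. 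One shows---by an adaptation of Prasad's Lemmas~1 and 2---that for arbitrarily large $\lambda$ the subgroup $G_\lambda := G \cap \mathbf{H}(K_\lambda)$ contains $\mathbf{H}(K_\lambda)^+$. Finally, valuation-density of the union $K_\infty = \bigcup_\lambda K_\lambda$ in $K$, together with openness of $G$, lifts this to $\mathbf{H}(K)^+ \subseteq G$. The passage to subfields is precisely what converts the non-archimedean difficulty into an archimedean one without losing the group $G$; your coarsening idea changes the valuation but keeps the same field, which is why it stalls.
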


The context in \cite{prasad} is that of Henselian valuations whose value group $\Gamma$ is archimedean, that is, has the property that if $\gamma\in \Gamma$ with $\gamma>0$ then $\{n\gamma:n\in \mathbb{N}\}$ is cofinal in $\Gamma$. We need these statements for arbitrary value group, \emph{i.e.} for a {\em Krull valuation}. 
The proof of the above Theorem~\ref{thm:prasadi} in \cite{prasad} (attributed there to Bruhat, Tits, and Rousseau) does not use the archimedean value group  assumption (see also the proof of Theorem 1.1 of \cite{prasad2}). For Theorem~\ref{thm:prasadii}, which is based on a proof in \cite{prasad} of an unpublished result of Tits, the situation is slightly more complicated. Our conclusion (that $G$ contains $\mathbf{H}(K)^+$) is slightly stronger than that in \cite{prasad}, and moreover, Prasad's proof really seems to be using the assumption that the value group is archimedean.  It is not very difficult to modify Prasad's proof to get the stronger conclusion, and it is also not difficult to generalize it to value groups which have a maximal proper convex subgroup (since then, the valuation can be coarsened to an archimedean one). However, getting the result for general value groups seems to be more tricky. We at least were only able to prove the general case under the assumption that $G$ is definable -- an assumption which Prasad does not need.

We also need the following positive characteristic version of Theorem~\ref{thm:prasadii}:

\begin{proposition}\label{prop:prasadiii}
Suppose that $(K,v)$  is a pure algebraically closed valued field of characteristic $p$. 
Let
 $\mathbf{H} \le \SL_n$ be a semisimple almost $K$-simple algebraic $K$-group.
 Then any proper definable open subgroup $G$ of $\mathbf{H}(K)^+$ is bounded.
\end{proposition}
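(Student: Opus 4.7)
Because $K$ is algebraically closed, Remark~\ref{rem:g+acf} gives $\mathbf{H}(K)^+ = \mathbf{H}(K)$, so the task is to show that every proper definable open subgroup $G \subsetneq \mathbf{H}(K)$ is bounded. I plan to argue by contrapositive: assuming $G$ is unbounded, I will derive $G = \mathbf{H}(K)$, contradicting properness.

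Since $K$ is algebraically closed, fix a $K$-split maximal torus $\mathbf{T} \le \mathbf{H}$ with root system $\Phi$ and root subgroups $\mathbf{U}_\alpha \cong \mathbf{G}_a$, satisfying $t\,\mathbf{U}_\alpha(x)\,t^{-1} = \mathbf{U}_\alpha(\alpha(t)x)$ for $t \in \mathbf{T}(K)$. Openness of $G$ in $\mathbf{H}(K)$ implies that each $G \cap \mathbf{U}_\alpha(K)$ is a definable open subgroup of the additive group $(K,+)$. By quantifier elimination in pure $\ACVF_p$, every such subgroup is either a bounded fractional ideal $\{x : v(x) \ge \gamma_\alpha\}$ or all of $K$. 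Since $\mathbf{H}(K)^+ = \mathbf{H}(K)$ is generated by the $\mathbf{U}_\alpha(K)$ (Remark~\ref{rem:g+}), the task reduces to showing $G \cap \mathbf{U}_\alpha(K) = \mathbf{U}_\alpha(K)$ for every $\alpha \in \Phi$.

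To transfer unboundedness of $G$ into unboundedness along each root, I combine openness of $G$ (which gives a principal congruence subgroup $\mathbf{H}_n(\O) := \{h \in \mathbf{H}(\O) : h \equiv I \pmod{\M^n}\} \subseteq G$) with the Cartan decomposition $\mathbf{H}(K) = \mathbf{H}(\O)\cdot \mathbf{T}(K)\cdot \mathbf{H}(\O)$, valid over algebraically closed valued fields via Smith-type normal forms. An unbounded $g \in G$ writes as $g = k_1 t k_2$ with $k_i \in \mathbf{H}(\O)$ and $t \in \mathbf{T}(K)$ having $v(\alpha(t))$ unboundedly negative for some $\alpha$. Since $\mathbf{H}_n(\O)$ is normal in $\mathbf{H}(\O)$, the conjugate $g\,\mathbf{H}_n(\O)\,g^{-1} = k_1(t\,\mathbf{H}_n(\O)\,t^{-1})k_1^{-1}$ lies in $G$, and $t\,\mathbf{H}_n(\O)\,t^{-1}$ contains $\mathbf{U}_\alpha(\{x:v(x) \ge n + v(\alpha(t))\})$ for each $\alpha$. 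The bounded $k_1$-conjugation then yields unbounded subsets of (conjugates of) root subgroups inside $G$; varying $g \in G$ and invoking almost $K$-simplicity of $\mathbf{H}$ (so that unboundedness transfers between roots via Weyl-group conjugation by representatives in $N_{\mathbf{H}(K)}(\mathbf{T}) \cap \mathbf{H}(\O)$, adjustable into $G$ modulo $\mathbf{H}_n(\O)$) eventually promotes every $G \cap \mathbf{U}_\alpha(K)$ to all of $\mathbf{U}_\alpha(K)$, giving the desired $G = \mathbf{H}(K)$.

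The main obstacle is the arbitrariness of the Krull value group $\Gamma$: unboundedness need not be captured by a single cofinal sequence, and simultaneous control of $v(\alpha(t))$ across all roots requires combining several elements of $G$. Here the definability of $G$ (and in particular of $G \cap \mathbf{T}(K)$, whose image in $X_*(\mathbf{T}) \otimes_{\mathbb{Z}} \Gamma$ is a definable semilinear subgroup that, if unbounded at all, must be unbounded in a spanning set of directions) is essential. A secondary technicality is controlling the effect of the bounded factor $k_1$, which does not preserve individual root subgroups but acts by an algebraic-group automorphism sending $\mathbf{U}_\alpha$ to another one-parameter unipotent subgroup whose valuation coordinates can still be read off; this is enough to maintain the stretching argument after passing to a conjugate root subgroup.
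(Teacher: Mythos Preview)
Your approach is genuinely different from the paper's, and the gaps you flag are real and not closed by your sketch. The paper bypasses all root-subgroup analysis: it passes to an elementary extension of $(K,v)$ which is a Hahn field $k((t^{\Gamma_1}))$ whose value group $\Gamma_1$ has a maximal proper convex subgroup $\Gamma_2$, then coarsens to the archimedean valuation with value group $\Gamma_1/\Gamma_2$, and finally invokes Prasad's Theorem~(T) (the Tits result, valid for archimedean value groups) directly. Definability of $G$ is used only to transfer the properties ``proper, open, unbounded'' across the elementary extension, and this is exactly where the pure-$L_{\divv}$ hypothesis earns its keep.

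Your direct argument has a genuine gap at the ``transfer between roots'' step. After writing $g = k_1 t k_2$ you correctly obtain $g\,\mathbf{H}_n(\O)\,g^{-1} \subseteq G$, and this contains large pieces of the groups $k_1 \mathbf{U}_\alpha k_1^{-1}$; but these are \emph{not} root subgroups for the fixed torus $\mathbf{T}$, and moving back to $\mathbf{U}_\alpha$ requires conjugating by $k_1^{-1}$, which you do not know lies in $G$. Your proposed remedy via Weyl-group representatives $w \in N_{\mathbf{H}(K)}(\mathbf{T}) \cap \mathbf{H}(\O)$ has the same defect: conjugation by $w$ preserves $G$ only if $w \in G$, and ``adjusting modulo $\mathbf{H}_n(\O)$'' does not help, since $w h \in G$ with $h \in \mathbf{H}_n(\O) \subseteq G$ forces $w \in G$ already. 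This is precisely the difficulty the paper sidesteps by reducing to the archimedean case, where Prasad's own argument handles it. Separately, the Cartan decomposition $\mathbf{H}(K) = \mathbf{H}(\O)\,\mathbf{T}(K)\,\mathbf{H}(\O)$ for a general semisimple $\mathbf{H}$ over an arbitrary Krull-valued field also needs justification; Bruhat--Tits theory is not set up for this generality, and Smith normal form only covers $\SL_n$.
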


Here, the situation is easier than for Theorem~\ref{thm:prasadii}: firstly, we only need the weaker conclusion which also appears in \cite{prasad}, and secondly, using that we only work in the pure valued field language, we can easily reduce to the case of the value group having a maximal proper convex subgroup, so that we can then cite \cite{prasad}.

\begin{proof}[Proof of Proposition~\ref{prop:prasadiii}]
Suppose for a contradiction that $G$ is an unbounded proper definable open subgroup of 
$\mathbf{H}(K)^+$. Let $\Gamma_1$ be a divisible ordered abelian group containing $\Gamma$ and having a maximal proper convex subgroup $\Gamma_2$, and let $\Delta=\Gamma_1/\Gamma_2$, an archimedean ordered abelian group. Also let $k$ be the residue field of $K$. By model-completeness in $L_{{\rm div}}$, the field $(K,v)$ is an elementary substructure of the Hahn field $(k((t^{\Gamma_1})),v_1)$ where $v_1$ is the natural valuation with value group $\Gamma_1$. Using that $G$ is definable, that $\mathbf{H}(K)^+$ is definable when $K$ is algebraically closed (see Lemma~\ref{lem:kneser}(ii)) and that the conditions `bounded' and `open' are first order expressible, we may replace $K$ by $k((t^{\Gamma_1}))$, that is, we may assume $\Gamma=\Gamma_1$. 

Now let $v_\Delta$ be the induced coarsened valuation $v_\Delta:K\to \Gamma_1\to \Delta$. The group $G$ is still an unbounded and open subgroup of $\mathbf{H}(K)^+$ with respect to this valuation, and is also a proper subgroup. This contradicts Theorem (T) (the theorem attributed by Prasad to Tits) on \cite[p.~198]{prasad}. 
\end{proof}

The remainder of this subsection is devoted to the proof of Theorem~\ref{thm:prasadii}. We start with a preliminary lemma which is probably known, but since we didn't find it in the literature, we give a proof.

\begin{lemma}\label{iso-ext}
Suppose that $K_1$ and $K_2$ are two spherically complete valued fields of equi-characteristic $0$, which both have the same value group $\Gamma$ and the same residue field $k$. Suppose moreover that $K_1$ and $K_2$ have a common subfield $K_0$. Then there exists a valued field isomorphism $f\colon K_1 \to K_2$ which is the identity on $K_0$ and which induces the identity on $\Gamma$ and on $k$.
\end{lemma}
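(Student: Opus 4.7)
The plan is to run a Kaplansky-style back-and-forth argument: use Zorn's lemma to build a maximal partial valued-field isomorphism $f\colon A \to B$ extending $\id_{K_0}$, reduce to the case where $v(A)=\Gamma$ and $\res(A)=k$ by explicit extension steps, and then invoke Kaplansky's uniqueness theorem for maximally complete immediate extensions in equi-characteristic~$0$ to extend $f$ to all of $K_1 \to K_2$.

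First we set up the Zorn argument. Let $\mathcal{F}$ be the set of valued-field isomorphisms $f\colon A \to B$ with $K_0 \subseteq A \subseteq K_1$ and $K_0 \subseteq B \subseteq K_2$, such that $f|_{K_0}=\id$ and $f$ induces the identity map on $v(A) \subseteq \Gamma$ and on $\res(A) \subseteq k$. Ordering $\mathcal{F}$ by extension, unions of chains plainly remain in $\mathcal{F}$, so Zorn yields a maximal element $f\colon A \to B$. Note that $K_1$ and $K_2$ are both Henselian (spherical completeness implies Henselianity).

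Second, we argue that at a maximal $f$ we must have $v(A)=\Gamma$ and $\res(A)=k$. If $\gamma \in \Gamma \setminus v(A)$, pick $a \in K_1$ and $b \in K_2$ with $v(a)=v(b)=\gamma$, possible because $v(K_i)=\Gamma$. If $\gamma$ has infinite order in $\Gamma/v(A)$, then $A(a)/A$ and $B(b)/B$ are purely transcendental valued extensions whose valuation is determined by $\gamma$, so $f$ extends by $a \mapsto b$, contradicting maximality. If some smallest $n\ge 2$ gives $n\gamma = v(c)$ with $c\in A$, we must further arrange $\res(b^n/f(c)) = \res(a^n/c)$; this amounts to finding a unit $u \in K_2^\times$ with $\res(u)^n$ equal to a prescribed element of $k^\times$, which is possible by Hensel's lemma applied to $y^n - (\res(a^n/c)/\res(b^n/f(c)))$ in $K_2$, after lifting one approximate solution from $k$. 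Replacing $b$ by $ub$, we extend $f$ to $A(a)\to B(b)$. The residue-field extension case $\res(A) \subsetneq k$ is treated analogously: pick $a \in K_1$, $b \in K_2$ with value $0$ and common residue $\bar\alpha \in k \setminus \res(A)$, extending $f$ purely transcendentally if $\bar\alpha$ is transcendental over $\res(A)$, and via Hensel otherwise. Maximality therefore forces $v(A)=\Gamma$ and $\res(A)=k$.

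Third, with $v(A)=\Gamma=v(K_1)$ and $\res(A)=k=\res(K_1)$ (and likewise for $B,K_2$), both $K_1/A$ and $K_2/B$ are \emph{immediate} extensions, and $K_1,K_2$ are maximally complete (= spherically complete). Kaplansky's classical uniqueness theorem for maximally complete immediate extensions in equi-characteristic $0$ then shows that any valued-field isomorphism $A\to B$ extends uniquely to a valued-field isomorphism $K_1\to K_2$ — this is our desired $f$, and by construction it restricts to the identity on $K_0$, $\Gamma$, and $k$.

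The principal obstacle is the use of Kaplansky's uniqueness theorem in the third step: it is exactly the equi-characteristic~$0$ hypothesis that rules out the defect phenomena which in mixed/positive residue characteristic would destroy uniqueness of maximally complete immediate extensions. A secondary (more routine) technicality is the "ramified" sub-case of the value-group extension step, where Henselianity of $K_2$ is needed to adjust $b$ by a suitable unit so that the residue of $b^n/f(c)$ matches that of $a^n/c$.
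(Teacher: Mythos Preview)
Your approach is a legitimate alternative to the paper's. The paper proceeds model-theoretically, treating $K_1$ and $K_2$ as two-sorted structures with an $\RV$ sort and extending the partial isomorphism one element at a time using relative quantifier elimination (for the algebraic case) and spherical completeness (to realise the cut type in the transcendental case). You instead run a classical valuation-theoretic argument, separating the value-group, residue-field and immediate extension steps and invoking Kaplansky's uniqueness theorem for the last. Both strategies are natural, and your identification of Kaplansky's theorem as the place where equi-characteristic~$0$ and spherical completeness enter is correct.

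However, there is a genuine gap in your ramified sub-case. You need a unit $u \in K_2^\times$ with $\res(u)^n = \res(a^n/c)/\res(b^n/f(c))$, and you claim Hensel's lemma provides it ``after lifting one approximate solution from $k$''. But Hensel only lifts a root that already exists in $k$; you have not shown that $r := \res(a^n/c)/\res(b^n/f(c))$ is an $n$-th power in $k$, and in general it need not be. The possible values of $\res(a^n/c)$, as $a$ ranges over elements of $K_1$ of value $\gamma$, form a single coset of $(k^\times)^n$ in $k^\times$, and likewise for $K_2$; nothing in your inductive hypothesis (that $f$ induces the identity on $v(A)$ and on $\res(A)$) forces these two cosets to agree. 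The fix is to strengthen the invariant carried through Zorn's lemma: require each $f \in \mathcal{F}$ to induce the identity on $\rv(A)$, not merely on $v(A)$ and $\res(A)$. Then $\rv(c) = \rv(f(c))$, and one computes $r = (\rv(a)/\rv(b))^n$ with $\rv(a)/\rv(b) \in k^\times$, so $r$ genuinely is an $n$-th power in $k$ and Hensel applies. This is exactly what the paper's proof builds in from the outset by carrying the whole $\RV$ sort along as part of the partial isomorphism.

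A minor point: in step~3, Kaplansky's theorem yields existence (not uniqueness) of the extension to $K_1 \to K_2$; but existence is all you need.
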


\begin{proof}
Consider all the fields as two-sorted structures, with one sort for the valued field and one sort for the leading terms. We set $\RV := \rv(K_1) = \rv(K_2)$ and $\RV_0 := \rv(K_0)$. 
Let $f$ be the identity map on $K_0 \cup \RV$ (considered as a partial isomorphism from $(K_1, \RV)$ to $(K_2, \RV)$). It suffices to verify
that given any $a \in K_1$, we can extend $f$ to an embedding
$(K_0(a), \RV) \to (K_2, \RV)$. Indeed, then this process can be repeated to obtain an embedding $(K_1, \RV)$ to $(K_2, \RV)$, and if the image $f(K_1)$ would be a proper subfield of $K_2$, then applying the same argument to $f^{-1}$ and some $b \in K_2 \setminus f(K_1)$ would lead to a contradiction.

So let $a \in K_1 \setminus K_0$ be given, and suppose first that $a \in \acl(K_0, \RV)$. Then, since $K_1 \equiv_{K_0\cup\RV} K_2$ (see for example the relative quantifier elimination to the valued field sort in \cite[Proposition 4.3]{flenner}), there exists $a' \in K_2$ with $\tp(a'/K_0\cup\RV) = \tp(a/K_0\cup\RV)$, so we can extend $f$ by setting $f(a) := a'$. Thus we may now assume that $K_0 \cup \RV$ is algebraically closed in the model theoretic sense.
In that case, the type $\tp(a/K_0\cup\RV)$ is determined by the formulas of the form $\rv(x - b) = \xi$, where $b$ runs over $K_0$ and $\xi$ runs over $\RV$. (This follows e.g. from \cite[Lemma~2.4.4]{iCR.hmin}, which states that
for $a, a' \in K_1$ to have the same type over $K_0 \cup \RV$, it suffices that the smallest ball $B \subset K_1$ containing $a$ and $a'$ is disjoint from $\acl(K_0) = K_0$. If there would exist a $b \in B \cap K_0$, then we would have $\rv(a - b) \ne \rv(a'-b)$.) Each such formula defines a valuative ball in $K_1$, and those balls form a chain. The same formulas then also define a chain of balls in $K_2$. By spherical completeness of $K_2$, that chain has non-empty intersection. Any $a'$ in that intersection has the same type as $a$ over $K_0 \cup \RV$, so that we can define $f(a) := a'$.
\end{proof}

Here is another preliminary lemma:

\begin{lemma}\label{lou2}
Let $K$ be a henselian valued field of characteristic $0$ (of any residue characteristic).
Let $\mathbf H$ be a connected linear algebraic $K$-group and 
let $U$ be a valuation-open subset of $\mathbf{H}(K)$. Then $U$ is Zariski-dense in $\mathbf{H}(K)$.
\end{lemma}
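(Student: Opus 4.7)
The plan is to reduce to a small valuative ball inside $U$ and then derive Zariski-density from a dimension count via Lemma~\ref{lou}(ii); the whole argument hinges on showing that such a ball has full topological dimension $d := \zardim \mathbf{H}$.

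I may assume $U$ is non-empty, since otherwise there is nothing to prove (the identity $e \in \mathbf{H}(K)$ ensures $\mathbf{H}(K)$ is itself non-empty). Fix $a \in U$ and choose a valuative open ball $B \subseteq U$ around $a$. Then $B$ is $L_{\divv}$-definable, so since $U \supseteq B$, it suffices to prove that $B$ is Zariski-dense in $\mathbf{H}(K)$. To compute $\dim B$ in the sense of Definition~\ref{def:dim}, I would invoke the fact that the linear algebraic $K$-group $\mathbf{H}$ is smooth at $a$ --- automatic in characteristic~$0$ --- together with the implicit function theorem for henselian valued fields of characteristic~$0$: after permuting coordinates, the defining equations $f_{d+1}, \dots, f_{n^2}$ of $\mathbf{H}$ near $a$ have invertible Jacobian with respect to the last $n^2 - d$ coordinates, so Hensel's lemma recovers those coordinates from the first $d$ on some valuative polydisk around the projection of $a$ to $K^d$. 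Shrinking $B$ if necessary, the coordinate projection $\pi\colon K^{n^2}\to K^d$ onto the first $d$ coordinates sends $B$ onto a ball in $K^d$, giving $\dim B \ge d$. The reverse inequality $\dim B \le d$ follows from $B \subseteq \mathbf{H}(K)$ together with Lemma~\ref{lou}(ii) applied to $\mathbf{H}(K)$ itself, since $\cl_{K^{\alg}}(\mathbf{H}(K)) \subseteq \mathbf{H}$ has Zariski dimension at most $d$.

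Once $\dim B = d$ is established, the conclusion is immediate. Writing $\mathbf{Y} := \cl_{K^{\alg}}(B) \subseteq \mathbf{H}$, Lemma~\ref{lou}(ii) yields $\zardim \mathbf{Y} = \dim B = d = \zardim \mathbf{H}$. Since $\mathbf{H}$, being a connected group variety, is irreducible, this forces $\mathbf{Y} = \mathbf{H}$. By Remark~\ref{note-on-dim} this identifies $\cl_K(B)$ with $\mathbf{H}(K)$, and hence $U \supseteq B$ is Zariski-dense in $\mathbf{H}(K)$, as desired.

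The main obstacle in executing this plan is making the implicit function step rigorous over an arbitrary Krull-valued henselian field of characteristic~$0$: one needs a version of Hensel's lemma and the implicit function theorem that tolerates a possibly non-archimedean value group. This is standard but is the only non-trivial external input; once it is in place, everything else is a clean dimension calculation built directly out of Lemma~\ref{lou} and the fact that connected algebraic groups are irreducible.
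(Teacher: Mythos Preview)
Your proposal is correct but takes a genuinely different route from the paper. The paper avoids the implicit function theorem entirely: it invokes Lemma~\ref{l.locdim} to find some point $x \in \mathbf{H}(K)$ at which local topological dimension equals the global dimension $\dim \mathbf{H}(K)$, then uses left-translation in the group to move $x$ into $U$, giving $\dim U = \dim \mathbf{H}(K)$; the remaining step --- passing to $\zardim \cl_{K^{\alg}}(U) = \zardim \mathbf H$ via Lemma~\ref{lou}(ii) and concluding by irreducibility of $\mathbf H$ --- is the same as yours. Your approach instead computes $\dim B = \zardim \mathbf{H}$ directly from smoothness of $\mathbf{H}$ (Cartier, characteristic~$0$) together with the henselian implicit function theorem, never touching Lemma~\ref{l.locdim} or the group multiplication. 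The paper's route stays entirely inside the model-theoretic dimension machinery already developed in Section~\ref{sec:hen-min}, requiring no external analytic input; your route is more geometric, imports one standard result, and in fact proves the stronger local statement that \emph{every} small valuative ball in $\mathbf{H}(K)$ has full topological dimension --- so it would go through verbatim for any smooth geometrically irreducible $K$-variety, not only for groups.
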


\begin{proof}
Without loss, $\mathbf H = \cl_{K^{\alg}}(\mathbf H(K))$.
By Lemma~\ref{l.locdim}, there exists an $x \in \mathbf{H}(K)$ such that for every ball $B \subset \mathbf{H}(K)$ containing $x$, we have $\dim B = \dim \mathbf{H}(K)$. By left-translation, we may assume $x \in U$, so using Lemma~\ref{lou}(ii) we obtain $\dim U = \dim \mathbf{H}(K) = \zardim \mathbf H$ and in particular also $\zardim \cl_{K^{\alg}}(U) = \zardim \mathbf H$. Since $\mathbf{H}$ is connected and hence irreducible, 
this implies $\cl_{K^{\alg}}(U) = \mathbf H$ and hence, using Remark~\ref{note-on-dim}, that $\cl_K(U) = \mathbf H(K)$.
\end{proof}

\begin{remark}\label{rem:unipot-gen}
Note that in Theorem~\ref{thm:prasadii}, $\mathbf{H}(K)^+$ is generated by the set $U$ of unipotent elements of $\mathbf{H}(K)$ (see Remark~\ref{def:defg+}). In particular, a subgroup $G \le \mathbf{H}(K)$ contains $\mathbf{H}(K)^+$ if and only if it contains $U$. Note also that $U$ can be defined by polynomials with coefficients in $K$, since it is the set of those matrices $A\in \mathbf{H}(K)$ such that $A-1$ is nilpotent, and since we can bound the nilpotency class of matrices in $\mathbf H$.
\end{remark}

\begin{proof}[Proof of Theorem~\ref{thm:prasadii}]

Let $G$ be a definable unbounded open subgroup of $\mathbf{H}(K)$, where $\mathbf{H}$ is a semisimple almost $K$-simple linear algebraic $K$-group with $\mathbf{H}(K^{\alg})\leq\SL_n\left(K^{\alg}\right)$. Our goal is to prove that $G$ contains $\mathbf{H}(K)^+$. We start by a series of reductions.

Firstly, note that we may suppose that $\mathbf{H}$ has trivial centre. Indeed, the central isogeny $\pi: \mathbf{H} \to \bar{\mathbf{H}}:=\mathbf{H}/Z(\mathbf{H})$ takes $\mathbf{H}(K)^+$ to $\bar{\mathbf{H}}(K)^+$ (see Remark~\ref{rem:kneser} (i)), takes open subgroups to open subgroups, and satisfies that the preimage of a bounded subgroup is bounded.

Before the next reduction, observe that we may freely replace $(K,v,\ldots)$ by an elementary extension. For by definability of $G$, if there is a counterexample for $(K,v\ldots)$ then there is a counterexample in the elementary extension. Indeed, we can express the condition $\mathbf{H}(K)^+\leq G$ using the parameters defining $\mathbf{H}$ and $G$ using Remark~\ref{rem:unipot-gen}.

Next, we give an argument to reduce the mixed characteristic case to that of residue characteristic 0. So suppose $(K,v,\ldots)$ is 1-$h$-minimal of mixed characteristic. By the previous paragraph, we may suppose that it is $\omega_1$-saturated. Let $\mathcal{O}_1$ be the finest valuation subring of $K$ containing $\mathcal{O}$ and $\mathbb{Q}$. There is a corresponding non-trivial convex subgroup $\Delta$ of $\Gamma$ such that $\mathcal{O}_1$ is the valuation ring with value group
$\Gamma_1=\Gamma/\Delta$, and corresponding valuation $v_1:K \to \Gamma_1$. We expand the language by a predicate for $\mathcal{O}_1$ and we now consider $K$ as a valued field with the valuation $v_1$ instead of $v$. Note that by construction of $\mathcal{O}_1$, $(K, v_1)$ is of equicharacteristic $0$, and by \cite[Theorem~2.2.8]{hmin2} $(K, v_1,\dots)$ with the expanded language is also $1$-h-minimal. Since $G$ remains unbounded if we replace $v$ by $v_1$, the result in the original structure $(K, v, \dots)$ follows from the result in the expanded structure $(K, v_1, \dots)$.
Thus, we may assume that
$(K,v,\ldots)$ is equicharacteristic 0.

Before we proceed to the next reduction, note that $K$ has an elementary extension which is a Hahn field, \emph{i.e.}, $K = k((t^\Gamma))$. Indeed,
this is essentially the statement of \cite[Theorem 1]{bw-h}. (The theorem states that a spherically complete elementary extension exists; in equicharacteristic $0$, every spherically complete valued field is already a Hahn field.)

Clearly, we may now assume that $K = k((t^\Gamma))$, but we want even more, namely that $\mathbf H$ is defined over a subfield $K' = k((t^{\Gamma'}))$, where $\Gamma'$ is a proper convex subgroup of $\Gamma$. To this end, let us first rename our current $K = k((t^\Gamma))$ to $K' = k((t^{\Gamma'}))$.
We then choose an elementary extension $K \succ K'$ with value group $\Gamma$, in such a way that $\Gamma'$ is contained in a proper convex subgroup of $\Gamma$. (To achieve this, just realize the type at $\infty$ over $\Gamma'$.) We may additionally assume by the last paragraph that $K$ is isomorphic to $k((t^\Gamma))$. (While this means that we can assume $K = k((t^\Gamma))$, note that if we do so, we only know that $\mathbf H$ is defined over a subfield $K'$ isomorphic to $k((t^{\Gamma'}))$, and not yet that we can take $K'$ to be equal to the subfield $k((t^{\Gamma'}))$ of $k((t^{\Gamma}))$.)
We apply Lemma~\ref{iso-ext} to $K_1 = K$ and $K_2 = k((t^\Gamma))$, where we identify the subfield $K' \subset K$ with
the subfield $k((t^{\Gamma'})) \subset k((t^\Gamma))$. We then use the obtained valued field isomorphism $f\colon K \to k((t^\Gamma))$ to transfer the additional structure from $K$ to $k((t^\Gamma))$,
so that we have $k((t^{\Gamma'})) \prec k((t^\Gamma))$ for the natural embedding.

Let us now fix some notation for the remainder of the proof:
Given $\lambda \in \Gamma$ with $\lambda>0$, we define:
\begin{itemize}
    \item $C_\lambda \subset \Gamma$ is the smallest convex subgroup of $\Gamma$ containing $\lambda$; $C_\lambda^- \subset C_\lambda$ is the largest convex subgroup of $\Gamma$ not containing $\lambda$.
    \item $\O_\lambda = \{a \in K \mid v(a) \in C_\lambda \vee v(a) > 0\}$ and $\O_\lambda^- = \{a \in K \mid v(a) \in C^-_\lambda \vee v(a) > 0\}$ are the valuation rings of $K$ corresponding to the value groups $\Gamma/C_\lambda$ and $\Gamma/C^-_\lambda$, respectively.
    \item $v_\lambda\colon K \to \Gamma/C^-_{\lambda} \cup \{\infty\}$ is the valuation with valuation ring $\O^-_\lambda$.
    \item $K_\lambda := k((t^{C_\lambda}))$ is the Hahn sub-field of $K$ corresponding to $C_\lambda$. Note that we have $K_\lambda \subset \O_\lambda$ and  that $K_\lambda$ is naturally isomorphic to the residue field of the valuation ring $\O_\lambda$. (The residue map restricts to an isomorphism from $K_\lambda$ to that residue field.)
    \item Set $G_\lambda := G \cap \mathbf H(K_\lambda)$.
\end{itemize}
Moreover, we let $K_\infty \subset K$ be the union of all the $K_\lambda$ (for $\lambda \in \Gamma$ positive). Note that $\mathbf H$ is defined over $K_\infty$, since we assumed it to be defined over some subfield $K' = k((t^{\Gamma'}))$, which is contained in $K_\lambda$ for any positive $\lambda \in \Gamma \setminus \Gamma'$.

While $K_\infty$ can be strictly smaller than $K$, it is always valuation dense in $K$. More precisely, we even have:

{\em Claim 1.} If $\mathbf X$ is any subvariety of $\mathbf H$ defined over $K_\infty$, then $\mathbf X(K_\infty)$ is valuation dense in $\mathbf X(K)$.

{\em Proof of Claim 1.}
We consider $\mathbf X$ as a $K_\infty$-closed subset of $\SL_n$.
As such, $\mathbf X$ is defined by some polynomials $f_i$ over $K_\infty$.  Let $a \in \mathbf X(K)$ and $\lambda \in \Gamma$ be given. We need to show that there exists an $a' \in \mathbf X(K_\infty)$ with $v(a-a') > \lambda$. By making $\lambda$ bigger, we can assume that
all coordinates of $a$ have valuation in $C_\lambda \cup \Gamma_{\ge 0}$ and that the polynomials $f_i$ have coefficients in
$K_\lambda$.

Write $a$ as a power series $\sum_{\gamma\in \Gamma} a_\gamma t^\gamma$, with $a_\gamma \in k^{m}$ and set $a' := \sum_{\gamma\in C_\lambda} a_\gamma t^\gamma$. Clearly, we have $a' \in K_\lambda^m$ and $v(a' - a) > \lambda$, so it remains to verify that $a'$ is a common zero of the polynomials $f_i$.
But this is clear if we identify $K_\lambda$ with the residue field of $\O_\lambda$: If we do so, and if we write $\res_\lambda\colon \O_\lambda \to K_\lambda$ for the corresponding residue map, then $a' = \res_\lambda(a)$, $f_i = \res(f_i)$, and hence $f_i(a') = \res(f_i)(\res(a)) = \res(f_i(a)) = 0$.\qed \emph{(Claim~1)}

We record the following intermediate result of the above proof for further usage:

{\em Claim 1'.} If $\mathbf X \subset \mathbf H$ is defined over $K_\lambda$, then for every $a \in \mathbf X(K)$ with $v(a) \in C_\lambda \cup \Gamma_{\ge 0}$, there exists an $a' \in \mathbf X(K_\lambda)$ with $v(a' - a) > \lambda$.

The main part of the proof of the Theorem consists in showing the following:
\begin{itemize}
    \item[(*)] There are arbitrarily big $\lambda$ such that
$G_\lambda$ contains $\mathbf H(K_\lambda)^+$.
\end{itemize}
(Note that $\mathbf H(K_\lambda)^+$ makes sense, since $\mathbf H$ is defined over $K_\infty$ and hence also over $K_\lambda$ for $\lambda$ big enough.)

Let us already verify that (*) implies the theorem:
By Remark~\ref{rem:unipot-gen}, it suffices to show that $G$ contains $\mathbf U(K)$, where $\mathbf U$ is the set of unipotent matrices in $\mathbf H$. By (*), $G$ contains $\mathbf U(K_\lambda)$ for all sufficiently big $\lambda$ and hence it contains $\mathbf U(K_\infty)$.
Since $G$ is open (and hence closed) in $\mathbf H(K)$ and $\mathbf U(K_\infty)$ is valuation dense in $\mathbf U(K)$ (by Claim~1), we indeed obtain $\mathbf U(K) \subset G$.

It remains to prove (*).
To this end, fix $\lambda \in \Gamma$ positive.
We write $B_\lambda(1)\cap \mathbf{H}(K)$ for the open ball of radius $\lambda$ around the identity of $\mathbf{H}(K)$.
We will always assume that $\lambda$ is so big that $B_\lambda(1)\cap \mathbf{H}(K)$ is contained in $G$ and also that $\mathbf{H}$ is defined over $K_\lambda$.

Our proof proceeds in a series of claims. We first (Claim 2) show that $G_\lambda$ is Zariski dense in $\mathbf{H}(K_\lambda)$. Then (Claim 3, mimicking \cite[Lemma 1]{prasad}) we show that $G_\lambda$ has semisimple elements which in the adjoint representation have eigenvalues of large negative valuation.
A key difference between (*) and the statement of Theorem~\ref{thm:prasadii} is that the valuation on $K_\lambda$ can be coarsened to a rank $1$ valuation. Working in $K_\lambda$ allows us to follow the strategy of \cite[Lemma 2]{prasad} to deduce (Claim 5) that $G_\lambda$ contains certain unipotent subgroups $\mathbf{U}_\Pi(K_\lambda)$ and $\mathbf{U}^-_\Pi(K_\lambda)$ (see definitions below). Using known results, this implies that $G_\lambda$ contains $\mathbf H(K_\lambda)^+$.

{\em Claim 2.}
The set $B := B_\lambda(1) \cap \mathbf H(K_\lambda)$ is Zariski dense in $\mathbf{H}(K_\lambda)$. In particular, for sufficiently big $\lambda$, $G_\lambda$ is Zariski dense in $\mathbf{H}(K_\lambda)$, since $G_\lambda \supset B$.

{\em Proof of Claim 2.}
The first claim follows directly from Lemma~\ref{lou2}, applied in the field $K_\lambda$.
That $G_\lambda$ contains $B$ for big $\lambda$ follows from $G$ being open in $\mathbf{H}(K)$.
\qed \emph{(Claim~2)}

Let $\Ad$ denote the adjoint representation of $\mathbf{H}(\Klamalg)$, viewed as a $K_\lambda$-morphism of $K_\lambda$-groups $\mathbf{H}(\Klamalg) \to \SL(V)$, where $V$ is a $\Klamalg$ vector space.
Since we assumed $\mathbf{H}$ to be centreless (at the very beginning of the proof), $\Ad$ is injective. To see that the image is in $\SL$, see the proof of Lemma~\ref{al-simp}. 

In the following, we use the natural extension of the valuations $v$ and $v_\lambda$ on $K$ to the algebraic closure $K^{\alg}$.

{\em Claim 3.} (i)
There exist arbitrarily large $\lambda \in \Gamma$ such that there is $g \in G_\lambda$ such that $\Ad(g)$ has an eigenvalue $\alpha \in K_\lambda^{\alg}$ with $v_\lambda(\alpha) < 0$.

(ii) The element $g$ in (i) can be chosen to be semisimple.

{\em Proof of Claim 3.}
(i) Fix a flag
\[V=V_0\supset V_1 \supset \dots \supset V_{r+1}=\{0\},\]
of sub-representations $V_i$ such that for each $i=0,\ldots,r$,  the natural representation $\rho_i$ of $\mathbf H(\Klamalg)$ on $W_i:=V_i/V_{i+1}$ is irreducible.
Let $\rho=\bigoplus_{i=0}^r \rho_i$ be the corresponding representation of $\mathbf H(\Klamalg)$ on $\bigoplus_{i=0}^r W_i$. Since $\mathbf H(\Klamalg)$ is reductive and the kernel of $\rho$ is a unipotent normal subgroup of $\mathbf H(\Klamalg)$, $\rho$ is injective: indeed, with respect to a basis of $V$ obtained by iteratively extending a basis of each $V_{i+1}$ to $V_i$, each matrix in the kernel is upper unitriangular. By Remark~\ref{rem:bijmorph}, injectivity of $\rho$ implies that it induces an isomorphism of $\mathbf H(\Klamalg)$ onto its image $\rho(\mathbf H(\Klamalg))$.

Let us temporarily fix $i \le r$. 
Since $\rho_i$ is irreducible,
$\rho_i(\mathbf H(\Klamalg))$ contains a basis $B_i = \{b_{i,1},\dots, b_{i,m_i}\}$ of the $\Klamalg$-algebra ${\rm End}_{\Klamalg}(W_i)$ of $\Klamalg$-vector space endomorphisms of $W_i$ (by Jacobson's Density Theorem; see e.g. Corollary 3.3 in Ch. XVII of \cite{lang}). Since
$\rho_i(G_\lambda)$ is Zariski dense in $\rho_i(\mathbf H(\Klamalg))$ (by Claim~2), this is also true for the $m_i$-th cartesian powers:
$\rho_i(G_\lambda)^{m_i}$ is Zariski dense in $\rho_i(\mathbf H(\Klamalg))^{m_i}$. Since being a basis is a Zariski open condition, we may pick our above basis $B_i$ to be a subset of $\rho_i(G_\lambda)$.
Recall that on ${\rm End}_{\Klamalg}(W_i)$, we have a symmetric non-degenerate bilinear form $(a, a') \mapsto \tr(aa')$ called the trace form; let $B^*_i = \{b^*_{i,1},\dots, b^*_{i,m_i}\}$ be the basis of ${\rm End}_{\Klamalg}(W_i)$ dual to $B_i$ with respect to this trace form.

Now consider any $\lambda' \ge \lambda$ and set $W'_i := W_i \otimes_{\Klamalg} \Klampalg$. Then $\rho_i(G_{\lambda'})$
still contains $B_i$, which we now consider as a $\Klampalg$-basis of 
${\rm End}_{\Klampalg}(W'_i)$, and $B^*_i$ is still its dual basis with respect to the trace form. In particular, for any endomorphism $h \in {\rm End}_{\Klampalg}(W'_i)$, we have $h = \sum_j \tr(hb_{i,j})b_{i,j}^*$. Applying this to each $h \in \rho_i(G_{\lambda'})$ yields 
\[
\rho_i(G_{\lambda'}) \subset \sum_{j=1}^{m_i} \tr(\rho_i(G_{\lambda'}))b^*_{i,j}.\tag{**}
\]
Suppose now that Claim~3 (i) does not hold for $\lambda'$, \emph{i.e.}, that all the eigenvalues $\alpha$ of $\Ad(g)$ for all $g \in G_{\lambda'}$ satisfy $v_{\lambda'}(\alpha) \ge 0$. Since $\Ad(g)$ has the same eigenvalues as $\rho(g)$, we obtain $v_{\lambda'}(\tr(\rho_i(G_{\lambda'}))) \ge 0$.

To get to a contradiction, first consider the case that $C_\lambda = \Gamma$ (and hence $K_\lambda =K_{\lambda'} = K_\infty = K$ and $\O_\lambda^- = \O_{\lambda'}^-$). Since $G$ is an unbounded subgroup of $\mathbf{H}(K^{\alg})$,
$\rho(G)$ is unbounded, too, so there is some $i \le r$ such that $\rho_i(G)$ is unbounded.
In particular, the left hand side of (**) is unbounded, whereas the right hand side is bounded, which is a contradiction. We may therefore now assume that
$C_\lambda$ is strictly contained in $\Gamma$. In particular, we may assume that $\lambda'$ lies outside of $C_\lambda$ (and prove Claim~3 (i) only for such $\lambda'$).
This in particular implies that $\lambda \in C_{\lambda'}^-$ and hence $K_\lambda \subset \O_{\lambda'}^-$.
Moreover, we have
$v_{\lambda'}(b^*_{i,j}) \ge 0$ for all $i$ and $j$, so $v_{\lambda'}$ of the entire sum on the right hand side of (**) is non-negative. To finish the proof of Claim~3, it suffices to show that for arbitrarily large $\lambda'$, we can find a $g \in G_{\lambda'}$ and an $i \le r$ such that $v_{\lambda'}(\rho_i(g)) < 0$.

Since $G$ is unbounded in $\mathbf H(K)$, there exist
$g \in G$ with arbitrarily negative valuation. Pick such $g$ and set $\lambda' := -v(g)$. By Claim~1', there exists a $g' \in \mathbf H(K_{\lambda'})$ such that $v(g - g') > \lambda'$, and since 
$B_\lambda(1) \cap \mathbf H(K) \subset G$, we have $g' \in G$. Thus we may as well assume $g \in G_{\lambda'}$.
By definition of $v_{\lambda'}$, we have $v_{\lambda'}(g) < 0$.
Since $\rho$ is a $K_\lambda^{\alg}$-isomorphism onto its image (by Remark~\ref{rem:bijmorph}) and
all elements $a \in K_\lambda^{\alg}$ satisfy $v_{\lambda'}(a) \ge 0$, we also have $v_{\lambda'}(\rho(g)) < 0$; otherwise, applying $\rho^{-1}$ to $\rho(g)$ would imply $v_{\lambda'}(g) = 0$.
Using that $\rho$ is the direct sum of the $\rho_i$, we deduce that there exists an $i$ for which we have $v_{\lambda'}(\rho_i(g)) < 0$, as desired. This finishes the proof of Claim~3 (i).

To see (ii), recall that a semisimple element $h$ of $\mathbf{H}(\Klamalg)$ is said to be {\em regular} if $h$  lies in a unique maximal torus of $\mathbf{H}(\Klamalg)$.
By Theorem 2.14 of \cite{steinberg}, the set $\mathbf{H^{{\rm rss}}}(\Klamalg)$ of regular semisimple elements of $\mathbf{H}(\Klamalg)$ is dense open in $\mathbf{H}(\Klamalg)$. In particular, the set $\mathbf{H^{{\rm nss}}}(K_\lambda)$ of non-semisimple elements of $\mathbf{H}(K_\lambda)$ is contained in a proper Zariski closed subset of $\mathbf{H}(K_\lambda)$.
It follows that for $B := B_\lambda(1) \cap \mathbf H(K_\lambda) \subset G_\lambda$ as in Claim~2 (and $g$ as obtained from (i)), 
the subset $gB$ of $G_\lambda$ contains a semi-simple element $g'$; indeed, $B$ is Zariski-dense in $\mathbf{H}(K_\lambda)$ by Lemma~\ref{lou2}, and hence so is $gB$.
Using the continuity of $\Ad$, we  may choose $g'$  close to $g$, so that $\Ad(g')$ is sufficiently close to $\Ad(g)$ to ensure 
that the eigenvalues of $\Ad(g)$ and $\Ad(g')$ have the same valuations. (Here we use the well-known fact that the roots of the characteristic polynomial depend continuously on the characteristic polynomial -- see e.g. \cite[Theorem 23]{fvk}.) 
In other words, we may suppose that $g$ is semisimple, completing the proof of the claim.
\qed
\emph{(Claim~3)}

\medskip

For the remainder of the proof, fix $\lambda \in \Gamma$ and $g \in G_\lambda$ as in Claim 3. From now on, we only work in $K_\lambda$ and $K^{\alg}_\lambda$, 
and we only use the coarsened and restricted valuation $v_\lambda \colon K_\lambda \to C_\lambda/C^-_\lambda \cup \{\infty\}$ and its (unique) extension to $K^{\alg}_\lambda$, which we also denote by $v_\lambda$.

Following \cite[Lemma 2]{prasad}, we fix a maximal torus $\mathbf{T}$ of $\mathbf{H}$ defined over $K_\lambda$ and with $g\in \mathbf{T}(K_\lambda)$; such $\mathbf{T}$ exists by Theorem 13.3.6 and Corollary 13.3.8 of \cite{springer}.
We denote by $X(\mathbf T)$ the character group of $\mathbf{T}$ and by $\Phi \subset X(\mathbf T)$ the root system of $\mathbf H$ with respect to $\mathbf{T}$.
One easily verifies that the map $X(\mathbf T) \to C_\lambda/C_\lambda^-$ defined by $\beta \mapsto v_\lambda(\beta(g))$ is a group homomorphism.
In particular, the set $\{\beta \in X(\mathbf T) \mid v_\lambda(\beta(g)) \ge 0\}$ contains a closed halfspace of $X(\mathbf T)$, so there exist a set of positive roots $\Phi^+ \subset \Phi$ such that  for every root $\phi \in \Phi$, we have the implications
\[
v_\lambda(\phi(g)) > 0 \Rightarrow \phi \in \Phi^+ \Rightarrow v_\lambda(\phi(g)) \ge 0
\tag{***}
\]
We let $\Delta \subset \Phi^+$ be the corresponding set of simple roots.

Note also that the above group homomorphism $X(\mathbf T) \to C_\lambda/C_\lambda^-$ does not send everything to $0$, since $\Ad(g)$ has an eigenvalue $\alpha$ with $v_\lambda(\alpha) < 0$. This implies that there exists a simple root $\delta \in \Delta$ with $v_\lambda(\delta(g)) \ne 0$. In particular, the set 
\[
\Pi := \{\delta \in\Delta \mid v_\lambda(\delta(g))  = 0\}.
\]
is a proper subset of $\Delta$, so we obtain a proper parabolic subgroup $\mathbf{P}_\Pi$ of $\mathbf H$, with unipotent radical $\mathbf{U}_\Pi$. Recall that for each $\phi\in \Phi$ there is a unique Zariski closed subgroup $\mathbf{U}_\phi$ of $\mathbf{H}$ (the {\em root subgroup} corresponding to $\phi$)  and an isomorphism
$\epsilon_\phi:\mathbf{G}_a \to \mathbf{U}_\phi$ such that for each $t\in \mathbf{T}$ and $x\in K_\lambda^{\alg}$ we have $t\epsilon_\phi(x)t^{-1}=\epsilon_\phi(\phi(t)x)$.
Furthermore, by for example Proposition 8.4.3(ii) of \cite{springer}, for $\phi\in \Phi$ we have $\mathbf{U}_\phi\leq \mathbf{U}_\Pi$ if and only is $\phi$ is positive and $\phi\not\in \langle \Pi\rangle$. 

{\em Claim 4.} For $\phi\in \Phi$, we have 
$\mathbf{U}_\phi\leq \mathbf{U}_\Pi \Leftrightarrow v_\lambda(\phi(g)) > 0$.

{\em Proof of Claim 4.} Suppose first $v_\lambda(\phi(g))>0$. Then by (***), $\phi\in \Phi^+$, that is, $\phi$ is positive. Furthermore if $\phi \in \langle \Pi \rangle$ with say $\phi=\Sigma_{i\in I} \delta_i$ where the $\delta_i$ lie in $\Pi$, then
$$v_\lambda(\phi(g))=v_\lambda((\Sigma_{i\in I} \delta_i)(g))=v_\lambda(\Pi_{i\in I} (\delta_i(g)))=\Sigma_{i\in I}v_\lambda(\delta_i(g))=0,$$
(since $v_\lambda(\delta_i(g))=0$ for $\delta_i\in \Pi$), a contradiction. Thus $\phi\in \Phi^+$ and $\phi\not\in \langle \Pi\rangle$, so $\mathbf{U}_\phi\leq \mathbf{U}_\Pi$. 

Conversely, suppose that $\mathbf{U}_\phi\leq \mathbf{U}_\Pi$. Then $\phi\in \Phi^+$ as noted above, so by (***) we have $v_\lambda(\phi(g))\geq 0$. Suppose for a contradiction that $v_\lambda(\phi(g))=0,$ and write $\phi = \sum_i \delta_i$ as a sum of simple roots
$\delta_i \in \Delta$. Since all those $\delta_i$ satisfy $v_\lambda(\delta_i(g)) \ge 0$ (by (***)), the only way to obtain $v_\lambda(\phi(g)) = 0$ is that all $\delta_i$ already satisfy
$v_\lambda(\delta_i(g)) = 0$. In other words, $\phi \in \langle \Pi\rangle$, contradicting $\mathbf{U}_\phi\leq \mathbf{U}_\Pi$.
\qed \emph{(Claim~4)}

Since $\mathbf{T}$ is a $K_\lambda$-group and since 
the set $\{\phi\in \Phi \mid v_\lambda(\phi(g)) > 0\}$ is ${\rm Gal}(\Klamalg/K_\lambda)$-invariant, we deduce that 
$\mathbf{U}_\Pi$ is a $K_\lambda$-group.

{\em Claim 5.} The group $G_\lambda$ contains
$\mathbf{U}_{\Pi}(K_\lambda)$ and $\mathbf{U}^-_{\Pi}(K_\lambda)$.

{\em Proof of Claim 5.}
We prove that $G_\lambda$ contains $\mathbf{U}_{\Pi}(K_\lambda)$, the proof for $\mathbf{U}^-_{\Pi}(K_\lambda)$ being analogous.
To this end, it suffices to verify that $G_\lambda$ contains
$\mathbf{U}_{\phi}(K_\lambda)$
for each $\phi \in \Phi^+ \setminus \langle \Pi \rangle$, so fix such a root $\phi$,
and also fix an isomorphism $\epsilon_\phi\colon \mathbf{G}_a \to \mathbf{U}_\phi$ defined over $K_\lambda$, with $ t \epsilon_\phi(x)t^{-1}=\epsilon_\phi(\phi(t)x)$ for all $t\in \mathbf{T}$.
Since $\epsilon_\phi$ is continuous with respect to the valuation topology, there exists a $\lambda' \in C_\lambda/C^-_\lambda$ such that the image of the valuative ball $B' = B_{\lambda'}(0) \subset K_\lambda$ under $\epsilon_\phi$ lies in $B_\lambda(1) \cap \mathbf{U}_\phi(K_\lambda)$. 

Since $v_\lambda(\phi(g)) > 0$ (by our choice of $\phi$, and Claim 4) and since the value group $C_\lambda/C^-_\lambda$ is archimedean,
for any $a \in K_\lambda$, there exists an integer $n$ such that
$nv_\lambda(\phi(g)) + v_\lambda(a) > \lambda'$ and hence $\phi(g)^na \in B'$. 
Thus $g^n\epsilon_\phi(a)g^{-n} = \epsilon_\phi(\phi(g)^na) \in B_\lambda(1) \subset G_\lambda$, which implies $\epsilon_\phi(a) \in G_\lambda$. Since each element of $\mathbf{U}_{\phi}(K_\lambda)$ is of this form, we just proved $\mathbf{U}_{\phi}(K_\lambda) \subset G_\lambda$.
\qed \emph{(Claim~5)}

By \cite[Proposition 6.2(v)]{bortit},  $\mathbf{H}(K_\lambda)^+$
is generated by $\mathbf{U}_\Pi(K_\lambda)$ and $\mathbf{U}^-_\Pi(K_\lambda)$, so Claim~5 implies $\mathbf{H}(K_\lambda)^+ \subset G_\lambda$. This finishes the proof of (*) and hence also of the theorem.
\end{proof}

\subsection{Non-simplicity of bounded subgroups}

Our goal in this subsection is Proposition~\ref{prop:main}, which, in combination with Theorem~\ref{thm:prasadii} (or Proposition~\ref{prop:prasadiii}, in the positive characteristic case), is critical to the proof of our main theorem: It yields the unboundedness assumption that is needed to apply Theorem~\ref{thm:prasadii}.

\begin{proposition} \label{prop:main}
Suppose that $(K,v,\ldots)$ is a 1-$h$-minimal expansion of a henselian valued field of characteristic 0, or is a pure algebraically closed valued field of characteristic $p$. Let
 $G$ be an infinite  definable and definably almost simple subgroup of $\SL_n(K)$. Then $G$ is not bounded.
\end{proposition}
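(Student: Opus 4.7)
The plan is to assume $G$ is bounded and derive a contradiction by constructing, for every sufficiently large $\eta\in\Gamma$, a definable infinite normal subgroup $N_\eta$ of $G$ such that $\bigcap_{\eta>0}N_\eta=\{e\}$. Definable almost simplicity will then force each $N_\eta$ to equal $G$, giving $G=\{e\}$ and contradicting the hypothesis that $G$ is infinite. Since $G$ is bounded, fix $\gamma\in\Gamma$ such that every $g\in G$ (hence also $g^{-1}\in G$) has all matrix entries of valuation at least $\gamma$; note that $\gamma$ may be negative.

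First I would use Lemma~\ref{l.locdim} to pick $x_0\in G$ with $\dim(G\cap B)=\dim G$ for every valuation ball $B\subseteq K^{n^2}$ containing $x_0$. Since $G$ is infinite, Proposition~\ref{oldhyp}(i) yields $\dim G\geq 1$, so each such $G\cap B$ is infinite. Left-translation $L_{x_0^{-1}}$ is a definable bijection $G\to G$ preserving dimension (Proposition~\ref{oldhyp}(ii)), and the estimate $v(x_0(z-I))\geq\gamma+v(z-I)$ (together with its analogue for $x_0^{-1}$) gives an inclusion $L_{x_0^{-1}}(B_\eta(x_0))\subseteq B_{\eta+\gamma}(e)$. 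This transfers the local dimension property to the identity: every valuation ball around $e$ in $K^{n^2}$ meets $G$ in an infinite set.

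Next, for $\eta>0$ set $V_\eta:=\{A\in\SL_n(K):v(A_{ij}-\delta_{ij})>\eta\text{ for all }i,j\}$. Expanding $AB-I=(A-I)+(B-I)+(A-I)(B-I)$ and applying Cramer's rule to the inverse (using $\det A=1$) shows that $V_\eta$ is a definable subgroup of $\SL_n(K)$, with $\bigcap_{\eta>0}V_\eta=\{I\}$. The group $V_\eta$ is in general \emph{not} normalized by $G$ when $\gamma<0$, which is the obstacle to intersecting with $G$ directly. To repair this, I would replace $V_\eta$ by its $G$-conjugation invariant hull
\[
D_\eta:=\bigcap_{B\in G}B^{-1}V_\eta B=\{A\in\SL_n(K):BAB^{-1}\in V_\eta\text{ for every }B\in G\},
\]
a definable subgroup of $\SL_n(K)$ that is automatically normalized by $G$. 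The key valuation estimate is
\[
v(BAB^{-1}-I)=v(B(A-I)B^{-1})\geq v(A-I)+2\gamma\qquad(B\in G),
\]
which yields the inclusion $V_{\eta-2\gamma}\subseteq D_\eta$, so $D_\eta$ contains an open ball around $e$.

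Setting $N_\eta:=G\cap D_\eta$, we obtain a definable normal subgroup of $G$ containing the infinite set $G\cap V_{\eta-2\gamma}$ by the local dimension step. Definable almost simplicity then forces $N_\eta=G$, i.e.\ $G\subseteq D_\eta$, for every $\eta>0$; taking $B=I$ in the intersection defining $D_\eta$ gives $D_\eta\subseteq V_\eta$, so $G\subseteq\bigcap_{\eta>0}V_\eta=\{I\}$, which contradicts $|G|=\infty$. The main obstacle, and the essence of the ``variant $\nSL_n(\eta)$ of $\SL_n(\mathcal{O})$'' alluded to in the introduction, is exactly this mismatch between the subgroup property of $V_\eta$ (which needs $\eta>0$) and the normalization estimate (which shifts by the possibly negative quantity $2\gamma$); the $G$-conjugation hull $D_\eta$ reconciles them, and the argument proceeds uniformly in both the $1$-$h$-minimal and $\ACVF_p$ settings without ever having to conjugate $G$ into $\SL_n(\mathcal{O})$, thereby avoiding Bruhat--Tits-type machinery that would be delicate over a general Krull valuation.
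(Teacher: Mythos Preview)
Your argument is correct and takes a genuinely different route from the paper's. Both proofs exploit a congruence-type filtration with trivial intersection, but the mechanism for obtaining normality in $G$ differs.

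The paper first builds a $G$-invariant ``valuation'' $\eta$ on $V=K^n$ with values in the set $\Gamma^c$ of upwards-closed subsets of $\Gamma$ (by averaging the standard valuation over the $G$-action), and then works inside the group $\nSL_n(\eta)$ of $\eta$-preserving matrices, whose congruence subgroups $\nSL_n(\eta,\delta)$ are automatically normal. The technical work is then Lemma~\ref{lem:bounded-open}, showing that $\nSL_n(\eta,\delta_0)$ is open in $\nSL_n(\eta)$; this requires a tangent-space/dimension argument in characteristic~$0$ and a separate Zariski-closure argument in $\ACVF_p$, together with an auxiliary claim bounding the number of convex subgroups $C_u$ arising.

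You instead keep the standard congruence subgroups $V_\eta\subset\SL_n(K)$ and fix the lack of $G$-normality by passing to the conjugation hull $D_\eta=\bigcap_{B\in G}B^{-1}V_\eta B$. The single estimate $v(B(A-I)B^{-1})\ge v(A-I)+2\gamma$ (using only that $G$ is bounded) gives $V_{\eta-2\gamma}\subseteq D_\eta$, and then the local-dimension transfer to $e$ makes $N_\eta=G\cap D_\eta$ infinite. This is shorter, avoids the $\Gamma^c$-valued valuation and Lemma~\ref{lem:bounded-open} entirely, and works uniformly in characteristic~$0$ and in $\ACVF_p$. What the paper's construction buys is a canonical $G$-invariant ``lattice-like'' object, closer in spirit to a Bruhat--Tits fixed point; your approach trades that conceptual picture for a direct matrix computation.

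One cosmetic point: the displayed estimate ``$v(x_0(z-I))\ge\gamma+v(z-I)$'' should read $v\bigl(x_0^{-1}(z-x_0)\bigr)\ge\gamma+v(z-x_0)$, matching the inclusion $L_{x_0^{-1}}(B_\eta(x_0))\subseteq B_{\eta+\gamma}(e)$ you state; the conclusion is unaffected.
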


The proof of this proposition grew out of an argument with Bruhat-Tits buildings. While formally Bruhat-Tits buildings do not appear anymore in the proof, this is still the idea behind it. As an example of the basic idea, the group $\SL_2(\mathbb{Q}_p)$ acts on a $(p+1)$-degree tree (see Chapter II of \cite{serre-trees}), and any bounded subgroup $G < \SL_2(\mathbb{Q}_p)$ fixes a vertex and so is conjugate to a subgroup of $\SL_2(\mathbb{Z}_p)$. Using that
$\SL_2(\mathbb{Z}_p)$ has a chain of normal subgroups with trivial intersection (arising from congruence subgroups), one deduces that $G$ cannot be definably almost simple.

\begin{proof}[Proof of Proposition~\ref{prop:main}]
We suppose for a contradiction that $G$ is bounded.

Set $V=K^n$, and let $\Gamma^c$ be the set of upwards-closed subsets of $\Gamma$. Observe that $\Gamma^c$ is an ordered set (with $\lambda \le \lambda'$ iff $\lambda' \subset \lambda$, and where $\infty^c := \emptyset \in \Gamma^c$ is the maximum), and that $\Gamma$ has an order-preserving action on $\Gamma^c$ by addition. We denote that action by $+\colon \Gamma \times \Gamma^c \to \Gamma^c$, and we additionally set $\infty + \lambda := \infty^c$ for any $\lambda \in \Gamma^c$.

We define a {\em valuation} on $V$ (with values in $\Gamma^c$) to be a map 
$\eta:V \to \Gamma^c$ satisfying the following conditions for all $u,w\in V$ and $x\in K$.

\begin{enumerate}
\item[(a)] $\eta(u)=\infty^c$ if and only if $u=0$,
\item[(b)] $\eta(xu)=v(x)+\eta(u)$,
\item[(c)] $\eta(u+w)\geq\min\{\eta(u),\eta(w)\}$.
\end{enumerate}
(Here the addition in the second item is the above action of $\Gamma$ on $\Gamma^c$.)

Observe that for each $u\in V$ we have $\eta(u)=\eta(-u)$, and  that if $\eta(u_1)<\min\{\eta(u_2),\ldots,\eta(u_k)\}$ then $\eta(u_1+\ldots+u_k)=\eta(u_1)$. Indeed, to see the latter, suppose for a contradiction that $\eta(u_1+\ldots+u_k)>\eta(u_1)$, and let $w=u_2+\ldots+u_k$. Then
$$\eta(u_1)=\eta(u_1+w-w)\geq\min\{\eta(u_1+w),\eta(w)\}>\eta(u_1), \mbox{~~a contradiction.}$$

Let $\{e_1,\ldots,e_n\}$ be the standard basis of $V=K^n$. Define
the ``standard valuation'' $\eta_0$ on $V$ as $\eta_0(\sum_{i=1}^n a_ie_i) := \{\lambda \in \Gamma : \lambda \ge \min \{v(a_i):1\leq i \leq n\}\}$. We leave it to the reader to verify that $\eta_0$ satisfies the above axioms. (Note that it sends the zero vector to the empty set $\infty^c \in \Gamma^c$.)

Define $\eta\colon V \to \Gamma^c$ by $\eta(x) := \bigcup_{g \in G}\eta_0(g(x))$.
It is easily checked that $\eta$, too, is a valuation. Indeed, part (a) of the definition is clear.
To see (b), we have
$$\eta(xu)=\bigcup_{g\in G}\eta_0(g(xu))=\bigcup_{g\in G}\eta_0(xg(u))$$
$$=\bigcup_{g\in G}(v(x)+\eta_0(g(u))=v(x)+\bigcup_{g\in G} \eta_0(g(u))=v(x)+\eta(u).$$ To see (c), we have
$$\eta(u+w)=\bigcup_{g\in G}\eta_0(g(u+w))=\bigcup_{g\in G}\eta_0(g(u)+g(w))$$
$$\geq \bigcup_{g\in G}\min\{\eta_0(g(u)), \eta_0(g(w))\}=
\min\{\bigcup_{g\in G}\eta_0(g(u)), \bigcup_{g\in G}\eta_0(g(w))\}=\min\{\eta(u), \eta(w)\}.$$

Let \[\nSL_n(\eta) := \{g \in \SL_n(K) :
\forall u \in V( \eta(u) = \eta(gu))\}.\]
It is easily checked that $\nSL_n(\eta)$ is a group, and is definable. Furthermore, by the definition of $\eta$ we have $G\leq \nSL_n(\eta)$.

For $\delta\in\Gamma$ with $\delta\geq 0$, put
\[\nSL_n(\eta,\delta) := \{g \in \nSL_n(\eta):
\forall u \in V (\eta(u - gu) \ge  \delta + \eta(u))
\}.\]
Then   $\nSL_n(\eta,0)  =\nSL_n(\eta)$, and 
$\nSL_n(\eta,\delta)$  is a definable normal subgroup of $\nSL_n(\eta)$. Indeed, definability and closure under inverse is clear, and we have
(for $g,h\in \nSL_n(\eta,\delta)$) that
$$\eta(u-ghu)=\eta((u-hu)+(hu-ghu))\geq \min\{\eta(u-hu),\eta(hu-g(hu))\}$$
$$\geq \min\{\delta+\eta(u),\delta+\eta(hu))\}=\delta+\eta(u);$$
likewise, for $g\in \nSL_n(\eta,\delta)$ and $h\in \nSL_n(\eta)$ and $u\in V$ we have
$$\eta(u-h^{-1}ghu)=\eta(hu-g(hu))\geq \delta+\eta(hu)=\delta+\eta(u).$$

The intersection $\bigcap_{\delta \ge 0}\nSL_n(\eta,\delta)$ is trivial; indeed, for any $g \in \SL_n(K) \setminus \{I\}$ (where $I$ is the identity matrix), there exists an $x \in V$ with $gx \ne x$. Using boundedness of $G$, we know that $\eta(x)$ is not equal to all of $\Gamma$. Using that orbits of $\Gamma$ on $\Gamma^c$ are cofinal we deduce that there exists a $\delta$ such that $\eta(x - gx) < \delta+\eta(x)$, so $g\not\in \nSL_n(\eta,\delta)$.

Since $\bigcap_{\delta \ge 0}\nSL_n(\eta,\delta)$ is trivial, there exists a $\delta_0$ such that $N := G \cap \nSL_n(\eta,\delta_0)$ is not equal to $G$. Since $G$ is definably almost simple and $N$ is a definable normal subgroup of $G$, the group $N$ has to be finite.

By Lemma~\ref{lem:bounded-open} below (to be proved), the group
$\nSL_n(\eta,\delta_0)$ contains an open neighbourhood of the identity matrix $I$ in $\nSL_n(\eta)$. Hence,  since $N$ is finite we obtain that $G$ is a discrete subgroup of $\nSL_n(\eta)$. 
It follows by Lemma~\ref{l.locdim} that $\dim{G}=0$, and hence by Proposition~\ref{oldhyp}(i) that $G$ is finite, a contradiction.
\end{proof}

\begin{lemma} \label{lem:bounded-open}
Under the assumptions of Proposition~\ref{prop:main}, and  the assumption for contradiction that $G$ is bounded,
the group $\nSL_n(\eta,\delta_0)$ contains an open neighbourhood of the identity  $I$ of $\nSL_n(\eta)$.
\end{lemma}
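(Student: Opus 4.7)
The plan is a direct valuation-theoretic computation that uses only the boundedness of $G$; neither $1$-h-minimality nor the algebraically closed hypothesis enters. Since $G$ is bounded in $K^{n^2}$, fix $\gamma_0 \in \Gamma$ (which one may take to be non-positive) such that $v(g_{ij}) > \gamma_0$ for every $g \in G$ and all $i, j$. I will show that for any $\mu \in \Gamma$ with $\mu \geq \delta_0 - \gamma_0$, the set
\[
U := \{g \in \nSL_n(\eta) : v(g_{ij} - \delta_{ij}) > \mu \text{ for all } i, j\}
\]
is an open neighborhood of $I$ in $\nSL_n(\eta)$ and is contained in $\nSL_n(\eta,\delta_0)$.

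Write $v_0(u) := \min_i v(u_i)$ for $u = (u_1, \dots, u_n) \in V$. The argument rests on two elementary ultrametric estimates. First, for $g \in U$ and any $u \in V$, expanding $(u - gu)_i = -\sum_j (g_{ij} - \delta_{ij})\, u_j$ entrywise yields $v_0(u - gu) > \mu + v_0(u)$. Second, for $h \in G$ and any $w \in V$, the uniform bound $v(h_{ij}) > \gamma_0$ likewise gives $v_0(hw) > \gamma_0 + v_0(w)$. Combining these, and using $\mu \geq \delta_0 - \gamma_0$, one obtains uniformly in $h \in G$ the estimate
\[
v_0(h(u - gu)) > \gamma_0 + \mu + v_0(u) \geq \delta_0 + v_0(u).
\]

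To translate this into the required bound $\eta(u - gu) \geq \delta_0 + \eta(u)$ in $\Gamma^c$ (equivalently, $\eta(u - gu) \subseteq \delta_0 + \eta(u)$ as upwards-closed subsets of $\Gamma$), I would use that $I \in G$, so that $\eta(u) \supseteq \eta_0(u) = \Gamma_{\geq v_0(u)}$. Any $\lambda \in \eta(u - gu) = \bigcup_{h \in G}\eta_0(h(u - gu))$ lies in some $\eta_0(h(u - gu))$, whence $\lambda \geq v_0(h(u - gu)) > \delta_0 + v_0(u)$ by the previous step. Therefore $\lambda - \delta_0 > v_0(u)$, so $\lambda - \delta_0 \in \Gamma_{\geq v_0(u)} = \eta_0(u) \subseteq \eta(u)$, giving $\lambda \in \delta_0 + \eta(u)$ as required. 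Since $U \subseteq \nSL_n(\eta)$ by definition, this yields $U \subseteq \nSL_n(\eta, \delta_0)$.

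There is no serious obstacle here. The only mildly delicate aspect is the bookkeeping between genuine elements of $\Gamma$ (produced by $v_0$) and upwards-closed subsets of $\Gamma$ (the values of $\eta$ and $\eta_0$); this is handled cleanly by the inclusion $\eta_0(u) \subseteq \eta(u)$, which lets one bound $\lambda - \delta_0$ against the single element $v_0(u) \in \Gamma$ rather than against the entire set $\eta(u)$.
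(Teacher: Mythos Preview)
Your proof is correct and is genuinely more elementary than the paper's. The paper splits into two cases. In characteristic~$0$ it invokes the dwsd Lie group machinery (Lemma~\ref{opensubgroup}(ii)) to reduce to showing $\dim \nSL_n(\eta) = \dim \nSL_n(\eta,\delta_0)$, which it then establishes via a clever injection $\phi(g) = a(g-I) + I$ for suitable $a \in K$; this in turn requires a pigeonhole argument on the convex subgroups $C_u = \{\mu : \mu + \eta(u) = \eta(u)\}$ to find $a$ with $v(a) + \eta(u) > \eta(u)$ for all $u$. In characteristic~$p$ the tangent-space step is unavailable, so after the same dimension computation the paper passes through Zariski closures and Lemma~\ref{lou} to conclude.

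Your argument bypasses all of this: a direct entrywise ultrametric estimate, plus the observation $\eta_0(u) \subseteq \eta(u)$ coming from $I \in G$, shows that a concrete valuation ball around $I$ inside $\nSL_n(\eta)$ already lies in $\nSL_n(\eta,\delta_0)$. This works uniformly in both cases and uses only boundedness of $G$, as you note. What the paper's approach buys is the stronger conclusion that $\nSL_n(\eta,\delta_0)$ has the \emph{same dimension} as $\nSL_n(\eta)$ (hence is itself open), though this is not needed for the application in Proposition~\ref{prop:main}. Your approach buys simplicity and robustness, avoiding any appeal to the differentiable or Zariski-topological machinery.
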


\begin{proof} We first consider the case when $(K,v,\ldots)$  is a 1-$h$-minimal expansion of characteristic 0.  In this case, by Lemma~\ref{opensubgroup}(ii), it suffices to show that the tangent spaces of $\nSL_n(\eta)$ and $\nSL_n(\eta,\delta_0)$ are equal, and for this, it suffices to show that they have the same dimensions. Since the dimensions of the tangent spaces equal the dimensions of the corresponding groups, it suffices to show that $\nSL_n(\eta)$ and $\nSL_n(\eta,\delta_0)$ have the same dimension.

\begin{claim} There is $\delta_1>\delta_0$ such that for all $u\in V$ we have $\eta(u)+\delta_1>\eta(u)$.
\end{claim}

{\em Proof of Claim.}
For each $u\in V$, define $C_u:=\{\mu\in \Gamma:\mu+\eta(u)=\eta(u)\}$. Then $C_u$ is a convex subgroup of $\Gamma$, and since $G$ is bounded, $C_u\neq \Gamma$. To prove the claim, it suffices to show that there are at most $n=\dim(V)$ distinct $C_u$, for then we may choose $\delta_1>\delta_0$ outside $\bigcup_{u\in V} C_u$.

So suppose for a contradiction that $C_{u_1},\ldots, C_{u_{n+1}}$ are all distinct, where $u_1,\ldots,u_{n+1}\in V$. There are $a_1,\ldots,a_{n+1}\in K$, not all zero, such that $\sum_{i=1}^{n+1} a_iu_i=0$. Removing some terms if necessary, we may suppose that $a_i\neq 0$ for all $i$, so $C_{u_i}=C_{a_iu_i}$ for each $i$.
Thus, the $C_{a_iu_i}$ are all distinct, so the $\eta(a_iu_i)$ are all distinct. Hence, by the initial remarks on valuations in $\Gamma^c$, it follows that $\eta(\sum_{i=1}^{n+1} a_iu_i)=\min_{i=1}^{n+1} \eta(a_iu_i)\neq \infty^c$, contradicting that $\sum_{i=1}^{n+1} a_iu_i=0$.
\qed \emph{(Claim)}

Now fix $a\in K$ with $v(a)=\delta_1$. Let $M_n(K)$ be the set of $n\times n$ matrices over $K$. We define a map $\phi:\nSL_n(\eta)\to M_n(K)$ by putting
$\phi(g)=a(g-I)+I$. Since $\phi$ is injective, it remains to show that the range of $\phi$ lies in $\nSL_n(\eta, \delta_0)$ to deduce (as desired) that $\nSL_n(\eta, \delta_0)$ has the same dimension as $\nSL_n(\eta)$.

So fix $g\in \nSL_n(\eta)$ and put $h=\phi(g)$. We start by verifying that $h \in \nSL_n(\eta)$. To this end,
let $u\in V$. Observe that as $\eta(gu)=\eta(u)$, we have $\eta(gu-u)\geq \eta(u)$. Thus, we have
$$\eta(a(gu-u))=\delta_1+\eta(gu-u)\geq \delta_1+\eta(u)>\eta(u)$$
(where the last inequality holds by the choice of $\delta_1$ using the Claim). Hence,
$$\eta(hu)=\eta(a(gu-u)+u)=\min\{\eta(a(gu-u)),\eta(u)\}=\eta(u),$$
so $h\in \nSL_n(\eta).$ Furthermore,
$$\eta(hu-u)=\eta(a(gu-u)+u-u)=\eta(a(gu-u))
=\delta_1+\eta(gu-u)\geq \eta(u)+\delta_1,$$
so as desired, we obtain $h\in \nSL_n(\eta,\delta_1) \subset \nSL_n(\eta, \delta_0).$

Now consider the case when $K$ is a pure algebraically closed valued field of characteristic $p$. In this case we can no longer appeal to the arguments involving tangent spaces,
but the argument above still yields that $\nSL_n(\eta)$ and $\nSL_n(\eta,\delta_0)$ are definable groups of the same dimension. Let $\mathbf H_1 = \cl_{K^{\alg}}(\nSL_n(\eta,\delta_0))$ and $\mathbf H_2 = \cl_{K^{\alg}}(\nSL_n(\eta))$ be their  Zariski closures in $\SL_n$. Then $\mathbf H_1\leq \mathbf H_2$, and using Lemma~\ref{lou}(ii),  $\mathbf H_1, \mathbf H_2$ have the same Zariski dimension. Therefore, $|\mathbf H_2:\mathbf H_1|$ is finite.  Also, again by Lemma~\ref{lou}(ii), $\dim(\nSL_n(\eta,\delta_0))=\dim(\mathbf{H}_1(K))$.  Hence, by Lemma~\ref{interior},
$\nSL_n(\eta,\delta_0)$ has non-empty interior in $\mathbf H_1(K)$, so
 contains an open neighbourhood $U_1$ of the identity $I$ in $\mathbf H_1(K)$.
This $U_1$ is also an open neighbourhood of $I$ in $\mathbf H_2(K)$
and hence also in $\nSL_n(\eta)$.
\end{proof}

\section{Proof of Theorem~\ref{main}.}

In this section we prove Theorem~\ref{main}. The proof makes essential use of arguments from Section~\ref{sec:tan-Lie} (about Lie algebras associated to definable groups) and
classical results about simple Lie algebras over arbitrary fields, along with results from Section \ref{sec:bounded}. 
The Lie algebra approach was suggested by an argument used in the o-minimal case in \cite{pps}. 
A key issue is that in the proof of Theorem~\ref{main} we would like to take the Zariski closure of $G$ and apply Lemma~\ref{lou}(ii). However the latter requires $L_{\divv}$-definability of $G$ and is not available  in our broader  1-$h$-minimal setting. We circumvent this problem via the next result.

In the entire section, when $K$ is a field and $\mathbf H$ is a linear algebraic $K$-group, we will consider $\mathbf H$ as living in $K^{\alg}$ (\emph{i.e.}, $\mathbf H = \mathbf H(K^{\alg})$).

\begin{proposition}\label{jacobson}
Let $(K,v,\ldots)$ be a 1-$h$-minimal expansion of a henselian valued field of characteristic 0, and let $G\subseteq K^m $ be a non-abelian definable definably almost simple group with $\dim(G)=r$. Then  there is a homomorphism $\rho: G\to \SL_r(K)$ whose kernel is the (finite) center $Z(G)$ of $G$ and such that $r=\dim \rho(G) = \dim \cl_K(\rho(G))$.
\end{proposition}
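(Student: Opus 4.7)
The plan is to invoke Proposition~\ref{linearityproof} to obtain the homomorphism $\rho$, and then to sandwich $\cl_K(\rho(G))$ between $\rho(G)$ and the $K$-points of the algebraic automorphism group of the tangent Lie algebra, both of which will turn out to have dimension $r$.

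First I would apply Proposition~\ref{linearityproof} to the definable, definably almost simple, non-abelian group $G$. This yields a definable homomorphism $\rho\colon G \to \SL_d(K)$ whose kernel is the finite centre $Z(G)$, and such that the tangent space $\mathfrak{g} := T_e(G)$ carries the structure of a simple Lie algebra over $K$ on which $\rho(G)$ acts by Lie algebra automorphisms. Moreover $d = \dim_K \mathfrak{g} = \dim G = r$, so $\rho\colon G \to \SL_r(K)$ as required, and $\rho(G) \subseteq \Aut(\mathfrak{g})(K)$.

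Next I would compute $\dim \rho(G)$. Since $Z(G)$ is finite, every fibre of $\rho\colon G \to \rho(G)$ has dimension $0$, so Proposition~\ref{oldhyp}(ii) gives $\dim \rho(G) = \dim G = r$. Since $\rho(G) \subseteq \cl_K(\rho(G))$, Proposition~\ref{oldhyp}(iii) then yields $r \le \dim \cl_K(\rho(G))$.

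For the reverse inequality, the key point is that $\Aut(\mathfrak{g})$ is a linear algebraic $K$-group: it is cut out inside $\GL(\mathfrak{g})$ by polynomial equations over $K$ expressing compatibility with the Lie bracket. Since $\mathfrak{g}$ is a finite-dimensional non-abelian simple Lie algebra over a field of characteristic $0$, its Killing form is non-degenerate, so every derivation of $\mathfrak{g}$ is inner and $\mathfrak{g}$ has trivial centre. Hence the Lie algebra of $\Aut(\mathfrak{g})$ is $\mathrm{Der}(\mathfrak{g}) = \mathrm{ad}(\mathfrak{g}) \cong \mathfrak{g}$, giving $\zardim \Aut(\mathfrak{g}) = r$. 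Since $\cl_K(\rho(G))$ is an $L_{\divv}$-definable subset of $\Aut(\mathfrak{g})(K)$, combining Proposition~\ref{oldhyp}(iii) with Lemma~\ref{lou}(ii) gives
\[
\dim \cl_K(\rho(G)) \le \dim \Aut(\mathfrak{g})(K) = \zardim \Aut(\mathfrak{g}) = r,
\]
which together with the previous paragraph yields $\dim \rho(G) = \dim \cl_K(\rho(G)) = r$.

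The main subtlety, which motivates the whole statement, is that $\rho(G)$ itself is only definable in the expanded $1$-$h$-minimal language, so one cannot apply Lemma~\ref{lou}(ii) directly to $\rho(G)$ to identify its topological dimension with the Zariski dimension of its Zariski closure. The trick is that $\cl_K(\rho(G))$ \emph{is} $L_{\divv}$-definable, and the algebraic upper bound from $\Aut(\mathfrak{g})$ combined with the lower bound from the finite-kernel dimension computation forces equality without ever taking a Zariski closure of a set definable only in the richer language.
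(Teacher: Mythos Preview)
Your argument is correct and reaches the same conclusion as the paper, but by a more direct route. The paper introduces the \emph{centroid} $F$ of $\mathfrak{g}$ (a finite extension of $K$ over which $\mathfrak{g}$ becomes central simple), invokes Jacobson's result that $\Aut_K(\mathfrak{g})$ is $F$-semilinear so that $\Aut_F(\mathfrak{g})$ has finite index, uses definable almost simplicity to force $G \le \Aut_F(\mathfrak{g})$, and then bounds $\dim \Aut_F(\mathfrak{g})$ via Noether normalisation and the fact that $\mathfrak{g}\otimes_F K^{\alg}$ stays simple. Your observation that semisimplicity alone is preserved under base change to $K^{\alg}$, so that $\mathrm{Lie}(\Aut(\mathfrak{g})) = \mathrm{Der}(\mathfrak{g}\otimes_K K^{\alg}) \cong \mathfrak{g}\otimes_K K^{\alg}$ and hence $\zardim \Aut(\mathfrak{g}) = r$, bypasses all of this. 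One small point of care: Lemma~\ref{lou}(ii) only gives $\dim \Aut(\mathfrak{g})(K) = \zardim \cl_{K^{\alg}}(\Aut(\mathfrak{g})(K)) \le \zardim \Aut(\mathfrak{g}) = r$, not equality a priori; but this inequality is all you need, and equality then drops out of the sandwich with the lower bound $r = \dim \rho(G)$ you already established. The paper's detour through the centroid buys extra structural information about $\Aut(\mathfrak{g})$ that is not actually used later in the proof of the main theorem, so your shortcut loses nothing.
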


\begin{proof} After replacing $G$ by $G/Z(G)$, we may assume by Propositions~\ref{p.Gman} and \ref{linearityproof} (using the adjoint representation) that $G \leq \SL_r(K)$ (where $r = \dim G$). Furthermore, via this representation, $G$ acts faithfully as a group of automorphisms of a simple Lie algebra $\mathfrak{g}$ of dimension $r$ over $K$. We must show
$\dim(G) = \dim\cl_K(G)$.

We  use some basic theory concerning simple Lie algebras over arbitrary fields, and their automorphism groups, taken from Chapters IX and X of Jacobson \cite{jacobson}.
First, using that $\mathfrak{g}$ is simple, there is a canonical finite extension field $F$ of $K$, called the \emph{centroid} of $\mathfrak{g}$, with the property that $\mathfrak{g}$ has the structure of a Lie algebra over $F$, and for every extension field $F'$ of $F$, the Lie algebra $ \mathfrak{g}\otimes_F F'$ (over $F'$) is simple. To construct $F$, define $A$ to be the associative  $K$-algebra of all $K$-linear maps $\mathfrak{g} \to \mathfrak{g}$.
Let $T(\mathfrak{g})$ be the $K$-subalgebra of $A$ generated by all  the maps
$a_L\colon x \mapsto [a,x]$ and $a_R\colon x \mapsto [x,a]$ for $a\in \mathfrak{g}$. Then $F$ is the centraliser of $T(\mathfrak{g})$ in $A$.  Since $A$ has finite vector space dimension over $K$, it follows that $F$ is finite dimensional over $K$, so is a finite degree extension of $K$, of degree $d$ say. 
Observe that 
$\ldim_K(\mathfrak{g})=d\cdot\ldim_F(\mathfrak{g})$, where $\ldim_K$ and $\ldim_F$ denote respectively vector space dimension over $K$ and $F$. We shall write $\Aut_K(\mathfrak{g})$ for the group of automorphisms of $\mathfrak{g}$ viewed as a Lie algebra over $K$, and $\Aut_F(\mathfrak{g})$ for the group of automorphisms of $\mathfrak{g}$ viewed as a Lie algebra over $F$.

By Theorem 5 of Ch.~X of Jacobson \cite{jacobson}, the group $\Aut_K(\mathfrak{g})$ is semilinear over $F$, that is, if $g\in \Aut_K(\mathfrak{g})$ then there is $\sigma \in \Aut(F/K)$ such that $g(ax)=a^\sigma g(x)$ for $a\in F$ and $x\in \mathfrak{g}$. Since $\Aut(F/K)$ is finite, $\Aut_F(\mathfrak{g})$ is a normal subgroup of $\Aut_K(\mathfrak{g})$ of finite index.
Let $G'$ be the group $\Aut_F(\mathfrak{g})$, considered as a subgroup of $\GL_r(K)$ which is definable in our structure $K$.
Since $G$ is definably almost simple, it follows that $G \leq G'$, and since $G'$ is Zariski closed in $\GL_r(K)$, to obtain $\dim(G) = \dim \cl_K(G)$, it remains to prove that $\dim G' \le r$.

Let $m$ be the vector space dimension of $\mathfrak{g}$ over $F$, so $md=r=\dim(G)$.
Using any identification of the $F$-vector space $\mathfrak{g}$ with $F^m$, we can consider $G'=\Aut_F(\mathfrak{g})$ as a subgroup of $\GL_m(F)$. As such, it is the set $\mathbf H(F)$ of $F$-rational points of an $F$-group $\mathbf H \subset \GL_m$. Let $e := \zardim \mathbf H$ be its Zariski dimension. Then $\dim G' \le de$. Indeed, by Noether normalization, there exists a finite-to-one $F$-morphism $\mathbf H \to (K^{\alg})^e$. This induces a finite-to-one map from $G' = \mathbf H(F)$ to $F^e$. Composing this with a $K$-linear map $F \to K^d$ yields a finite-to-one map $G' \to K^{d e}$ which is definable in the structure $K$. This implies the inequality $\dim(G') \le de$, as claimed. Thus, to obtain $\dim G' \le r$, it remains to prove that $m = e$.

We now pass to $K^{\alg}$, as follows:
Since $\mathfrak{g}\otimes_F F'$ is a simple Lie algebra over $F'$ for every $F' \supset F$, also $\mathfrak{g}\otimes_F K^{\alg}$ is a simple Lie algebra over $K^{\alg}$. Note that we have $\mathbf H = \Aut_{K^{\alg}}(\mathfrak{g}\otimes_F K^{\alg})$ (since preservation of the Lie bracket of $\mathfrak{g}\otimes_F K^{\alg}$ can be verified on an $F$-basis of $\mathfrak{g}$).
The connected component $\mathbf H^\circ$ is a simple algebraic group of the same Lie type as $\mathfrak{g}\otimes_F K^{\alg}$, so we have $e = \zardim \mathbf H = \zardim \mathbf H^\circ = \ldim_{K^{\alg}} (\mathfrak{g}\otimes_F K^{\alg}) = m$.
\end{proof}

Finally, we prove our main theorem, which we first restate.

\begin{theorem} \label{main2} 
\begin{enumerate}
\item[(i)] Let $(K,v,\ldots)$ be a 1-$h$-minimal  expansion of a henselian valued field of characteristic 0.
Let $G$ be a non-abelian group definable  in the structure $(K,v,\ldots)$ with universe a definable subset of $K^n$ for some $n$, and suppose that $G$ is definably almost simple.
Then there is a semisimple, almost $K$-simple and $K$-isotropic linear algebraic $K$-group $\mathbf{H}$ and a group $G^*$ definably isomorphic to $G/Z(G)$, 
 such that 
$\mathbf{H}(K)^+\leq G^*\leq \mathbf H(K)$. 
\item[(ii)] Let $(K,v)$ be an algebraically closed valued field of characteristic $p>0$, and let $G\leq \SL_n(K)$
be a definable non-abelian group which is definably almost simple. Then there is a semisimple, almost $K$-simple and $K$-isotropic linear algebraic $K$-group $\mathbf{H}$ such that
$G/Z(G)$ is definably isomorphic to $\mathbf{H}(K)$.

\end{enumerate}
\end{theorem}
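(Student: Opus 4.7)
The plan is to realise the required algebraic group $\mathbf{H}$ as the Zariski closure in $\SL_n(K^{\alg})$ of (a linear image of) $G/Z(G)$, verify its four algebraic properties by combining dimension arguments with definable almost simplicity, and then read off the sandwich from Theorem~\ref{thm:prasadii} (for part~(i)) or Proposition~\ref{prop:prasadiii} (for part~(ii)).

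First, I reduce to a linear situation in which topological and Zariski dimensions agree. For part~(i), I apply Proposition~\ref{jacobson}: the adjoint representation gives a definable homomorphism $\rho\colon G\to\SL_r(K)$ with kernel $Z(G)$ such that $G_0:=\rho(G)$ satisfies $\dim G_0=\dim\cl_K(G_0)=r$. For part~(ii), no such preparation is needed, because $G\leq\SL_n(K)$ is $L_{\divv}$-definable in $\ACVF_p$, so Lemma~\ref{lou}(ii) already gives $\dim G=\dim\cl_K(G)$; here I set $G_0:=G$. In both settings let $\mathbf{H}:=\cl_{K^{\alg}}(G_0)$, which by Remark~\ref{note-on-dim} is a $K$-closed subgroup of $\SL_n$, and note that $\zardim\mathbf{H}=\dim G_0=\dim\mathbf{H}(K)$ (the last equality again from Lemma~\ref{lou}(ii), applied to the $L_{\divv}$-definable set $\mathbf{H}(K)$). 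Also, $G_0$ is itself definably almost simple and non-abelian (in case~(i), definably simple), and in particular not solvable, since $G_0/Z(G_0)$ is a non-abelian simple group by Remark~\ref{rem:simple}.

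Second, I check that $\mathbf{H}$ is connected, semisimple, almost $K$-simple, and $K$-isotropic, by one uniform device. For any proper closed $K$-subgroup $\mathbf{N}$ normalised by $G_0$, the intersection $G_0\cap\mathbf{N}(K)$ is a definable normal subgroup of $G_0$, hence by definable almost simplicity either equals $G_0$ or is finite. The ``equals'' case forces $\mathbf{H}\subseteq\mathbf{N}$; the ``finite'' case yields, via the quotient map $\mathbf{H}\to\mathbf{H}/\mathbf{N}$, a finite-to-one definable map $G_0\to(\mathbf{H}/\mathbf{N})(K)$, whence $\dim G_0\leq\zardim(\mathbf{H}/\mathbf{N})$, and combined with $\dim G_0=\zardim\mathbf{H}$ this forces $\zardim\mathbf{N}=0$. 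Applying this template to $\mathbf{N}=\mathbf{H}^\circ$ gives connectedness (the finite case would make $G_0$ finite); to $\mathbf{N}=R(\mathbf{H})$ it gives semisimplicity (the ``equal'' case would make $G_0$ solvable, and the ``finite'' case makes the connected group $R(\mathbf{H})$ trivial); and to any connected proper normal $K$-subgroup it gives almost $K$-simplicity. Finally, by Proposition~\ref{prop:main} the group $G_0$ is unbounded, so $\mathbf{H}(K)\supseteq G_0$ is unbounded, so $\mathbf{H}$ is $K$-isotropic by Theorem~\ref{thm:prasadi}.

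Third, I conclude. Since $\dim G_0=\dim\mathbf{H}(K)$, Lemma~\ref{interior} gives $G_0$ non-empty interior in $\mathbf{H}(K)$, and being a subgroup it is therefore open in $\mathbf{H}(K)$; it is also unbounded. In case~(i) this is exactly the hypothesis of Theorem~\ref{thm:prasadii}, yielding $\mathbf{H}(K)^+\leq G_0\leq\mathbf{H}(K)$, with $G^*:=G_0$ definably isomorphic to $G/Z(G)$. In case~(ii), Remark~\ref{rem:g+acf} gives $\mathbf{H}(K)^+=\mathbf{H}(K)$, so $G_0$ is an unbounded definable open subgroup of $\mathbf{H}(K)^+$; Proposition~\ref{prop:prasadiii} (applied contrapositively) then forces $G_0=\mathbf{H}(K)$. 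Replacing $\mathbf{H}$ by the adjoint form $\mathbf{H}/Z(\mathbf{H})$ (still semisimple, almost $K$-simple and $K$-isotropic), and using that over the algebraically closed $K$ the quotient map $\mathbf{H}(K)\to(\mathbf{H}/Z(\mathbf{H}))(K)$ is surjective with kernel $Z(\mathbf{H})(K)=Z(G)$, identifies $G/Z(G)$ definably with $(\mathbf{H}/Z(\mathbf{H}))(K)$, as required.

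The hard part has already been done elsewhere: the attachment of a Lie algebra of the right dimension (Proposition~\ref{linearityproof}), the coincidence $\dim G_0=\zardim\cl_{K^{\alg}}(G_0)$ outside the pure $L_{\divv}$ setting (Proposition~\ref{jacobson}), the impossibility of boundedness (Proposition~\ref{prop:main}), and the two Prasad-type theorems of Section~\ref{sec:bounded}. The subtle point in the synthesis above is the dimension bookkeeping of the second paragraph: the identity $\dim G_0=\zardim\mathbf{H}$ is automatic only in the $L_{\divv}$-definable context, which is why part~(i) needs the preparatory appeal to Proposition~\ref{jacobson}; without that, the dimension-drop argument ruling out proper normal subgroups of $\mathbf{H}$ would not go through in a general 1-h-minimal expansion.
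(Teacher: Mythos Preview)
Your proof is correct and follows essentially the same strategy as the paper: linearise via Proposition~\ref{jacobson} (or the hypothesis, in characteristic $p$), pass to the Zariski closure, verify the algebraic properties of $\mathbf{H}$, and invoke the Prasad-type results. One small slip: you justify ``$G_0$ not solvable'' by saying $G_0/Z(G_0)$ is a \emph{simple} group via Remark~\ref{rem:simple}, but that remark only gives \emph{definably} simple from definably almost simple. The fix is immediate within your own framework: apply your uniform device to $\mathbf{N}=[\mathbf{H},\mathbf{H}]$ (a connected closed normal $K$-subgroup) to conclude $\mathbf{H}$ is perfect, hence $R(\mathbf{H})\ne\mathbf{H}$; the abelian case is ruled out by non-abelianness of $G_0$.

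The one substantive difference from the paper is in how almost $K$-simplicity is obtained. The paper takes $\tilde{\mathbf{H}}=\cl_{K^{\alg}}(G)$, picks a maximal connected proper normal $K$-subgroup $\mathbf{N}$, and passes to the quotient $\mathbf{H}=\tilde{\mathbf{H}}/\mathbf{N}$ (which is almost $K$-simple by maximality); you instead prove directly that $\cl_{K^{\alg}}(G_0)$ is already almost $K$-simple, via your dimension-drop template. Your route is slightly more direct and in fact shows that the paper's $\mathbf{N}$ is necessarily trivial; the paper's route avoids needing the inequality $\dim(\mathbf{H}/\mathbf{N})(K)\le\zardim(\mathbf{H}/\mathbf{N})$ at that stage. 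In characteristic $p$ you also defer the quotient by the centre to the end (passing to the adjoint group), whereas the paper factors it out at the start; both orderings work.
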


\begin{proof}

 Let $r=\dim(G)$. Suppose first that $K$ has characteristic 0. After replacing $G$ by $G/Z(G)$, we may assume by Proposition~\ref{jacobson} that $G\leq \SL_r(K)$ and that   $r=\dim G = \dim \cl_K(G)$.

 In characteristic $p$, the group $G$ is assumed to be an $L_{\divv}$-definable subgroup of some $\SL_n(K)$, and then automatically by Lemma~\ref{lou}(ii) we have that $\dim G = \dim \cl_K(G)$. Using \cite[Theorem 6.8]{borel} to factor out the centre, we may suppose that $G$ is centreless. 

Thus, either way, we may assume that $G\leq \SL_n(K)$ with $\dim G = \dim \cl_K(G)$, and that $G$ is centreless.
In particular, definable almost simplicity of $G$ already implies definable simplicity (see Remark~\ref{rem:simple}).

\smallskip

\emph{Claim 1.} There is a semisimple, almost $K$-simple linear algebraic $K$-group $\mathbf{H}$ and a definable injective group homomorphism $\phi\colon G \to \mathbf H(K)$  such that for $G^* := \phi(G)$, the following holds:
the Zariski closure $\cl_K(G^*)$ (of $G^*$ in $\mathbf H(K)$) is all of $\mathbf H(K)$, and $\dim(G^*) = \dim(\mathbf{H}(K))$.

\begin{proof}
Let $\tilde{\mathbf H} := {\rm cl}_{K^{\alg}}(G)$
be the Zariski closure of $G$ in $\SL_n$.
Then
$\tilde{\mathbf H}$ is a subgroup of $\SL_n$ (for example by \cite[Ch.~I, Proposition 1.3(b)]{borell}). Since any
field automorphism $\alpha$ of $K^{\alg}$ which fixes $K$ pointwise also fixes $G$ (as its universe lies in a power of $K$), we also obtain that $\tilde{\mathbf H}$ is a $K$-group, and from that, we deduce that in particular, we have $\cl_K(G) = \tilde{\mathbf H}(K)$; see also Remark~\ref{note-on-dim}.

Clearly, $\tilde{\mathbf H}$ is connected.
Indeed, its connected component $\tilde{\mathbf H}^\circ$
is normal of finite index in $\tilde{\mathbf H}$, so 
$G\cap \tilde{\mathbf H}^\circ$ is a definable normal subgroup of $G$ of finite index,
so contains $G$ by definable  simplicity of $G$. Thus $\cl_{K^{\alg}}(G) \subset \tilde{\mathbf H}^\circ$ and hence we have equality.
 
Let $\mathbf{N}$ be a connected proper normal $K$-subgroup of $\tilde{\mathbf H}$ of maximal Zariski dimension. Then $\mathbf{N}(K)\cap G$ is a definable normal subgroup of $G$. If $\mathbf{N}(K)\cap G = G$, then $\mathbf{N}=\tilde{\mathbf H}$ contradicting properness of $\mathbf{N}$ in $\tilde{\mathbf H}$. Thus, by definable simplicity of $G$, 
$\mathbf{N}(K)\cap G=1$.

Let $\mathbf{H} = \tilde{\mathbf H}/\mathbf{N}$ be a quotient in the category of algebraic $K$-groups. Then $\mathbf{H}$ is connected and almost 
$K$-simple (by maximality of $\mathbf{N}$), and is a linear algebraic group defined over $K$, by \cite[Theorem 6.8]{borel}. 
Let
\[\rho\colon \tilde{\mathbf H} \to \mathbf{H}\] be the canonical $K$-epimorphism with $\mathbf{N}=\Ker(\rho)$.
In particular, $\rho:\tilde{\mathbf H}\to \mathbf{H}$ is surjective with kernel $\mathbf{N}$.

Let $\phi:=\rho_{|G}$, and put $G^*:=\phi(G)$. Note that $G^*$ is Zariski dense in $\mathbf{H}$, as $G$ is Zariski dense in $\tilde{\mathbf H}$.
Since $\Ker(\phi)=G\cap \mathbf{N}(K)$ is trivial, by Proposition~\ref{oldhyp}(ii) we have $\dim(G) = \dim(G^*)\leq 
\dim(\mathbf{H}(K)) \leq \dim(\tilde{\mathbf{H}}(K))$. Also, recall that we already established before Claim~1 that 
$\dim(G)=\dim \cl_K(G) = \dim \tilde{\mathbf H}(K)$. It follows that all these dimensions are equal, and in particular
$$\dim(G^*)=\dim(\mathbf{H}(K)).$$

The solvable radical $\Rr(\mathbf{H})$ of $\mathbf{H}$ is $K$-closed and connected, so (since $K$ is perfect) $\Rr(\mathbf{H})$ is defined over
$K$ (see \cite[5.1]{borell}), and hence is trivial, by almost $K$-simplicity of $\mathbf{H}$. This gives semisimplicity of $\mathbf{H}$, and hence the claim.
\qedhere \emph{(Claim 1)}
\end{proof}

Fix any $K$-group embedding of $\mathbf H$ into some $\SL_m$.
Then by Proposition~\ref{prop:main}, $G^*$ is not bounded, and as $G^*\leq \mathbf{H}(K)$, the group $\mathbf{H}(K)$ is not bounded either. Hence, by Theorem~\ref{thm:prasadi}, $\mathbf{H}$ is isotropic over $K$.

Since $\dim(G^*)=\dim(\mathbf{H}(K))$, it follows from Lemma~\ref{interior} 
 that $G^*$ has non-empty interior in 
$\mathbf H(K)$. 
Since $G^*$ is a subgroup of $\mathbf H(K)$, having non-empty interior already implies that $G^*$ is open in $\mathbf H(K)$ (pick an open subset of $G^*$ and translate it around).
As noted above, $G^*$ is not bounded. 

In the case when $(K,v,\ldots)$ is 1-$h$-minimal of characteristic 0, it follows by Theorem~\ref{thm:prasadii} that $G^*\geq \mathbf{H}(K)^+$, so we are done. In the case when $(K,v)$ is a pure algebraically closed valued field of characteristic $p$, it follows from Proposition~\ref{prop:prasadiii} that $G$ is not a proper subgroup of $\mathbf{H}(K)^+$.
Since $K$ is algebraically closed (and $\mathbf H$ is semisimple), we have $\mathbf{H}(K)^+ = \mathbf{H}(K)$ (by Remark~\ref{rem:g+acf}),
and so $G=\mathbf{H}(K)$.
This completes the proof of the theorem.
\end{proof}
Competing interests: the authors have none.

\bibliography{groups}
\bibliographystyle{alpha}

\end{document}